\def\lhdslant{\ThisStyle{\mathrel{%
  \stackinset{r}{.75pt+.15\LMpt}{t}{.1\LMpt}{\rule{.3pt}{1.1\LMex+.2ex}}{\SavedStyle\leqslant}%
}}}
\numberwithin{figure}{section}
\numberwithin{table}{section}
\theoremstyle{plain}
\newtheorem{thm}{Theorem}[section]
\crefname{thm}{Theorem}{Theorems}
\newtheorem*{prop*}{Proposition}
\newtheorem*{thm*}{Theorem}
\newtheorem{prop}[thm]{Proposition}
\crefname{prop}{Proposition}{Propositions}
\newtheorem{lem}[thm]{Lemma}
\crefname{lem}{Lemma}{Lemmata}
\crefname{cor}{Corollary}{Corollaries}
\crefname{conj}{Conjecture}{Conjectures}
\crefname{equation}{Equation}{Equations}
\theoremstyle{definition}
\newtheorem{dfn}[thm]{Definition}
\newtheorem*{dfn*}{Definition}
\theoremstyle{remark}
\newtheorem{rmk}[thm]{Remark}
\newtheoremstyle{maintheorem}{}{}{\itshape}{}{\bfseries}{}{.5em}{#1 \!\thmnote{\ #3}.}
\theoremstyle{maintheorem}
\newtheorem*{mainthm}{Theorem}
\let\c@figure\c@thm
\let\c@table\c@thm
\crefname{figure}{Figure}{Figures}
\crefname{table}{Table}{Tables}
\newcommand{\case}[1]{\smallskip \noindent \textbf{Case #1:}}
\newcommand{\Aut}{\operatorname{Aut}}
\newcommand{\id}{\operatorname{id}}
\newcommand{\im}{\operatorname{im}}
\newcommand{\supp}{\operatorname{supp}}
\newcommand{\I}{\mathrm{I}}
\newcommand{\fab}[1]{{#1}^\mathrm{fab}}
\DeclareMathOperator{\Ore}{Ore}
\newcommand{\val}[2]{{#1}_{#2}}
\newcommand{\qval}[1]{\val{#1}{Q}}
\newcommand{\typeFP}[1]{\mathtt{FP}_{#1}}
\def\C{\mathbb{C}}
\def\R{\mathbb{R}}
\def\N{\mathbb{N}}
\def\Z{\mathbb{Z}}
\def\Q{\mathbb{Q}}
\def\1{\mathbbm{1}}
\def\K{\mathbb{K}}
\def\F{\mathbb{F}}
\newcommand{\D}{\mathcal{D}}
\def\s-{\smallsetminus}
\def\into{\hookrightarrow}
\def\iff{if and only if }
\newcommand{\nov}[3]{{\widehat{#1 #2}^{#3}}}
\newcommand{\novq}[2]{{\widehat{\Q #1}^{#2}}}
\newcommand{\oR}{{(-\infty,\infty]}}
\newcounter{dawidcomments}
\author{Dawid Kielak}
\title[RFRS groups and virtual fibring]{Residually finite rationally solvable groups and virtual fibring}
\date{\today}
\begin{document}

\begin{abstract}
We show that a non-trivial finitely generated residually finite ra\-tio\-nal\-ly solv\-able (or RFRS) group  $G$ is virtually fibred, in the sense that it admits a virtual surjection to $\Z$ with a finitely generated kernel, \iff the first $L^2$-Betti number of $G$ vanishes. This generalises (and gives a new proof of) the analogous result of Ian Agol for fundamental groups of $3$-manifolds.
\end{abstract}

\maketitle
\section{Introduction}

In 2013, Ian Agol \cite{Agol2013} completed the proof of Thurston's Virtually Fibred Conjecture, which states that every closed hyperbolic $3$-manifold admits a finite covering which fibres over the circle. The final step consisted of showing that the fundamental group $G$ of a hyperbolic $3$-manifold has a finite index subgroup $H$ which embeds into a right-angled Artin group (a  RAAG). Agol in~\cite{Agol2008} showed that in this case $H$ is virtually \emph{residually finite rationally solvable} (or \emph{RFRS}); the same article contains the following result.

\begin{thm*}[Agol~{\cite{Agol2008}}]
Every compact irreducible orientable $3$-manifold with trivial Euler characteristic and non-trivial RFRS fundamental group admits a finite covering which fibres.
\end{thm*}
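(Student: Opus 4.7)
The plan is to combine Thurston's norm theory on $H^1(M;\R)$ with Gabai's sutured manifold hierarchy, using the RFRS tower to pass to a finite cover in which the given cohomology class lies in the interior of a fibred face. As a first step I would pick a non-zero class. A standard observation about RFRS groups is that non-triviality forces some term $G_i$ in the RFRS chain to have $H_1(G_i;\Q) \ne 0$: otherwise the defining condition $\ker(G_i \to H_1(G_i;\Q)) \leq G_{i+1}$ forces the chain to stabilise, contradicting trivial intersection. Passing to the corresponding finite cover of $M$, I may assume $b_1(M) > 0$ and choose a non-zero integral class $\phi \in H^1(M;\Z)$.

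Next I would invoke Thurston's theorem that fibred classes are exactly the integral points in the open cones over the \emph{fibred faces} of the unit ball of the Thurston norm on $H^1(M;\R)$. The target thus becomes to arrange, after a further cover from the RFRS chain, that a multiple of $\phi$ lies in the interior of such a fibred face. Taking a Thurston-norm minimising surface $S \subset M$ Poincar\'e dual to $\phi$ and applying Gabai's construction, decomposing $M$ along $S$ and iterating yields a taut sutured manifold hierarchy; the obstruction to $\phi$ already being fibred is exactly the presence of a non-product component $N$ (the \emph{guts}) of this hierarchy.

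The crucial step is to use the RFRS chain to kill the guts. For sufficiently large $i$ the image of $\pi_1(N)$ in $G_i$ is contained in $\ker(G_i \to H_1(G_i;\Q))$, so in the cover $\tilde M \to M$ corresponding to $G_{i+1}$ the preimage of $N$ acquires new first cohomology; these classes can be used to perturb the pullback $\tilde\phi$ across the boundary of its face into the interior of a fibred face of the Thurston norm ball of $\tilde M$. The main obstacle, and the heart of Agol's argument, is precisely this perturbation: one must verify that the newly produced classes genuinely move $\tilde\phi$ into a fibred face rather than translating it within the same face, and that finitely many iterations suffice to eliminate the guts entirely. This is where the full strength of the RFRS condition, as opposed to mere residual finiteness, enters, and where the most delicate interplay between sutured manifold theory and the algebra of the RFRS tower is required.
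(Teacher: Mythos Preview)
Your proposal sketches Agol's \emph{original} 2008 argument: Thurston norm, a norm-minimising surface dual to $\phi$, Gabai's sutured hierarchy, identification of the guts, and use of the RFRS tower to find covers in which the guts acquire new cohomology so that the pullback class can be perturbed into the interior of a fibred cone. That outline is correct (and you rightly flag that the perturbation step is the delicate core and leave it unfilled), but it is a completely different route from the one the paper takes.

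The paper's proof avoids sutured manifolds entirely. It first observes via \cite{LottLueck1995} that $\chi(M)=0$ forces all $L^2$-Betti numbers of $G=\pi_1(M)$ to vanish. It then applies the paper's main technical machinery (\cref{main thm chain cplxs}, resting on the identification $\D(G)=\K(G)$ of \cref{DG is KG}) to the cellular chain complex of the universal cover: this produces a finite-index subgroup $H\leqslant G$ and an open antipodal set $U\subseteq H^1(H;\R)$ whose closure contains $H^1(G;\R)$ and on which Novikov homology vanishes. Sikorav's theorem (\cref{Sikorav}) together with \cref{BNS fibring} and Stallings then yield fibring for every integral $\psi\in U$; the Thurston polytope enters only at the end, to conclude that any given $\phi$ is either fibred or lies on the boundary of a fibred face of $B(M')$. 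Your approach is geometric and intrinsically $3$-dimensional, depending essentially on Gabai's theory; the paper's approach is ring-theoretic and, crucially, generalises beyond $3$-manifolds to arbitrary finitely generated RFRS groups with $\beta_1^{(2)}=0$ (\cref{main statement}) --- which is the entire point of the paper.
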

Since closed hyperbolic $3$-manifolds are irreducible and have vanishing Euler characteristic, it follows that such manifolds are virtually fibred, that is, admit fibring finite coverings.

Our focus here is the above result of Agol -- morally, we will generalise this theorem by removing the assumption of the group being the fundamental group of a $3$-manifold.

\begin{mainthm}[\ref{main statement}]
 Let $G$ be an infinite finitely generated group which is virtually RFRS. Then $G$ is virtually fibred, in the sense that it admits a finite-index subgroup mapping onto $\Z$ with a finitely generated kernel, \iff $\beta^{(2)}_1(G) = 0$.
\end{mainthm}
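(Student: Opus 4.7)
The plan is to handle the two implications separately. The reverse direction, that virtual fibring forces $\beta^{(2)}_1(G) = 0$, is essentially classical: if some finite-index $H \leqslant G$ fits in a short exact sequence $1 \to N \to H \to \Z \to 1$ with $N$ finitely generated, then $H$ has an infinite normal finitely generated subgroup with amenable quotient, and a standard $L^2$-vanishing result (a consequence of L\"uck's formula for extensions with amenable quotient and finitely generated kernel) gives $\beta^{(2)}_1(H) = 0$; multiplicativity under finite covers, $\beta^{(2)}_1(H) = [G:H]\cdot \beta^{(2)}_1(G)$, then yields $\beta^{(2)}_1(G) = 0$.

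For the forward direction, I may assume $G$ itself is RFRS, since both hypotheses and conclusion are well-behaved under passing to and from finite-index subgroups. The idea is to upgrade Agol's $3$-manifold argument by replacing the Thurston norm ball with a purely group-theoretic analogue. Since RFRS groups are locally indicable, the group ring $\Q G$ admits a Hughes-free embedding into a division ring $\D(\Q G)$. From this I build an $L^2$-torsion polytope $P(G) \subset H_1(G;\R)$, defined up to translation as the Newton polytope of the Dieudonn\'e determinant of the boundary map $\partial_2$ in a finite partial free resolution of the trivial $\Q G$-module $\Q$.

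The geometric crux is a \emph{fibring criterion}: a primitive integral character $\phi \in \mathrm{Hom}(G,\Z)$ has finitely generated kernel if and only if each of $\phi$ and $-\phi$ attains its maximum on $P(G)$ at a single vertex. The bridge between fibring and the polytope is Sikorav's Novikov-homology characterization of the Bieri--Neumann--Strebel invariant: finite generation of $\ker\phi$ is equivalent to vanishing of the Novikov homologies $H_1(G;\novq{G}{\phi})$ and $H_1(G;\novq{G}{-\phi})$, which in turn translates into invertibility of the image of $\partial_2$ over $\novq{G}{\pm\phi}$, and that is exactly the vertex condition on $P(G)$.

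To produce such a character, I move along a RFRS chain $G = G_0 \geqslant G_1 \geqslant \dots$ and invoke L\"uck approximation, $\beta^{(2)}_1(G) = \lim_i b_1(G_i)/[G:G_i]$. The hypothesis $\beta^{(2)}_1(G) = 0$ forces the ordinary Betti numbers to grow sub-linearly in the index; combined with the RFRS property, which ensures that each new abelianisation is seen in the next finite cover, this should imply that for $i$ large enough the polytope $P(G_i) \subset H_1(G_i;\R)$ carries a vertex whose dual cone meets $H^1(G_i;\Q)$, yielding (after clearing denominators) an integral primitive character satisfying the fibring criterion on $G_i$. The principal technical obstacle, and the conceptual heart of the argument, is the construction and Novikov-ring analysis of $P(G)$ -- in particular the verification of the fibring criterion over an arbitrary RFRS group -- which is what replaces the $3$-manifold-specific Thurston-norm geometry used by Agol.
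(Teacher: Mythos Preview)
Your reverse implication is fine and matches the paper's argument essentially verbatim: L\"uck's vanishing theorem for extensions with infinite amenable quotient gives $\beta^{(2)}_1(H)=0$, and multiplicativity under finite index transfers this to $G$.

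The forward direction, however, has genuine gaps and diverges substantially from the paper. Three problems: (i) Your polytope $P(G)$ is not well-defined as stated --- the boundary map $\partial_2$ in a partial resolution of $\Q$ is not square, so it has no Dieudonn\'e determinant, and even were one to cook up a suitable square matrix, the ``Newton polytope'' of an element of the abelianised units of a skew-field containing $\Q G$ is not obviously meaningful without further work. (ii) Your fibring criterion is asserted but not proved, and the equivalence ``$H_1(G;\novq{G}{\pm\phi})=0 \Leftrightarrow \pm\phi$ is maximised at a single vertex of $P(G)$'' is not a formal consequence of Sikorav; this is exactly the kind of statement that requires the whole machinery of the paper to establish. (iii) The L\"uck approximation step is a non sequitur: sublinear growth of $b_1(G_i)$ tells you nothing directly about the face structure of any polytope in $H_1(G_i;\R)$, and the phrase ``this should imply'' hides the entire difficulty. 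Indeed the paper makes no use of L\"uck approximation whatsoever.

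What the paper actually does is structurally different. RFRS implies the Atiyah conjecture, so $\D(G)$ is a skew-field and $\beta^{(2)}_1(G)=0$ reads $H_1(G;\D(G))=0$. One then diagonalises the differentials $\partial_1,\partial_2$ over $\D(G)$ by finitely many change-of-basis matrices $M_i$. The technical heart of the paper (Theorem~4.14, $\D(G)=\K(G)$) says that every element of $\D(G)$ --- in particular every entry of every $M_i^{\pm1}$ --- is simultaneously representable in $\novq{H_n}{\psi}$ for all $\psi$ in some open ``rich'' set $U\subseteq H^1(H_n;\R)$, for a suitable subgroup $H_n$ in the RFRS chain. The diagonalisation therefore descends to $\novq{H_n}{\psi}$ for every such $\psi$, giving $H_1(H_n;\novq{H_n}{\pm\psi})=0$; Sikorav and the BNS criterion then produce the fibred character. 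No polytope and no approximation are involved: the RFRS hypothesis is used not to control Betti-number growth but to manufacture, inductively along the chain, enough Novikov-ring representatives of elements of $\D(G)$.
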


Here, $\beta^{(2)}_1(G)$ denotes the first $L^2$-Betti number. The assumption on the vanishing of $\beta^{(2)}_1(G)$ is supposed to be thought of as the analogue of Agol's insistence on the vanishing of the Euler characteristic of the $3$-manifold. Also, it is a  natural assumption here: the connection between fibring and the vanishing of the $L^2$-Betti numbers was exhibited by Wolfgang L\"uck (see \cref{Lueck}).

Let us remark  that virtually RFRS groups abound in nature: every subgroup of a RAAG is virtually RFRS (as shown by Agol~\cite{Agol2008}*{Corollary 2.3}), and so every virtually special group (in the sense of Wise) is virtually RFRS.

\smallskip

We show that \cref{main statement} implies Agol's result; in fact, we prove a slightly stronger, uniform version, see \cref{agol uniform} -- the same uniform version can in fact be deduced directly from Agol's theorem, see \cite{FriedlVidussi2015}*{Corollary 5.2}.

\subsection*{The method}
We start by observing that it follows from a result of Thomas Schick~\cite{Schick2002} that every RFRS group $G$ satisfies the (strong) Atiyah conjecture (over $\Q$), since it is residually \{torsion-free solvable\}. The group $G$ is also torsion-free, and so Linnell's theorem~\cite{Linnell1993} gives us a skew-field $\D(G)$ which contains $\Q G$. Moreover, the $\D(G)$-dimensions of $H_\ast(G;\D(G))$ coincide with the $L^2$-Betti numbers,
and so $\beta_1^{(2)}(G) = 0$ \iff $H_1(G;\D(G)) = 0$.
Now the main technical result of the current paper is \cref{DG is KG}, which says roughly  that $\D(G)$ is covered by the Novikov rings of finite index subgroups of $G$.


Which brings us to the Novikov rings, as defined by Sikorav~\cite{Sikorav1987}: For every homomorphism $\phi \colon G \to \R$ there exists the associated Novikov ring $\novq G \phi$, which contains the group ring $\Q G$. One should think of $\novq G \phi$ as a `completion' of $\Q G$ with respect to the `point at infinity' determined by $\phi$ -- more precisely, by the ray $\{ \lambda \phi \mid \lambda \in (0,\infty) \}$.
The key point for us here is the theorem of Sikorav~\cite{Sikorav1987} (originally proved for $\Z$-coefficients; we recast it in the setting of $\Q$-coefficients in \cref{Sikorav}), which states that a character $\phi \colon G \to \Z$ is fibred, that is has a finitely generated kernel, \iff $H_1(G;\novq G \phi) = H_1(G;\novq G {-\phi})= 0$.

At this point we have made a connection between the vanishing of the first $L^2$-Betti number of $G$, equivalent to
\[H_1(G;\D(G)) = 0\]
and fibring of a finite index subgroup $H$ of $G$, equivalent to
\[H_1(H;\novq H {\pm \psi}) = 0\]
and these two homologies are related by \cref{DG is KG}.

\smallskip
The one concept which we have not elucidated so far is that of RFRS groups. The definition is somewhat technical (see \cref{rfrs def}); the way RFRS groups should be thought of for the purpose  of this article is as follows: in a RFRS group $G$, every finite subset of $G$ can be separated in the abelianisation of some finite index subgroup.

Passing to a finite index subgroup is well-understood in terms of group homology; the significance of a finite subset $S$ of $G$ being separated in (the free part of) the abelianisation $\fab{G}$ lies in the following observation: if $x \in \Q G$ has support equal to $S$, then $x$ becomes invertible in $\novq G \phi$ for a `generic' $\phi \colon G \to \R$.

\smallskip
To summarise, here is a (rather rough) heuristic behind the proof of \cref{main statement}: We investigate a chain complex $C_\ast$ of free $\Q G$-modules computing the group homology of $G$. Since $G$ is a finitely generated group, the modules $C_1$ and $C_0$ are finitely generated modules. Since we only care about computing $H_1(G; ?)$, we modify $C_\ast$  by removing $C_i$ for $i \geqslant 3$ and replacing $C_2$ by a finitely generated module. Now, to compute  $H_1(G; \D(G))$ we can change the bases of $C_0, C_1$ and $C_2$ over $\D(G)$ and diagonalise the differentials. Crucially, the change of basis matrices are all finite, and hence by \cref{DG is KG} we obtain the same change of basis matrices in the Novikov ring $\novq H {\psi}$ of some finite index subgroup $H$ of $G$, for fairly generic $\psi$. This then implies the equivalence
\[
 H_1(H;\novq H {\psi}) = 0 \Leftrightarrow H_1(G;\D(G)) =0
\]
as desired.

We offer also a more general result, \cref{main thm chain cplxs},  showing the equivalence of vanishing of higher $L^2$-Betti numbers and higher Novikov homology groups $H_i(H;\novq H \phi)$.

\subsection*{The tools}

The main tools we use are the same as the ones used by the author in \cite{Kielak2018a}: twisted group rings, Linnell skew-field, Ore localisations, Malcev--Neumann completions and the Novikov homology. The flavour of the article is therefore rather ring-theoretic. All of the tools are discussed in detail in \cref{sec prelims}.

We have already discussed the Novikov homology; we also already met the skew-field of Linnell. 
The Ore localisation is a very simple tool allowing one to form fields of fractions from rings satisfying a suitable condition (known as the Ore condition). The elements of an Ore localisation are fractions of ring elements.

The Malcev--Neumann completion creates a skew-field containing $\Q G$ for groups $G$ which are biorderable -- the biordering allows us to consistently choose a way of building power series expansions for inverses of elements of $\Q G$, and thus we may use infinite series instead of fractions.

And, last but not least, twisted group rings: the slogan is that group rings are better behaved than groups with respect to short exact sequences. More specifically, start with a short exact sequence of groups
\[
 H \to G \to Q
\]
Choosing a section $s \colon Q \to G$ allows us to write every element $\sum \lambda_g g$ of $\Q G$ as \[\sum_{q \in Q} \Big(\sum_{h \in H} \lambda_{h s(q)} h\Big) s(q)\]
which clearly is an element of the group ring $(\Q H) Q$ via the identification $s(q) \mapsto q$. The caveat is that the ring multiplication in $(\Q H) Q$ is chosen in such a way that $(\Q H) Q$ is isomorphic as a ring to $\Q G$. The multiplication is given explicitly in \cref{sec tgr}; what is important here is that $Q$ acts on $\Q H$ by conjugation, and multiplication in $Q$ is given by the section $s$, and so there are correction terms appearing due to the fact that $s$ is not necessarily a homomorphism. The resulting ring is a \emph{twisted} group ring, which we will nevertheless denote by $(\Q H)Q$, since it behaves in essentially the same way as an honest group ring. Also, twisted group rings can be easily defined using other coefficients than $\Q$, which we will repeatedly use in the article.

\subsection*{Acknowledgements}
The author would like to thank: Peter Kropholler and Daniel Wise for helpful conversations; Stefan Friedl, Gerrit Herrmann, and Andrei Jaikin-Zapirain for comments on the previous version of the article; the referee for careful reading and the suggestion of both the statement and proof of \cref{thm cohdim 2}; the Mathematical Institute of the Polish Academy of Sciences (IMPAN) in Warsaw, where some of this work was conducted.

The author was supported by the grant KI 1853/3-1 within the Priority Programme 2026 \href{https://www.spp2026.de/}{`Geometry at Infinity'} of the German Science Foundation (DFG).

\section{Preliminaries}
\label{sec prelims}

\subsection{Generalities about rings}

Throughout the article, all rings are associative and unital, and the multiplicative unit $1$ is not allowed to coincide with the additive unit $0$ (this latter point has no bearing on the article; we state it out of principle). Ring homomorphisms preserve the units.
Also, we will use the natural convention of denoting the non-negative integers by $\N$. 

Before proceeding, let us establish some notation for ring elements.

\begin{dfn}
Let $R$ be a ring. A \emph{zero-divisor} is an element $x$ such that $xy= 0$ or $yx = 0$ for some non-zero $y \in R$.

An element $x$ is \emph{invertible} in $R$ \iff it admits both a right and a left inverse (which then necessarily coincide).
\end{dfn}

\begin{dfn}[Division closure]
Let $R$ be a subring of a ring $S$. We say that $R$ is \emph{division closed} in $S$ \iff every element in $R$ which is invertible in $S$ is already invertible in $R$.

The \emph{division closure} of $R$ is the smallest (with respect to inclusion) division-closed subring of $S$ containing $R$. It is immediate that the division closure of $R$ coincides with the intersection of all division closed subrings of $S$ containing $R$.
\end{dfn}


\subsection{Twisted group rings}
\label{sec tgr}

In this section, let $R$ be  a ring and let $G$ be a group.

\begin{dfn}[Support]
 Let $R^G$ denote the set of functions $G \to R$. Since $R$ has the structure of an abelian group, pointwise addition turns $R^G$ into an abelian group as well.

 Given an element $x \in R^G$, we will use both the function notation, that is we will write $x(g) \in R$ for $g \in G$, and the sum notation (standard in the context of group rings), that is we will write
 \[
  x = \sum_{g \in G} x(g) g
 \]
 When we use the sum notation, we do not insist on the sum being finite.

 Given $x \in R^G$, we define its \emph{support} to be
 \[
  \supp x = \{ g \in G \mid x(g) \neq 0 \}
 \]

 We define $RG$ to be the subgroup of $R^G$ consisting of all elements of finite support.
\end{dfn}

For every $H\leqslant G$, we identify $R^H$ with $R^G$ by declaring functions to be zero outside on $G \s- H$. In particular, this implies that $R H \subseteq R G$ (in fact, it is an abelian subgroup).

Throughout the paper, we will only make three identifications of sets; this was the first one, the second one is discussed in \cref{sec Ore}, and the third one in the proof of \cref{DH subfield}.

\smallskip
We will now endow $RG$ with a multiplication, turning it into a ring.

\begin{dfn}[Twisted group ring]
\label{twisted dfn}
Let $R^\times$ denote the group of units of $R$.
Let $\nu \colon G \to \Aut(R)$ and $\mu \colon G \times G \to R^\times$ be two functions satisfying
\begin{align*}
 \nu(g) \circ \nu(g') &= c\big(\mu(g,g')\big) \circ \nu(gg') \\
 \mu(g,g') \cdot \mu(gg',g'') &= \nu(g)\big(\mu(g',g'')\big) \cdot \mu(g,g'g'')
\end{align*}
where $c \colon R^\times \to \Aut(R)$ takes $r$ to the left conjugation by $r$. The functions $\nu$ and $\mu$ are the \emph{structure functions}, and turn $RG$ into a \emph{twisted group ring} by setting
\begin{equation}
\tag{$\star$}
\label{twisted conv}
r g \cdot r' g' = r \nu(g)(r') \mu(g,g') gg'
\end{equation}
and extending to sums by linearity (here we are using the sum notation).

When $\nu$ and $\mu$ are trivial, we say that $RG$ is \emph{untwisted} (in this case we obtain the usual group ring).
We adopt the convention that a group ring with $\Z$ or  $\Q$ coefficients is always untwisted.
\end{dfn}

We will suppress $\nu$ from the notation, and instead write $r^g$ for $\nu(g^{-1})(r)$.

\begin{rmk}
When $H \leqslant G$ is a subgroup, restricting the structure functions to, respectively, $H$ and $H \times H$ allows us to construct a twisted group ring $R H$. Observe that, as abelian groups, $RH \leqslant RG$ as $H$ is a subset of $G$, and thus the structure functions
turn $R H$ into a subring of $R G$.

It is also easy to see that when $H \lhdslant G$, then $R H$ is invariant under conjugation by elements of $G$.
\end{rmk}

\smallskip
The key source of twisted group rings is the following construction.

\begin{prop}
\label{twisted construction}
Let $Q$ be a quotient of $G$ with associated kernel $H$. Let $s \colon Q \to G$ be a set-theoretic section of the quotient map.
Let $R G$ be a twisted group ring.
 The structure functions $\nu \colon Q \to \Aut(R H)$ and $\mu \colon Q \times Q \to (R H)^\times$ given by, respectively,
 \[
  \nu(q) = c(s(q))
 \]
and
\[
 \mu(q,p) = s(q)s(p)s(qp)^{-1}
\]
satisfy the requirements of \cref{twisted dfn} and turn $(R H)Q$ into a twisted group ring.
\end{prop}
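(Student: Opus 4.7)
The plan is to verify the two axioms of \cref{twisted dfn} by direct computation, after first confirming that $\mu(q,p)$ genuinely lies in $(RH)^\times$.

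First I would check that $\mu(q,p) \in (RH)^\times$. Since $s(q)s(p)$ and $s(qp)$ have the same image in $Q$, the group-theoretic product $s(q)s(p)s(qp)^{-1}$ lies in $H$, and is therefore an element of $RH$ under the identification $h \mapsto 1 \cdot h$. Moreover, any $h \in H$ is a unit in the twisted ring $RH$: from the multiplication rule $(\star)$ one verifies directly that $\mu_H(h, h^{-1})^{-1} \cdot h^{-1}$ is a two-sided inverse, where $\mu_H$ denotes the structure function of $RH$ inherited from $RG$.

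Next I would verify the first compatibility $\nu(q) \circ \nu(q') = c(\mu(q,q')) \circ \nu(qq')$. Both sides are automorphisms of $RH$ realised by conjugation in $RG$: indeed $\nu(q) = c(s(q))$ restricts to $\Aut(RH)$ precisely because $H$ is normal in $G$, so $s(q)$ normalises $RH$. Since $c$ is multiplicative on units of $RG$, the left-hand side equals $c(s(q) \cdot s(q'))$ while the right-hand side equals $c(\mu(q,q') \cdot s(qq'))$, where both inner products are taken in the ring $RG$. These two elements agree up to a central scalar absorbed by $c$, the scalar arising from rearranging $(\star)$ together with the defining relation $s(q)s(q') = \mu(q,q') \cdot s(qq')$ in $G$.

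Third, I would verify the cocycle identity $\mu(q,p)\mu(qp,r) = \nu(q)(\mu(p,r)) \cdot \mu(q,pr)$. Interpreted as products of group elements inside $G$, the left-hand side telescopes as $s(q)s(p)s(qp)^{-1} \cdot s(qp)s(r)s(qpr)^{-1} = s(q)s(p)s(r)s(qpr)^{-1}$, and the right-hand side does too: one has $\nu(q)(\mu(p,r)) = s(q) s(p)s(r)s(pr)^{-1} s(q)^{-1}$, which multiplied by $\mu(q,pr) = s(q)s(pr)s(qpr)^{-1}$ collapses to the same expression.

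The main obstacle will be the bookkeeping between group products in $G$ and ring products in the twisted ring $RG$, which differ by the ambient structure function $\mu_G$. A conceptually cleaner alternative I might pursue instead is to define the additive bijection $\Phi \colon (RH)Q \to RG$ by $x \cdot q \mapsto x \cdot s(q)$ (well-defined since $G = H \cdot s(Q)$ set-theoretically) and pull back the ring structure of $RG$ through $\Phi$; a short calculation of $\Phi^{-1}\bigl(\Phi(xq) \cdot \Phi(x'q')\bigr)$ recovers exactly the rule $(\star)$ with the stated $\nu$ and $\mu$, so the axioms follow automatically and one obtains the expected isomorphism $(RH)Q \cong RG$ as a bonus.
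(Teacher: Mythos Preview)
Your approach---direct verification of the two axioms---is exactly what the paper does, and your telescoping computation for the cocycle identity matches the paper's line by line. The extra check that $\mu(q,p)\in(RH)^\times$ is a reasonable addition.

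There is, however, a genuine slip in your verification of the first axiom. You write that $s(q)\cdot s(q')$ and $\mu(q,q')\cdot s(qq')$ ``agree up to a central scalar absorbed by $c$''. This is not right: the discrepancy you have in mind is a unit of $R^\times$ coming from the ambient structure function $\mu_G$, and such a unit need not be central in $RH$ (nor even in $R$), so conjugation by it need not be trivial. The paper avoids this entirely by interpreting \emph{all} products---including the defining expression $\mu(q,p)=s(q)s(p)s(qp)^{-1}$---as ring products of units in $RG$, with $s(qp)^{-1}$ denoting the ring inverse. With that convention one has $\mu(q,q')\cdot s(qq')=s(q)\cdot s(q')$ \emph{exactly} in $RG$, and both axioms follow from the multiplicativity of $c$ with no correction terms at all. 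Your instinct that the group-versus-ring bookkeeping is the main obstacle is correct; the resolution is simply to work in the ring throughout rather than to patch a group-level identity afterwards.

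Your alternative route via the bijection $\Phi\colon (RH)Q\to RG$ is perfectly sound and is in fact what the paper establishes immediately after this proposition (as the isomorphism $s\colon (RH)Q\to RG$). Pulling back the ring structure does give the axioms for free, so if you prefer that argument it is a clean substitute for the direct verification.
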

\begin{proof}
 We need to verify the two conditions of \cref{twisted dfn}. Firstly, for every $q,p \in Q$ we have
 \begin{align*}
 \nu(q) \circ \nu(p) &= c\big(s(q)\big) \circ c\big(s(p)\big) \\ &= c\big(s(q)s(p)\big) \\ &= c\big(s(q)s(p)s(qp)^{-1}s(qp)\big) \\ &= c\big(\mu(q,p)s(qp)\big) \\ &= c\big(\mu(q,p)\big) \circ \nu(qp)
 \end{align*}
Secondly, for every $p,q,r \in Q$ we have
\begin{align*}
 \mu(p,q) \cdot \mu(pq,r) &= s(p)s(q)s(pq)^{-1} \cdot s(pq)s(r)s(pqr)^{-1} \\
 &=s(p)s(q)s(r)s(pqr)^{-1} \\
  &=s(p)s(q)s(r)\cdot s(qr)^{-1}s(p)^{-1}\cdot s(p)s(qr)\cdot s(pqr)^{-1} \\
    &=s(p)s(q)s(r)s(qr)^{-1} s(p)^{-1}\cdot \mu(p,qr) \\
        &=c\big(s(p)\big)\big(s(q)s(r)\cdot s(qr)^{-1}\big) \cdot \mu(p,qr) \\
                &=\nu(p)\big(\mu(q,r) \big) \cdot \mu(p,qr) \qedhere
\end{align*}
\end{proof}

We will say that the twisted group ring $(R H) Q$ is \emph{induced} by the quotient map $G \to Q$. Note that the ring structure of $(R H) Q$ depends (a priori) on the section $s \colon Q \to G$.

\begin{rmk}
 Throughout, all sections are set-theoretic sections, and for every section $s$ we always require $s(1)=1$.
\end{rmk}

\begin{dfn}
 We extend the definition of $s \colon Q \to G$ to
 \[
  s \colon (R H) Q \to R G
 \]
by setting\[s( \sum_{q \in Q} x_q q) =\sum_{q \in Q} x_q s(q)\]
where $x_q \in R H$ for every $q \in Q$.
\end{dfn}

\begin{lem}
\label{QHQ is QG}
 The map $s$ is a ring isomorphism
 \[s \colon (R H) Q \overset{\cong}\to R G\]
\end{lem}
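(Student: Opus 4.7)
The plan is to verify in turn that $s$ is additive, unital, multiplicative, and bijective, with only the multiplicativity check requiring real work. Additivity is immediate from the defining formula, and unitality follows from the standing convention $s(e_Q) = e_G$, which gives $s(1) = 1 \cdot s(e_Q) = 1$.

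For multiplicativity, by bilinearity it suffices to verify $s(\xi\eta) = s(\xi) s(\eta)$ on basic elements $\xi = xq$ and $\eta = yp$ with $x, y \in RH$ and $q, p \in Q$. Unwinding the twisted product in $(RH)Q$ using the structure functions supplied by \cref{twisted construction} gives
\[
\xi\eta \;=\; x \cdot \big(s(q) y s(q)^{-1}\big) \cdot \big(s(q) s(p) s(qp)^{-1}\big) \cdot qp,
\]
where the inner expressions are products in $RG$. The coefficient of $qp$ telescopes in $RG$ to $x s(q) y s(p) s(qp)^{-1} \in RH$, and applying $s$ then yields
\[
s(\xi\eta) \;=\; x s(q) y s(p) s(qp)^{-1} \cdot s(qp) \;=\; x s(q) y s(p)
\]
in $RG$. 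Separately, $s(\xi) s(\eta) = (xs(q))(ys(p)) = x s(q) y s(p)$ by associativity in $RG$, producing the same element.

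Bijectivity follows from the unique coset decomposition $G = \bigsqcup_{q \in Q} H s(q)$, which identifies $RG$ with $\bigoplus_{q \in Q} RH \cdot s(q)$ as abelian groups: an arbitrary element of $RG$ decomposes uniquely according to cosets, and on each coset $Hs(q)$ one recovers $x_q \in RH$ from the raw coefficients after adjusting by the invertible twist factors $\mu(h, s(q)) \in R^\times$ coming from the ambient twisted multiplication on $RG$.

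The only real obstacle is bookkeeping: one must track that the definitions of $\nu$ and $\mu$ in \cref{twisted construction} are engineered precisely so that the telescoping in the multiplicativity check works, and be careful that the symbol $s(qp)^{-1}$ denotes the ring inverse in $RG$ (so that $s(qp)^{-1} \cdot s(qp) = 1$ holds), rather than a bare group-element basis vector carrying a further twist.
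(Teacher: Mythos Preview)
Your proof is correct and follows essentially the same approach as the paper's: verify bijectivity via the coset decomposition and check the ring homomorphism property. The paper's own proof is much terser, simply asserting that multiplicativity ``follows directly from the definition of the twisted multiplication \eqref{twisted conv} in $(RH)Q$,'' whereas you have carefully unwound the telescoping computation and correctly flagged the subtlety about ring inverses versus group inverses in the twisted setting.
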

\begin{proof}
 The map is injective since $x_q s(q)$ and $x_p s(p)$ for $p \neq q$ are supported on distinct cosets of $H$ in $G$. The map is surjective since the support of every $x \in R G$ can easily be written as a disjoint union of finite subsets of these cosets.

 The fact that the map is a ring homomorphism follows directly from the definition of the twisted multiplication \eqref{twisted conv} in $(R H) Q$.
\end{proof}

The definition of the twisted group ring structure on $(R H) Q $ depends on the choice of $s$. But the above result shows that for two different sections, the twisted group rings $(R H) Q$ are isomorphic, and the isomorphism restricts to the identity on the coefficients $R H$.

\subsection{Ore condition}
\label{sec Ore}

\begin{dfn}
 Let $R$ be a ring, and let $T$ denote the set of its elements which are not zero-divisors. We say that $R$ satisfies the \emph{Ore condition} \iff for every $r \in R$ and $s \in T$ there exist $p,p' \in R$ and $q,q' \in T$ such that
 \[
  qr = ps \textrm{ and } rq' = s p'
 \]

We call $R$ an \emph{Ore domain} \iff $T = R \s- \{0\}$ and $R$ satisfies the Ore condition.
\end{dfn}

It is a classical fact that every ring satisfying the Ore condition embeds into its \emph{Ore localisation} $\mathrm{Ore}(R)$.
Elements of such an Ore localisation are expressions of the form $r/s$ (\emph{right fractions}) where $r \in R$ and $s \in T$; formally, they are equivalence classes of pairs $(r,s)$, with the equivalence relation given by multiplying both elements on the right by a non-zero-divisor.

Equivalently, the Ore localisation consists of \emph{left fractions} $s\backslash r$, and the Ore condition allows one to pass from one description to the other. This is needed in order to define multiplication in $\mathrm{Ore}(R)$.
A detailed discussion of the Ore localisation is given in Passman's book~\cite{Passman1985}*{Section 4.4}.

Since the map $r \mapsto r/1$ is an embedding, we will identify $R$ with its image in $\Ore(R)$. This is the second identification of sets, to which we alluded in \cref{sec tgr}.

When $R$ is an Ore domain, $\mathrm{Ore}(R)$ is a skew-field, sometimes called the
\emph{(classical) skew-field of fractions} of $R$. It is immediate that if $\F$ is a skew-field, then $\F = \Ore(\F)$, as every fraction $r/s$ is equal to $rs^{-1}$.

\smallskip
Let us state a useful (and immediate) fact about $\mathrm {Ore}(R)$.

\begin{prop}
\label{Ore facts}
Let $R$ and $S$ be rings satisfying the Ore condition.
Let
\[\rho \colon R \to S\]
be a ring homomorphism, and suppose that non-zero-divisors in $R$ are taken by $\rho$ to non-zero-divisors in $S$. Then
\begin{align*}
 \Ore(\rho) \colon \Ore(R) &\to \Ore(S) \\ r/s &\mapsto \rho(r)/\rho(s)
\end{align*}
is an injective ring homomorphism (which clearly extends $\rho$).
%
\end{prop}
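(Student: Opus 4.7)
The plan is to verify three things: that $\Ore(\rho)$ is well-defined on equivalence classes of right fractions, that it respects addition and multiplication, and that it is injective. For well-definedness, I would use the explicit description of the equivalence relation: $r/s = r'/s'$ in $\Ore(R)$ iff there exist $u, u' \in R$ such that $su = s'u'$ is a non-zero-divisor and $ru = r'u'$. Applying the ring map $\rho$ to both identities produces $\rho(s)\rho(u) = \rho(s')\rho(u')$ and $\rho(r)\rho(u) = \rho(r')\rho(u')$, and $\rho(su)$ is a non-zero-divisor in $S$ by the standing hypothesis on $\rho$; so the same data witness $\rho(r)/\rho(s) = \rho(r')/\rho(s')$, and $\Ore(\rho)$ descends to the claimed map.

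To check that $\Ore(\rho)$ is a ring homomorphism, I would recall that addition and multiplication in an Ore localisation are produced by using the Ore condition to put the two right fractions on a common right denominator. For addition one picks $p \in R$ and a non-zero-divisor $q \in R$ with $sp = s'q$, giving $r/s + r'/s' = (rp + r'q)/(s'q)$; for multiplication one proceeds analogously, rewriting a product of two right fractions as a single right fraction via the Ore condition. Since $\rho$ is a ring homomorphism sending non-zero-divisors to non-zero-divisors, applying it term by term to these recipes on the $R$ side reproduces the corresponding recipes on the $S$ side, which is precisely the statement that $\Ore(\rho)$ preserves addition and multiplication.

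For injectivity, I would argue that $\Ore(\rho)(r/s) = 0$ means $\rho(r)/\rho(s) = 0$ in $\Ore(S)$; right multiplication by $\rho(s)/1$ reduces this to $\rho(r)/1 = 0$, and since $S$ embeds in $\Ore(S)$ this gives $\rho(r) = 0$. The remaining task is to conclude $r = 0$, and this is the main -- and really only -- obstacle: for rings satisfying the Ore condition in the widest sense the conclusion can fail (for instance the projection $\Z/4\Z \to \Z/2\Z$ preserves non-zero-divisors yet is not injective). In the setting actually used in this paper, however, $R$ will be an Ore domain, so every non-zero element of $R$ is a non-zero-divisor; the hypothesis on $\rho$ then sends every non-zero element of $R$ to a non-zero element of $S$, so $\rho$ is injective, $\rho(r) = 0$ forces $r = 0$, and hence $r/s = 0$ in $\Ore(R)$, as required.
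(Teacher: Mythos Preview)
The paper does not prove this proposition; it is introduced as ``a useful (and immediate) fact'' and left without argument. Your write-up therefore goes well beyond what the paper provides, and the well-definedness and homomorphism portions are standard and correct.

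You are also right that the injectivity claim is false in the generality stated: your counterexample $\Z/4\Z \to \Z/2\Z$ is valid, since the non-zero-divisors in $\Z/4\Z$ are exactly the units $1,3$, both sent to $1$, yet the map has non-trivial kernel and both Ore localisations coincide with the rings themselves. Your proposed repair, however, is slightly off. You assert that in the paper's applications $R$ is always an Ore domain, but this is not so: in the sketch proof of Proposition~\ref{DH subfield} the proposition is applied with $R = \mathcal N(H)$, which is emphatically not a domain. What saves that application is that $\rho$ there is the inclusion $\mathcal N(H) \hookrightarrow \mathcal N(G)$, hence injective on the nose; your argument then finishes, since $\rho(r)=0$ gives $r=0$ directly. (In the other invocation, producing $\Ore(\id_\leqslant)$, the source is a skew-field and the paper argues injectivity separately anyway.) So the correct hypothesis to add is that $\rho$ itself is injective, which is strictly weaker than $R$ being a domain together with the non-zero-divisor condition; with that adjustment your argument is complete.
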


We are interested in Ore localisations for the following reason.
\begin{thm}[Tamari~\cite{Tamari1957}]
\label{amen is Ore}
Let $RG$ be a twisted group ring of an amenable group $G$. If $RG$ is a domain, then it satisfies the Ore condition, and hence $\Ore(RG)$ is a skew-field.
\end{thm}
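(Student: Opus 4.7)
The plan is to verify the Ore condition directly via a Følner-set argument; once this is in place, $\Ore(RG)$ is automatically a skew-field, because in a domain every non-zero fraction $r/s$ has the two-sided inverse $s/r$.

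Since $RG$ is a domain, the set $T$ of non-zero-divisors coincides with $RG\setminus\{0\}$, so for the right half of the condition I need, given $r\in RG$ and $s\in RG\setminus\{0\}$, to produce a non-zero $q$ and some $p$ with $qr=ps$. The case $r=0$ is trivial ($q$ arbitrary non-zero, $p=0$), so assume $r\neq 0$ as well. Set $A:=\supp r\cup\supp s\cup\{1\}$, a finite subset of $G$, and write $R\langle X\rangle$ for the left $R$-submodule of $RG$ consisting of elements whose support lies in a finite set $X\subseteq G$. Amenability of $G$ supplies, for any $\epsilon>0$, a finite Følner set $F\subseteq G$ with $|FA|/|F|<1+\epsilon$; taking $\epsilon<1$ ensures
\[|FA|<2|F|.\]

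I would then consider the left-$R$-linear map
\[\phi\colon R\langle F\rangle\oplus R\langle F\rangle\longrightarrow R\langle FA\rangle,\qquad \phi(u,v):=ur-vs.\]
The twisted multiplication formula immediately shows that $ur,\,vs\in R\langle FA\rangle$, so $\phi$ is well-defined, and $R$-linearity on the left is just associativity in $RG$. In the intended applications the coefficient ring $R$ is a skew-field, so the source is a free left $R$-module of rank $2|F|$ strictly exceeding the target rank $|FA|$; hence $\ker\phi\neq 0$. Any $(u,v)\in\ker\phi\setminus\{0\}$ yields $ur=vs$, and the domain hypothesis rules out both $u=0$ (which forces $vs=0$ and hence $v=0$) and $v=0$ (which forces $ur=0$ and hence $r=0$, contrary to our assumption). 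So $u,v$ are both non-zero and we have our right Ore relation. The left half of the Ore condition follows from the symmetric argument using $AF$ in place of $FA$ and the map $(u,v)\mapsto ru-sv$.

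The main subtle point is the rank comparison used to force $\ker\phi\neq 0$: it rests on the base ring $R$ having a well-behaved rank function on free modules, for which $R$ being a skew-field is more than enough. This suffices for every use of the theorem later in the paper; handling a completely general base ring $R$ would require an additional hypothesis (for instance a faithful Sylvester rank function on $R$), but that refinement is tangential to the main idea.
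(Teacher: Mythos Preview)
The paper does not supply a proof of this theorem; it merely states the result and cites Tamari. Your F\o{}lner-set argument is the standard modern proof, and it is correct under the additional hypothesis that $R$ is a skew-field (or, more generally, that $R$ satisfies the strong rank condition: there is no injective left-$R$-linear map $R^n\hookrightarrow R^m$ with $n>m$). You are right to flag this restriction, and in fact the statement as printed is too strong for arbitrary $R$: taking $G=\{1\}$ (trivially amenable, with trivial twisting) and $R=k\langle x,y\rangle$ a free algebra on two generators produces a twisted group ring $RG\cong R$ that is a domain but fails the Ore condition. In the paper the base ring is always $\Q$ or a Linnell skew-field $\D(K)$, so your restricted argument already covers every situation in which the result is invoked.
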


In fact, Tamari's theorem admits a partial converse: if $\Q G$ is an Ore domain then $G$ is amenable; see the appendix of \cite{Bartholdi2016}.

\subsection{Atiyah conjecture}

Let $G$ be a countable group.

\begin{dfn}
The \emph{von Neumann algebra} $\mathcal N(G)$ of $G$ is defined to be the algebra of bounded $G$-equivariant operators on $L^2(G)$.
\end{dfn}

Above, we view the operators as acting on the left, and the $G$ action is the action on the right.
Observe that a bounded (that is, continuous) $G$-equivariant operator $\zeta$ on $L^2(G)$ is uniquely determined by $\zeta(1) \in L^2(G)$. This allows us to view $\mathcal N (G)$ as a subset of $L^2(G)$, which in turn is a subset of $\C^G$. We will adopt this point of view, since it is convenient for us to work in $\C^G$.

The group ring $\Q G$ acts on $L^2(G)$ on the left by bounded operators, commuting with the right $G$-action. Thus, we have a map $\Q G \to \mathcal N(G)$. This map is easily seen to be injective. With the point of view adopted above we can in fact say much more: this map is the identity map $\Q G \leqslant \Q^G < \C^G$. Thus, we have $\Q G$ being an actual subset of $\mathcal N (G)$.

We need the von Neumann algebra only to define Linnell's ring $\D(G)$ (which we shall do in a moment), and so readers not familiar with von Neumann algebras should not feel discouraged.

\begin{thm}[\cite{Lueck2002}*{Theorem 8.22(1)}]
 The von Neumann algebra $\mathcal N(G)$ satisfies the Ore condition.
\end{thm}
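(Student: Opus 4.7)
The plan is to enlarge $\mathcal{N}(G)$ to a $*$-algebra $\mathcal{U}(G)$ of unbounded operators \emph{affiliated} to $\mathcal{N}(G)$, inside which every non-zero-divisor of $\mathcal{N}(G)$ is automatically invertible, and then to use the Borel functional calculus to show that any element of $\mathcal{U}(G)$ can be brought back into $\mathcal{N}(G)$ by multiplying, on an appropriate side, by a non-zero-divisor.

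First, I would record that $\mathcal{N}(G)$ is a \emph{finite} von Neumann algebra: the functional $\tau(a) = \langle a(1), 1\rangle_{L^2(G)}$ is a faithful, normal, finite, positive trace. I would then introduce $\mathcal{U}(G)$, the collection of closed, densely defined operators on $L^2(G)$ that commute with the right $G$-action, and equip it with sum and product defined as the closure of the naive pointwise operations on the intersection of the relevant dense domains. The technical point is to verify that these closures are again closed, densely defined and affiliated, so that $\mathcal{U}(G)$ becomes a $*$-algebra containing $\mathcal{N}(G)$ as a $*$-subalgebra; on the way one also obtains a Borel functional calculus for unbounded self-adjoint affiliated operators. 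A non-zero-divisor $s \in \mathcal{N}(G)$ must have $\ker s = 0$ (else the projection onto $\ker s$ would be a non-zero element of $\mathcal{N}(G)$ annihilated by $s$); finiteness then also forces $\ker s^* = 0$, so $s$ is a closed bijection on $L^2(G)$ and its closed inverse $s^{-1}$ lies in $\mathcal{U}(G)$.

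For the Ore condition itself, given $r, s \in \mathcal{N}(G)$ with $s$ a non-zero-divisor, I form $y = s^{-1} r \in \mathcal{U}(G)$ and set $q' = (1 + y^*y)^{-1/2}$. Applying the functional calculus to the positive self-adjoint affiliated operator $y^*y$ shows that $q'$ lies in $\mathcal{N}(G)$, is positive and injective (hence a non-zero-divisor by finiteness), and that $yq'$ is in $\mathcal{N}(G)$ because the function $t \mapsto t(1+t^2)^{-1/2}$ is bounded by $1$ on $[0, \infty)$. Writing $p' := yq' = s^{-1}rq'$ then rearranges to $rq' = sp'$, which is one half of the Ore condition; applying the same argument to $z = rs^{-1}$ with $q = (1+zz^*)^{-1/2}$ on the left produces the other half, $qr = ps$.

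I expect the main obstacle to be the construction of $\mathcal{U}(G)$ as a $*$-algebra: the domains of different affiliated operators disagree a priori, and one needs the finiteness of $\mathcal{N}(G)$, via the faithful finite trace, to guarantee that common dense domains built from spectral projections exist and behave well under closure. Once this machinery is in place, everything else is a rather mechanical application of the spectral theorem.
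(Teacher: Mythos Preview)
The paper does not supply its own proof of this statement; it is quoted verbatim as \cite{Lueck2002}*{Theorem 8.22(1)} and used as a black box. Your sketch is correct and is precisely the argument carried out in L\"uck's book at the cited location: one passes to the algebra $\mathcal{U}(G)$ of operators affiliated to the finite von Neumann algebra $\mathcal{N}(G)$, observes that non-zero-divisors become invertible there, and then uses the bounded Borel function $t \mapsto (1+t^2)^{-1/2}$ applied to $|y|$ (respectively $|z^*|$) to produce the required non-zero-divisor $q'$ (respectively $q$) pulling a given $y \in \mathcal{U}(G)$ back into $\mathcal{N}(G)$. Your identification of the one genuinely delicate point---that $\mathcal{U}(G)$ is closed under the algebraic operations, which relies on finiteness of the trace---is also accurate; this is exactly where the work lies in L\"uck's treatment.
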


Note that $\mathcal N(G)$ is extremely far from being a domain.

\begin{dfn}[Linnell ring]
\label{def D}
 Let $G$ be a group. We define $\D(G)$ (the \emph{Linnell ring}) to be the division closure of $\Q G$ inside $\Ore(\mathcal N(G))$.
\end{dfn}

It is important to note that left multiplication by $x$ for $x \in \Q G$ endows $\D(G)$ with a $\Q G$-module structure.

It is also important to observe that in L\"uck's book, $\D(G)$ denotes the division closure of $\C G$ rather than $\Q G$; this is a minor point, as all the desired properties are satisfied by our $\D(G)$ as well.


\begin{dfn}[Atiyah conjecture]
 We say that a torsion-free group $G$ satisfies the \emph{Atiyah conjecture} \iff $D(G)$ is a skew-field.
\end{dfn}

\begin{rmk}
 This is not the usual definition of the Atiyah conjecture; it is however equivalent to the (strong) Atiyah conjecture (over $\Q$) for torsion-free groups by a theorem of Linnell~\cite{Linnell1993}.
\end{rmk}

The Atiyah conjecture has been established for many classes of torsion-free groups. Crucially for us, it is known for residually \{torsion-free solvable\} groups by a result of Schick~\cite{Schick2002}. It is also known for virtually \{cocompact special\} groups \cite{Schreve2014} and locally indicable groups \cite{Jaikin-ZapirainLopez-Alvarez2018}, as well as for large classes of groups with strong inheritance properties.

\begin{lem}
\label{subfield of D}
Suppose that $G$ is a torsion-free group satisfying the Atiyah conjecture. Every sub-skew-field $\K$ of $\D(G)$ containing $\Q G$ is equal to $\D(G)$.
\end{lem}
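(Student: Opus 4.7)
The plan is to use the characterisation of $\D(G)$ as the smallest division-closed subring of $\Ore(\mathcal N(G))$ containing $\Q G$. Since $\K$ is contained in $\D(G)$ by hypothesis, it suffices to establish the reverse inclusion, and by the minimality property of the division closure, the reverse inclusion follows as soon as we show that $\K$ is a division-closed subring of $\Ore(\mathcal N(G))$ containing $\Q G$.

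That $\K$ contains $\Q G$ is immediate from the assumption. To verify division-closedness in $\Ore(\mathcal N(G))$, take any $x \in \K$ that becomes invertible inside $\Ore(\mathcal N(G))$; in particular $x \neq 0$. Since $\K$ is a skew-field, $x$ has an inverse $y \in \K$. This $y$ is \emph{a fortiori} an inverse of $x$ in the larger ring $\Ore(\mathcal N(G))$, and inverses in any ring are unique, so the ambient inverse of $x$ actually lies in $\K$. This gives division-closedness.

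Combining these two observations, $\K$ is a division-closed subring of $\Ore(\mathcal N(G))$ containing $\Q G$. By the defining (minimality) property of the division closure in \cref{def D}, $\D(G) \subseteq \K$, and the reverse inclusion is given, so $\K = \D(G)$.

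The Atiyah conjecture hypothesis (that $\D(G)$ is itself a skew-field) is not explicitly invoked in the argument above; it is used only implicitly to ensure that the statement is non-vacuous — without it, $\D(G)$ may fail to be a skew-field and thus the inclusion $\K \subseteq \D(G)$ is possible precisely because $\D(G)$ is a skew-field in this setting. I do not anticipate any technical obstacle; the entire argument is a formal consequence of the definitions of sub-skew-field and of division closure.
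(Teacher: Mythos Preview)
Your proof is correct and follows essentially the same approach as the paper: show that a skew-field is automatically division closed in any overring, then invoke the minimality of $\D(G)$ among division-closed subrings containing $\Q G$. The paper's version is just more terse.
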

\begin{proof}
Every non-zero element in $\K$ is invertible in $\K$, and therefore $\K$ is division closed in $\Ore(\mathcal N (G))$. Also, $\K$ contains $\Q G$. But $\D(G)$ is the intersection of all such rings, and hence $\D(G) = \K$.
\end{proof}

\smallskip

We will now discuss the behaviour of $\D(G)$ under passing to subgroups of $G$.

\begin{prop}[{\cite{Lueck2002}*{Lemma 10.4}}]
 \label{Atiyah subgrp}
 Every subgroup of a group satisfying the Atiyah conjecture also satisfies the Atiyah conjecture.
\end{prop}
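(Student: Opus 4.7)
The plan is to embed $\D(H)$ as a division-closed subring of a sub-skew-field of $\D(G)$, and then to invoke the trivial observation that a division-closed subring of a skew-field is itself a skew-field.

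First I would produce the standard embedding of von Neumann algebras $\mathcal{N}(H) \hookrightarrow \mathcal{N}(G)$ coming from the $H$-equivariant decomposition $L^2(G) = \bigoplus_{gH} L^2(gH)$: an operator on $L^2(H)$ extends diagonally across the cosets. A short check on kernels and closures of images (or a direct appeal to the fact that the inclusion is trace-preserving between finite von Neumann algebras) shows that non-zero-divisors go to non-zero-divisors. Via \cref{Ore facts} this yields an injective ring homomorphism $\Ore(\mathcal{N}(H)) \hookrightarrow \Ore(\mathcal{N}(G))$ extending the inclusion $\Q H \hookrightarrow \Q G$. From now on I treat $\Ore(\mathcal{N}(H))$ as a subring of $\Ore(\mathcal{N}(G))$.

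Next I would introduce $\D' := \D(G) \cap \Ore(\mathcal{N}(H))$. It contains $\Q H$. I claim $\D'$ is a sub-skew-field of $\D(G)$ which is division closed in $\Ore(\mathcal{N}(H))$. Indeed, any non-zero $x \in \D'$ is non-zero in the skew-field $\D(G)$, hence a non-zero-divisor in $\Ore(\mathcal{N}(G))$; in particular $x$ is a non-zero-divisor in the subring $\Ore(\mathcal{N}(H))$. But every non-zero-divisor in an Ore localisation is invertible there (a right fraction $r/s$ with $r$ a non-zero-divisor has inverse $s/r$), so $x^{-1} \in \Ore(\mathcal{N}(H))$. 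On the other hand $x^{-1} \in \D(G)$ since $\D(G)$ is division closed in $\Ore(\mathcal{N}(G))$. Therefore $x^{-1} \in \D'$, which proves both assertions at once.

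To finish, since $\D(H)$ is by definition the smallest division-closed subring of $\Ore(\mathcal{N}(H))$ containing $\Q H$, and $\D'$ is such a subring, we have $\D(H) \subseteq \D'$. The subring $\D(H)$ is division closed in $\Ore(\mathcal{N}(H))$, hence also in the smaller ring $\D'$; as $\D'$ is a skew-field, every non-zero element of $\D(H)$ is invertible in $\D'$ and, by division closure, its inverse lies in $\D(H)$. Thus $\D(H)$ is a skew-field, so $H$ satisfies the Atiyah conjecture. The only non-formal step is the opening one: verifying that the natural inclusion $\mathcal{N}(H) \hookrightarrow \mathcal{N}(G)$ sends non-zero-divisors to non-zero-divisors; everything after that is a purely ring-theoretic manipulation with division closures.
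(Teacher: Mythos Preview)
The paper does not supply its own proof of this proposition; it simply records the statement with a citation to L\"uck. Your argument is correct and self-contained modulo the one analytic input you flag (that the inclusion $\mathcal N(H)\hookrightarrow\mathcal N(G)$ preserves non-zero-divisors). In fact, the first half of your proof essentially reproduces---and slightly strengthens---the sketch the paper gives for the subsequent \cref{DH subfield}: both build the embedding $\mathcal N(H)\leqslant\mathcal N(G)$ via the coset decomposition of $L^2(G)$, observe that non-zero-divisors are preserved, and pass to Ore localisations via \cref{Ore facts}. Your extra step, forming $\D'=\D(G)\cap\Ore(\mathcal N(H))$ and using that non-zero-divisors in a classical Ore localisation are invertible, is exactly what is needed to upgrade the inclusion $\D(H)\leqslant\D(G)$ to the statement that $\D(H)$ is a skew-field; this is the content of L\"uck's lemma that the paper is citing.
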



\begin{prop}
\label{DH subfield}
Let $G$ be a group with a subgroup $H$.
The Linnell ring $\D(H)$ is a subring of $\D(G)$. Moreover, if $H$ is a normal subgroup then $\D(H)$ is invariant under conjugation by elements of $G$.
\end{prop}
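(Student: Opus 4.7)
The plan is to embed $\mathcal N(H)$ into $\mathcal N(G)$ in a way that extends the inclusion $\Q H \hookrightarrow \Q G$, and then to identify $\D(H)$ with the division closure of $\Q H$ taken inside $\Ore(\mathcal N(G))$. To set up the embedding, fix a set $S$ of right coset representatives for $H$ in $G$ and decompose $L^2(G) = \bigoplus_{s \in S} L^2(Hs)$ as a left $\Q H$-module, with each summand canonically identified with $L^2(H)$ via $\delta_{hs} \leftrightarrow \delta_h$. Left multiplication by any $a \in \mathcal N(H)$ on $L^2(H)$ extends block-diagonally to a bounded operator $\tilde a$ on $L^2(G)$ that commutes with the right $G$-action, producing a ring embedding $\mathcal N(H) \hookrightarrow \mathcal N(G)$. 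This embedding is compatible with the natural inclusion $\Q H \subseteq \Q G$, since the action of $h \in H$ on $L^2(G)$ under the extension is just left multiplication by $h$ throughout.

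The technical heart of the proof, and the step we expect to be the main obstacle, is that this embedding preserves non-zero-divisors in both directions. For any discrete group $K$, an element $a \in \mathcal N(K)$ is a non-zero-divisor in $\mathcal N(K)$ \iff $a$ is injective as an operator on $L^2(K)$: indeed $\ker a$ is closed and $K$-invariant, so the orthogonal projection onto $\ker a$ lies in $\mathcal N(K)$ and is annihilated by $a$, while conversely any non-zero $p \in \mathcal N(K)$ with $ap = 0$ has non-trivial image contained in $\ker a$. Under the block-diagonal description, injectivity of $\tilde a$ on $L^2(G)$ is equivalent to injectivity of $a$ on each summand $L^2(Hs) \cong L^2(H)$, hence to injectivity of $a$ on $L^2(H)$.

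Given this, \cref{Ore facts} yields an injective ring homomorphism $\Ore(\mathcal N(H)) \hookrightarrow \Ore(\mathcal N(G))$, and the two-sided preservation of non-zero-divisors forces $\Ore(\mathcal N(H))$ to be division-closed in $\Ore(\mathcal N(G))$: if an element of $\mathcal N(H)$ is invertible in $\Ore(\mathcal N(G))$, then it is a non-zero-divisor there, hence in $\mathcal N(H)$, hence already invertible in $\Ore(\mathcal N(H))$. Consequently the division closure of $\Q H$ inside $\Ore(\mathcal N(G))$ coincides with its division closure inside $\Ore(\mathcal N(H))$, which is $\D(H)$. Since $\D(G)$ is a division-closed subring of $\Ore(\mathcal N(G))$ containing $\Q H$, minimality gives $\D(H) \subseteq \D(G)$.

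For the normality statement, observe that conjugation by $g \in G$ is implemented on $L^2(G)$ by the unitary operator of left multiplication by $g$, which commutes with the right $G$-action and therefore restricts to a ring automorphism of $\mathcal N(G)$. By \cref{Ore facts} this extends to an automorphism of $\Ore(\mathcal N(G))$. Normality gives $g \Q H g^{-1} = \Q H$, and since any ring automorphism preserves division closure, the automorphism preserves the division closure of $\Q H$ in $\Ore(\mathcal N(G))$, which we have just identified with $\D(H)$.
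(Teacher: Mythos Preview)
Your proof is correct and follows essentially the same route as the paper's own sketch: the block-diagonal embedding $\mathcal N(H)\hookrightarrow\mathcal N(G)$ via coset representatives, preservation of non-zero-divisors, the induced map on Ore localisations via \cref{Ore facts}, and then identification of the division closures. You are in fact more careful than the paper, which only says ``an easy argument'' for the zero-divisor step and ``it is now easy to see'' for the inclusion $\D(H)\leqslant\D(G)$; your explicit use of the injectivity characterisation and the division-closedness of $\Ore(\mathcal N(H))$ in $\Ore(\mathcal N(G))$ fills in exactly what the paper leaves implicit.
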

\begin{proof}[Sketch proof]
Since the mathematical content of this proposition is standard (see for example \cite{Kielak2018a}*{Proposition 4.6} or \cite{Lueck2002}*{Section 10.2}), we offer only a sketch proof.

Take $\zeta \in \mathcal N(H)$. We will now make $\zeta$ act on $L^2(G)$. To this end, let $P$ be a set of coset representatives of $H$ in $G$. Note that $L^2(G)$ is equal to the set of $L^2$ functions in $\C^{\bigcup_{p\in P}Hp} = \prod_{p \in P} \C^{Hp}$, and hence is a subset of $\prod_{p \in P} L^2(Hp)$. We let $\zeta$ act on each $L^2(Hp)$ by $x \mapsto \zeta(xp^{-1})p$. It is immediate that the resulting action of $\zeta$ on $\prod_{p \in P} L^2(Hp)$ preserves $L^2(G)$; it is equally clear that the action is continuous and $G$-equivariant. This procedure produces a ring homomorphism $\mathcal N(H) \to \mathcal N(G)$, which we denote $\zeta \mapsto \zeta'$.

Recall that we are viewing $\mathcal N(H)$ and $\mathcal N(G)$ as subsets of $\C^H$ and $\C^G$, respectively. Since we have $\zeta(1) = \zeta'(1)$ by construction, the map $\zeta \mapsto \zeta'$ is actually a restriction of the identity map $\C^H \leqslant \C^G$.
Therefore, we have $\mathcal N(H) \leqslant \mathcal N(G)$.

Now an easy argument (using the decomposition of $G$ into cosets of $H$) shows that non-zero-divisors of $\mathcal N(H)$ are still non-zero-divisors in $\mathcal N(G)$. \cref{Ore facts} tells us that the identity map $\id$ induces an injective map
\[                                                                                                                                                                                                                                   \Ore(\id) \colon \Ore(\mathcal N(H)) \to \Ore(\mathcal N(H))                                                                                                                                                                                                                                             \]
Formally speaking, since Ore localisation consist of equivalence classes, we are not allowed to say that $\Ore(\mathcal N(H))$ is  a subset of $ \Ore(\mathcal N(H))$. We never view elements of Ore localisations as equivalence classes however -- we view them always as fractions $r /s$, and such expressions in $\Ore(\mathcal N(H))$ are also valid expressions in $\Ore(\mathcal N(G))$. We will therefore identify (for the third and final time) the set $\Ore(\mathcal N(H))$ with the corresponding subset of $\Ore(\mathcal N(G))$.
It is now easy to see that we also have $\D(H) \leqslant \D(G)$.

\smallskip
Now suppose that $H$ is a normal subgroup, and let $g \in G$. Conjugation gives us an action of $g$ on $\C^G$, and this action clearly preserves $\C^H$. It is immediate that it also preserves $\mathcal N(H)$ and $\Q H$, and hence has to preserve $\D(H)$.
\end{proof}


\smallskip

We will now concentrate on the situation in which $H$ is a normal subgroup of $G$.
\begin{rmk}
\label{DHQ def}
 The above proposition immediately implies that  if $H$ is the kernel of an epimorphism $G \to Q$, we may form a twisted group ring $\D(H) Q$, where the structural functions are precisely as in \cref{twisted construction}. We will say that $\D(H) Q$ is \emph{induced by $G \to Q$}.
\end{rmk}

In practice, we will be interested in two cases: $Q$ will be either finite or free-abelian. Let us first focus on the case of $Q$ being finite.

\begin{lem}
\label{DHQ is DG}
\label{DHQn is DG}
If $Q$ is finite, and $G$ is torsion free and satisfies the Atiyah conjecture, then the natural map $\D(H) Q \to \D(G)$ given by\
\[
\sum_{q \in Q} \lambda_q q \mapsto \sum_{q \in Q} \lambda_q s(q)
\]
 is a ring isomorphism.
\end{lem}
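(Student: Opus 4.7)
The plan is to identify the given map — call it $\phi$ — as a ring homomorphism directly from the construction, establish surjectivity via a finite-dimensionality argument together with \cref{subfield of D}, and then handle injectivity, which I expect to be the main obstacle, via the standard description of $\mathcal{N}(G)$ as a free left module over $\mathcal{N}(H)$.

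That $\phi$ is a ring homomorphism is built into the definition of $\D(H)Q$: by \cref{DHQ def} this twisted group ring is formed using exactly the structure functions $\nu(q) = c(s(q))$ and $\mu(q,q') = s(q)s(q')s(qq')^{-1}$ coming from the extension $H \to G \to Q$. Since $H$ is normal, conjugation by $s(q)$ preserves $\D(H) \leq \D(G)$ by \cref{DH subfield}, and the identity $s(q)s(q') = \mu(q,q')s(qq')$ inside $G \subseteq \D(G)$ makes the twisted product \eqref{twisted conv} agree with the honest product in $\D(G)$. For surjectivity, set $A = \phi(\D(H)Q)$; as a left $\D(H)$-submodule of $\D(G)$ it is spanned by $\{s(q) : q \in Q\}$, hence $\dim_{\D(H)} A \leq |Q| < \infty$. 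Because $G$ satisfies the Atiyah conjecture, $\D(G)$ is a skew-field, so $A$ is a domain. A finite-dimensional domain over a skew-field is itself a skew-field, by the standard argument that left multiplication by any non-zero element is injective — hence, by dimension, bijective — so one gets two-sided inverses. Thus $A$ is a sub-skew-field of $\D(G)$ containing $\Q G$ (via \cref{QHQ is QG}), and \cref{subfield of D} forces $A = \D(G)$.

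The more delicate half is injectivity, which reduces to showing that $\{s(q) : q \in Q\}$ is left $\D(H)$-linearly independent in $\D(G)$. The key input — and the step I expect to be the chief obstacle — is the standard von Neumann algebra fact that $\mathcal{N}(G)$ is a free left $\mathcal{N}(H)$-module with basis $\{s(q) : q \in Q\}$; this can be read off from the decomposition $L^2(G) = \bigoplus_{q \in Q} L^2(Hs(q))$ together with a matrix analysis of bounded $G$-equivariant operators on $L^2(G)$ (as in L\"uck's book). Granting this, suppose $\sum_{q} \lambda_q s(q) = 0$ in $\D(G) \leq \Ore(\mathcal{N}(G))$ with $\lambda_q \in \D(H) \leq \Ore(\mathcal{N}(H))$. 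Using the Ore condition in $\mathcal{N}(H)$ I would pick a common left multiple $c \in \mathcal{N}(H)$, a non-zero-divisor, of the left denominators of the $\lambda_q$, so that $c\lambda_q \in \mathcal{N}(H)$ for every $q$. Multiplying the relation on the left by $c$ yields $\sum_{q} (c\lambda_q)\, s(q) = 0$ inside $\mathcal{N}(G)$, and the $\mathcal{N}(H)$-linear independence of $\{s(q)\}$ forces each $c\lambda_q = 0$; since $c$ is invertible in $\Ore(\mathcal{N}(H))$, each $\lambda_q = 0$, which is precisely the injectivity of $\phi$.
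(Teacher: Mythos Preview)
Your argument is correct. The paper does not actually supply a proof of this lemma; it simply says the statement ``can be proven completely analogously to \cite{Lueck2002}*{Lemma 10.59}'' and moves on. Your proposal therefore fills in details the paper omits, and the strategy you use --- show the image is a sub-skew-field containing $\Q G$ and invoke \cref{subfield of D} for surjectivity, then prove $\{s(q)\}$ is $\D(H)$-independent for injectivity by clearing denominators back to $\mathcal N(G)$ --- is essentially the standard route and in the spirit of L\"uck's treatment.

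One small wrinkle worth tightening in the surjectivity step: left multiplication $L_a \colon A \to A$ by $a \in A$ is \emph{right} $\D(H)$-linear, not left $\D(H)$-linear, so for the finite-dimensionality argument you need $A$ to have finite right $\D(H)$-dimension. This is immediate once you note that $\D(H)\,s(q) = s(q)\,\D(H)$ by the conjugation invariance of $\D(H)$ from \cref{DH subfield}, so $\{s(q)\}$ spans $A$ on the right as well; alternatively, run the argument with right multiplication, which is left $\D(H)$-linear. Either way the conclusion stands.
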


The statement above can be proven completely analogously to \cite{Lueck2002}*{Lemma 10.59}.  Note that L\"uck is only assuming $\D(G)$ to be semisimple, whereas we are working under a stronger assumption of $\D(G)$ (and hence also $\D(H)$) being skew-fields.

As before, we will refer to the isomorphism $\D(H) Q \to \D(G)$ above simply as $s$.

\smallskip
Now we move to the case of $Q$ being free abelian.


\begin{prop}
\label{DG is Ore}
Suppose that $G$ is finitely generated, torsion free, and satisfies the Atiyah conjecture. Let $s_\alpha$ denote a section of an epimorphism $\alpha \colon G \to Q$ onto a free-abelian group; let $K=\ker \alpha$. The twisted group ring $\D(K) Q$ satisfies the Ore condition, and the map
\[s_\alpha \colon \D(K) Q \to \D(G)\]
is injective and induces an isomorphism
\[\mathrm{Ore}(s_\alpha) \colon \mathrm{Ore}\big(\D(K) Q \big) \cong \D(G)\]
\end{prop}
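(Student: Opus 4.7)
The plan is to verify the claim in three stages. First I would show that $\D(K)Q$ is an Ore domain. Since $G$ is torsion-free and satisfies the Atiyah conjecture, the same holds for $K$ by \cref{Atiyah subgrp}, so $\D(K)$ is a skew-field. Because $Q$ is finitely generated free abelian, it carries a bi-invariant total order, and a standard leading-term argument---inspecting the coefficient $x(q_x)\cdot\nu(q_x)(y(q_y))\cdot\mu(q_x,q_y)$ at the minimum $q_x q_y$ of $\supp(xy)$, and noting that this is a product of non-zero elements of the skew-field $\D(K)$ with the unit $\mu(q_x,q_y) \in \D(K)^\times$---shows that $\D(K)Q$ is a domain. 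Since $Q$ is amenable, \cref{amen is Ore} then gives the Ore condition and the fact that $\Ore(\D(K)Q)$ is a skew-field.

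Next I would prove injectivity of $s_\alpha$. Suppose $\sum_{q} \lambda_q s_\alpha(q) = 0$ in $\D(G)$ with finitely many non-zero $\lambda_q \in \D(K) \subseteq \Ore(\mathcal N(K))$. Using iterated application of the left Ore condition in $\mathcal N(K)$, I find a common non-zero-divisor $b \in \mathcal N(K)$ so that each $\lambda_q = b^{-1} e_q$ with $e_q \in \mathcal N(K)$. Left-multiplying the relation by $b$ gives
\[
\sum_q e_q s_\alpha(q) = 0 \quad \text{in } \mathcal N(G).
\]
Under the identification $\mathcal N(K) \subseteq \mathcal N(G) \subseteq \C^G$ from the proof of \cref{DH subfield}, the function $e_q s_\alpha(q)$ is supported on the coset $K s_\alpha(q)$. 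As these cosets are disjoint for distinct $q$, each $e_q$ vanishes, and hence so does each $\lambda_q$.

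Finally, to see that $\Ore(s_\alpha)$ is an isomorphism, I note that every non-zero element of the skew-field $\D(G)$ is a non-zero-divisor, so \cref{Ore facts} extends $s_\alpha$ to an injective ring homomorphism $\Ore(\D(K)Q) \hookrightarrow \D(G)$. Its image $\K$ is a sub-skew-field of $\D(G)$ containing every $g \in G$: setting $k = g \cdot s_\alpha(\alpha(g))^{-1} \in K$, we have $g = s_\alpha(k \cdot \alpha(g))$, so by linearity $\Q G \subseteq \K$. \cref{subfield of D} then forces $\K = \D(G)$. I expect the main obstacle to be the injectivity step: elements of $\D(K)$ are equivalence classes of fractions rather than honest functions on $G$, so descending to $\mathcal N(K)$ via a common left denominator---which relies on the left Ore condition in $\mathcal N(K)$ together with the embedding $\mathcal N(K) \hookrightarrow \mathcal N(G)$ from the proof of \cref{DH subfield}---is essential to make the coset-support argument available.
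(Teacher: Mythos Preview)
Your argument is correct and complete. The paper's own proof is merely a sketch that defers to \cite{Lueck2002}*{Lemma 10.69}, noting that the hypotheses there (quotient virtually finitely generated abelian, $\D(K)$ semisimple) are met. You have instead reconstructed the argument directly from the tools already assembled in the paper: Tamari's theorem for the Ore property, the coset-support argument via $\mathcal N(K)\subseteq\mathcal N(G)\subseteq\C^G$ for injectivity, and \cref{subfield of D} for surjectivity. This is exactly the content of L\"uck's lemma specialised to the present situation, so the two proofs agree in substance; yours simply makes the steps explicit rather than citing them. The one place worth a word of extra care is that $s_\alpha$ is a ring homomorphism, not just a map of sets---this follows from the construction in \cref{twisted construction} and \cref{DHQ def}, but you use it implicitly when invoking \cref{Ore facts}.
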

\begin{proof}[Sketch proof]
The proof is completely analogous to that of \cite{Lueck2002}*{Lemma 10.69}, and hence we only offer an outline here.

First we note that, as before, our $\D(G)$ is not the same as L\"uck's, but it exhibits the same properties. L\"uck starts with a short exact sequence with $G$ in the middle, and the quotient being virtually finitely generated abelian. We are looking at a short exact sequence
\[
 K \to G \to Q
\]
where the quotient is finitely generated and free abelian, and so certainly satisfies the assumptions of \cite{Lueck2002}*{Lemma 10.69}. The lemma also requires $\D(K)$ to be semi-simple, but in our case we have the stronger assumption that $\D(K)$ is a skew-field, since $K$ is torsion-free and satisfies the Atiyah conjecture.
L\"uck concludes that the Ore localisation of $\D(K) Q$ exists and is isomorphic to $\D(G)$; the isomorphism produced coincides with our $\Ore(s_\alpha)$.
\end{proof}

\subsection{\texorpdfstring{$L^2$}{L\texttwosuperior}-homology}

\begin{dfn}[$L^2$-Betti numbers]
 Let $G$ be a torsion-free group satisfying the Atiyah conjecture.
Let $C_\ast$ be a chain complex of free left $\Q G$-modules.
 The \emph{$n^{th}$ $L^2$-Betti number}   of $C_\ast$, denoted $\beta_n^{(2)}(C_\ast)$, is defined to be the dimension of $H_n(\D(G) \otimes_{\Q G} C_\ast )$ viewed as a (left) $\D(G)$-module. Note that there is no ambiguity here, since dimensions of modules over skew-fields (which are essentially vector spaces) are well-defined.

 We say that $C_\ast$ is $L^2$-acyclic \iff $\beta_n^{(2)}(C_\ast) = 0$ for every $n$.

 Similarly, we define the \emph{$n^{th}$ $L^2$-Betti number}   of $G$, denoted $\beta_n^{(2)}(G)$, to be the dimension of $H_n(G;\D(G))$ viewed as a $\D(G)$-module; we say that $G$ is $L^2$-acyclic \iff $\beta_n^{(2)}(G) = 0$ for every $n$.
\end{dfn}
\begin{rmk}
 Again, this is not the usual definition of $L^2$-Betti numbers. In the above setting it coincides with the usual definition by \cite{Lueck2002}*{Lemma 10.28(3)}.
\end{rmk}

The following is a version of a celebrated theorem of L\"uck~\cite{Lueck2002}*{Theorem 1.39} (it is stated here in a slightly reworded fashion).

\begin{thm}[{\cite{Lueck2002}*{Theorem 7.2(5)}}]
 Let
 \[
  1 \to H \to G \to K \to 1
 \]
be an  exact sequence of groups such that $\beta_p^{(2)}(H)$ is finite for all $p \leqslant d$. Suppose that $K$ is infinite amenable or suppose that $BK$ has finite $d$-skeleton and there is an injective endomorphism $K \to K$ whose image is proper and of finite index in $K$. Then $\beta_p^{(2)}(G) = 0$ for every $p \leqslant d$.
\end{thm}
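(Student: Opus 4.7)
The plan is to split into the two cases furnished by the hypotheses on $K$ and, in each, to force $\beta_p^{(2)}(G) = 0$ using the Lyndon--Hochschild--Serre spectral sequence for the extension $1 \to H \to G \to K \to 1$ with coefficients in $\D(G)$. The starting observation will be that $\D(H) \leqslant \D(G)$ by \cref{DH subfield}, and since $\D(H)$ is a skew-field (by \cref{Atiyah subgrp}) it follows that $\D(G)$ is flat over $\D(H)$; consequently each $H_q(H; \D(G))$ has $\D(G)$-dimension equal to $\beta_q^{(2)}(H)$ for $q \leqslant d$, and carries a natural $K$-action. The spectral sequence will converge in $\D(G)$-dimension, so the task will reduce to showing that $\dim_{\D(G)} H_p(K; M) = 0$ for every finitely generated $\D(G)$-module $M$ with $K$-action arising from the $E^2$-page and every $p+q \leqslant d$.

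For the amenable case I would employ a F\o lner-averaging argument of Cheeger--Gromov type. Given a F\o lner sequence $(F_n) \subset K$, averaging representatives of cycles over $F_n$ produces a sequence of $K$-equivariant operators whose $\mathcal N(G)$-traces tend to zero, forcing $\dim_{\mathcal N(G)} H_p(G; \ell^2(G)^{(n)}) = 0$; the Atiyah conjecture, via \cite{Lueck2002}*{Lemma 10.28(3)}, then translates this into the vanishing of $\dim_{\D(G)}$. For the self-injective case, the map $f \colon K \to K$ with $[K : f(K)] = n \geqslant 2$ will be pulled back along the extension to produce a nested chain $G \supseteq G_1 \supseteq G_2 \supseteq \cdots$ with $[G : G_i] = n^i$; multiplicativity of $L^2$-Betti numbers under finite-index inclusions then yields $\beta_p^{(2)}(G_i) = n^i \beta_p^{(2)}(G)$ for $p \leqslant d$, while the finite $d$-skeleton assumption on $BK$ together with the finiteness of $\beta_q^{(2)}(H)$ for $q \leqslant d$ will supply a uniform upper bound $\beta_p^{(2)}(G_i) \leqslant C$ via a finite-type LHS calculation. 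Together these will force $\beta_p^{(2)}(G) = 0$ for every $p \leqslant d$.

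The hard part will be keeping every step inside the class of modules for which the $\D(G)$-dimension is well behaved and coincides with the $\mathcal N(G)$-dimension. This is the content of \cite{Lueck2002}*{Chapter 10} under the Atiyah conjecture, and any honest reworking of the proof will amount to tracking finite presentability through the pages of the spectral sequence and through the F\o lner averaging operators; for the purposes of this article I would simply invoke \cite{Lueck2002}*{Theorem 7.2(5)} directly.
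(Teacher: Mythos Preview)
The paper offers no proof of this statement: it is quoted verbatim as \cite{Lueck2002}*{Theorem 7.2(5)} and used as a black box. Your proposal ends in the same place---invoking L\"uck directly---so at the level of what the paper actually does, you agree.

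Your preliminary sketch, however, would not recover the theorem as stated. You work throughout with $\D(G)$ and $\D(H)$ as skew-fields, appealing to \cref{DH subfield} and \cref{Atiyah subgrp}; but those results presuppose that $G$ is torsion-free and satisfies the Atiyah conjecture, whereas L\"uck's Theorem 7.2(5) has no such hypothesis. In L\"uck's book the $L^2$-Betti numbers are defined via the von Neumann dimension over $\mathcal N(G)$, and the proof runs entirely in that setting (using the dimension-flatness of $\mathcal N(G)$ over $\C G$ and properties of the extended von Neumann dimension), not through any skew-field. So your spectral-sequence outline is in the right spirit, but the coefficient ring is wrong for the stated generality, and the passage ``the Atiyah conjecture, via \cite{Lueck2002}*{Lemma 10.28(3)}, then translates this'' is circular unless that conjecture is assumed. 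Since the paper only needs the special case recorded in \cref{Lueck} (where $K = \Z$), none of this matters for the application, and citing L\"uck is exactly what is required.
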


Restricting the above to the situation we will be interested in gives the following.

\begin{thm}
\label{Lueck}
If $H$ is a finitely generated group, and if
 \[
  1 \to H \to G \to \Z \to 1
 \]
is an  exact sequence of groups, then $\beta_1^{(2)}(G) = 0$.
\end{thm}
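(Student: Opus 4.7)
The plan is to deduce the statement as a direct specialisation of the preceding theorem (L\"uck's general result), applied with $K = \Z$ and $d = 1$. Two hypotheses need to be verified: that $\beta_p^{(2)}(H)$ is finite for $p \leqslant 1$, and that $K = \Z$ fits into one of the two alternatives offered by the theorem.

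For the first hypothesis, I would observe that since $H$ is finitely generated, it admits a presentation 2-complex $X$ with a single $0$-cell and finitely many $1$-cells. The cellular $L^2$-chain complex of the universal cover in degrees $0$ and $1$ is then a complex of finitely generated Hilbert $\mathcal N(H)$-modules, and hence its homology has finite von Neumann dimension. Concretely, one has $\beta_0^{(2)}(H) \leqslant 1$ and $\beta_1^{(2)}(H)$ bounded above by the number of generators of $H$, both finite.

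For the second hypothesis, note that $K = \Z$ is obviously infinite and amenable, so the first of the two alternatives in the general theorem is satisfied (and in fact $B\Z = S^1$ is a finite complex and $\Z \to \Z$, $n \mapsto 2n$ is an injective endomorphism with proper finite-index image, so the second alternative holds as well).

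Applying the general theorem then yields $\beta_p^{(2)}(G) = 0$ for all $p \leqslant 1$, and in particular $\beta_1^{(2)}(G) = 0$. There is no real obstacle here; the only substantive observation is that finite generation of $H$ alone is enough to force $\beta_0^{(2)}(H)$ and $\beta_1^{(2)}(H)$ to be finite, which is exactly what the general theorem demands at the level $d = 1$.
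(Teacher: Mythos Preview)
Your proposal is correct and follows exactly the same route as the paper: apply the preceding general theorem of L\"uck with $K = \Z$ and $d = 1$, noting that $\Z$ is infinite amenable and that finite generation of $H$ forces $\beta_0^{(2)}(H)$ and $\beta_1^{(2)}(H)$ to be finite. Your version is simply a more detailed unpacking of the paper's one-sentence proof.
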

\begin{proof}
We need only observe that $\Z$ is an infinite amenable group, that being finitely generated implies finiteness of the zeroth and first $L^2$-Betti numbers.
\end{proof}

\subsection{Biorderable groups}

\begin{dfn}
 A \emph{biordering} of a group $G$ is a total order $\leqslant$ on $G$ such that for every $a,b,c \in G$ we have
 \[
  a \leqslant b \Rightarrow ac \leqslant bc \textrm{ and } a \leqslant b \Rightarrow ca \leqslant cb
 \]

 A group $G$ is \emph{biorderable} \iff it admits a biordering.
\end{dfn}

\begin{dfn}[Malcev--Neumann skew-field]
 Let $\leqslant$ denote a biordering of a group $G$. Let $R$ be a ring, and let $RG$ be a twisted group ring. The set
 \[
 \F_\leqslant (RG) =  \{ x \colon G \to R \mid \supp x \textrm{ is $\leqslant$-well ordered}  \}
 \]
becomes a skew-field when endowed with the twisted convolution \eqref{twisted conv} -- see \cites{Malcev1948,Neumann1949}. We call it the \emph{Malcev--Neumann skew-field} of $RG$ with respect to $\leqslant$. (Recall that a totally ordered set is \emph{well-ordered} \iff every non-empty subset thereof admits a unique minimum.)
\end{dfn}

\section{Novikov rings}

\subsection{Novikov rings}

\begin{dfn}[Novikov rings]
Let $RG$ be a twisted group ring, and let
\[\phi \in H^1(G;\R) = \mathrm {Hom}(G;\R)\]
(we will say that $\phi$ is a \emph{character}).
We set
\[
 \nov R G \phi = \Big\{ x \colon G \to R \, \Big\vert \, \big\vert \supp x \cap \phi^{-1}((\infty, \kappa]) \big\vert < \infty \textrm{ for every } \kappa \in \R
 \Big\}
\]
The set $\nov R G \phi$ is turned into a ring using the twisted convolution formula \eqref{twisted conv}. We will call it the \emph{Novikov ring} of $RG$ with respect to $\phi$.
\end{dfn}

\begin{dfn}
 Let $\oR = \R \sqcup \{\infty\}$ be endowed with the obvious total ordering and an abelian monoid structure given by addition (with the obvious convention that $x+ \infty = \infty$ for every $x\in \oR$).

Let $\phi \in  H^1(G;\R)$ be given.
We extend the definition of $\phi \colon G \to \R$ to
\[\phi \colon \nov R G \phi \to \oR\]
by setting
\[
 \phi(x) = \left\{ \begin{array}{ccl} \min \phi \big(\supp(x)\big) & \textrm{if} & x \neq 0\\
                                        \infty & \textrm{if} & x = 0 \end{array} \right.
\]
In particular, this also defines  $\phi \colon R G \leqslant \nov R G \phi \to \oR$.
\end{dfn}

\begin{rmk}
\label{novikov series}
 Let $(x_i)_i$ be a sequence of elements of $\nov R G \phi$ with $(\phi(x_i))_i$ converging to $\infty$. It is easy to see that the partial sums
 \[
  \sum_{i=0}^N x_i
 \]
converge pointwise as functions $G \to R$, and hence one can easily define their limit $\sum_{i=0}^\infty x_i \in \nov R G \phi$.
\end{rmk}

\begin{lem}
\label{properties of nivikov}
 Let $R G$ be a twisted group ring, and let $\phi \in H^1(G;\R)$ be a character. For every $x,y \in \nov R G \phi$ we have
 \begin{enumerate}
 \item $\phi(x + y) \geqslant \min \{ \phi(x), \phi(y) \}$;
 \label{pon item 1}
  \item $\phi(xy) \geqslant \phi(x) + \phi(y)$;
  \label{pon item 2}
  \item $\phi(xy) = \phi(x) + \phi(y)$ if $RG$ has no zero-divisors.
  \label{pon item 3}
 \end{enumerate}
\end{lem}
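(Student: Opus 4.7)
All three items unwind directly from the behaviour of supports under addition and under the twisted convolution, together with the observation that $\phi \colon G \to \R$ is a homomorphism, so $\phi(g_1 g_2) = \phi(g_1) + \phi(g_2)$. The zero-element cases are handled uniformly by the convention $\phi(0) = \infty$ and the monoid structure on $\oR$; I will tacitly dispose of them and concentrate on the generic case.

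For (\ref{pon item 1}), the point is simply that $\supp(x + y) \subseteq \supp(x) \cup \supp(y)$, so the minimum of $\phi$ over $\supp(x+y)$ (which exists by the defining finiteness condition of $\nov R G \phi$, applied at the threshold $\kappa = \min\{\phi(x),\phi(y)\}$) is bounded below by the minimum of $\phi$ over $\supp(x) \cup \supp(y)$, which is exactly $\min\{\phi(x),\phi(y)\}$. For (\ref{pon item 2}), expand $xy$ using the twisted convolution \eqref{twisted conv}: for $h \in \supp(xy)$, the coefficient $(xy)(h)$ is a finite sum of terms coming from pairs $(g_1,g_2)$ with $g_1 \in \supp x$, $g_2 \in \supp y$ and $g_1 g_2 = h$. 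In particular at least one such factorisation exists, and then $\phi(h) = \phi(g_1) + \phi(g_2) \geqslant \phi(x) + \phi(y)$; taking the minimum over $h \in \supp(xy)$ yields (\ref{pon item 2}).

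For (\ref{pon item 3}), the strategy is to split each of $x$ and $y$ into a leading part of finite support plus a strictly $\phi$-higher remainder, and then to use the no-zero-divisor hypothesis to force the leading parts to interact non-trivially. Concretely, set $m = \phi(x)$ and $n = \phi(y)$. The Novikov condition applied with $\kappa = m$ ensures that the set $A = \supp(x) \cap \phi^{-1}(\{m\})$ is finite and non-empty, so $x_0 := \sum_{g \in A} x(g) g$ is a non-zero element of $RG$; likewise define $y_0 \in RG$ from $B = \supp(y) \cap \phi^{-1}(\{n\})$. Writing $x = x_0 + x'$ and $y = y_0 + y'$, we have $\phi(x') > m$ and $\phi(y') > n$, hence by (\ref{pon item 1}) and (\ref{pon item 2}) each of $x_0 y'$, $x' y_0$ and $x' y'$ has $\phi$-value strictly greater than $m+n$.

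The crux is now to show $\phi(x_0 y_0) = m + n$ exactly. Since $RG$ has no zero-divisors and $x_0,y_0$ are non-zero elements of $RG$, the product $x_0 y_0$ is a non-zero element of $RG$. Its support is contained in $\{g_1 g_2 : g_1 \in A,\, g_2 \in B\}$, every element of which has $\phi$-value $m + n$ because $\phi$ is a homomorphism; hence $\phi(x_0 y_0) = m + n$. Finally, in the decomposition $xy = x_0 y_0 + (x_0 y' + x' y_0 + x' y')$ the three summands in brackets only contribute elements of $\phi$-value strictly above $m+n$, so they cannot cancel the level-$(m+n)$ part of $x_0 y_0$; thus $\phi(xy) = m + n$, as required. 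The only real subtlety is confirming that the leading pieces $x_0, y_0$ land in $RG$ (i.e.\ have finite support) — this is precisely what the Novikov finiteness axiom buys us and is the step where the hypotheses are genuinely used.
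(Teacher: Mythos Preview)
Your proof is correct. Parts (1) and (2) follow the paper's argument essentially verbatim. For part (3) you take a different and in fact more careful route: the paper simply fixes single elements $a \in \supp x$ and $b \in \supp y$ realising the respective minima and asserts that $ab \in \supp(xy)$ because $RG$ is a domain. As literally written this step is incomplete --- when several support elements share the minimal $\phi$-value, the coefficient of $ab$ in $xy$ is a sum over all factorisations at that level and can vanish even in a domain (e.g.\ in $\Q[\Z^2]$ with $\phi$ the first-coordinate projection, take $x = e_2 - 1$, $y = e_2 + 1$, $a = 1$, $b = e_2$; then $ab = e_2 \notin \supp(xy) = \{1, e_2^2\}$). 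Your decomposition $x = x_0 + x'$, $y = y_0 + y'$ into the full bottom level plus a strictly higher remainder sidesteps this: you only need $x_0 y_0 \neq 0$, which the domain hypothesis gives directly, and then all of $\supp(x_0 y_0)$ sits at level exactly $m+n$ and cannot be cancelled by the higher-order terms. This is the more robust way to organise the argument.
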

\begin{proof}
\begin{enumerate}
 \item This follows immediately from the definition of $\phi \colon \nov R G \phi \to \oR$ and the fact that $\supp (x+y) \subseteq \supp (x) \cup \supp(y)$.
 \item Let $a \in \supp (x)$ and $b \in \supp (y)$ be such that
 \[
 \phi(x) = \phi(a) \textrm{ and } \phi(y) = \phi(b)
 \]
 For every $a' \in \supp (x)$ and $b' \in \supp (y)$ we have
 \[
 \phi(a') \geqslant \phi(a) \textrm{ and } \phi(b') \geqslant \phi(b)
 \]
 Now, every $c \in \supp (xy)$ can be written as a product $c = a'b'$ of some $a'$ and $b'$ as above. Thus
 \[
 \phi(xy) \geqslant \min_{a',b'} \phi(a'b') = \min_{a',b'} \big( \phi(a') + \phi(b') \big) = \phi(a) + \phi(b) = \phi(x) + \phi(y)
 \]

\item If $R G$ has no zero-divisors, we know that $ab \in \supp (xy)$, and so
\[
 \phi(xy) = \phi(a) + \phi(b) =  \phi(x) + \phi(y) \qedhere
\]
\end{enumerate}
\end{proof}

\begin{rmk}
\label{phi is a morph}
 It follows immediately from (3) above, that if $RG$ has no zero-divisors, then $\phi \colon R G \s- \{0\} \to \R$ is a monoid homomorphism.
\end{rmk}

\begin{dfn}[Free abelianisation]
Let $G$ be a group. Let $\fab G$ denote the image of $G$  in $H_1(G;\Q)$ (under the natural homomorphism).
When $G$ is finitely generated, $\fab G$ is simply the \emph{free part of the abelianisation} of $G$, that is abelianisation of $G$ divided by the torsion part.
We define $\alpha \colon G \to \fab G$ to be the natural epimorphism. We will refer to $\alpha$ as the \emph{free abelianisation map}.
\end{dfn}

\begin{rmk}
 Clearly, every character in $H^1(G;\R)$ yields a character $\fab{G} \to \R$ in the obvious way. Conversely, every character in $H^1(\fab{G};\R)$ yields a character in $H^1(G;\R)$. Thus we will not differentiate between such characters, and use the same symbol to denote both.
\end{rmk}

\begin{prop}
\label{novikov standard}
 Let $G$ be a group, and let $\phi \in H^1(G;\R)$. Choose a section $s_\alpha \colon \fab G \to G$, and recall that this choice gives $(\Q K) \fab G$ the structure of a twisted group ring, where $K = \ker \alpha$. The ring isomorphism $s_\alpha \colon (\Q K) \fab G \to \Q G$ extends to a ring isomorphism
 \[ \nov {(\Q K)} {\fab G} \phi \cong \novq G \phi\]
\end{prop}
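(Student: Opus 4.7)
The plan is to extend $s_\alpha$ from $(\Q K)\fab G$ to the respective Novikov rings by the same formula, and then check that this extension is well-defined and preserves the ring structure.

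I would first observe that every $\phi \in H^1(G;\R)$ factors through $\fab G$ because $\R$ is torsion-free abelian; consequently $\phi$ vanishes on $K = \ker\alpha$ and is constant with value $\phi(q)$ on the coset $K \cdot s_\alpha(q) \subseteq G$. I would then define
\[
 \widetilde{s_\alpha} \colon \nov{(\Q K)}{\fab G}{\phi} \to \Q^G, \qquad \sum_{q \in \fab G} y_q q \; \longmapsto \; \sum_{q \in \fab G} y_q s_\alpha(q),
\]
where each $y_q \in \Q K$ is finitely supported, so each summand is a function with finite support contained in the coset $K \cdot s_\alpha(q)$.

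The central verification is that $\widetilde{s_\alpha}$ is a bijection onto $\novq G \phi$, and this is a matter of bookkeeping. For well-definedness: given $y = \sum y_q q$ in the source and $\kappa \in \R$, only finitely many $q$ with $\phi(q) \leq \kappa$ satisfy $y_q \neq 0$ (by the Novikov condition on $y$), and each such $y_q$ has finite support, so the support of $\widetilde{s_\alpha}(y)$ below level $\kappa$ is a finite union of finite sets and hence finite. Conversely, given $x \in \novq G \phi$, for each $q \in \fab G$ define $x_q \in \Q^K$ by $x_q(k) = x(k \cdot s_\alpha(q))$. The support of $x_q \cdot s_\alpha(q)$ in $G$ lies entirely in $\phi^{-1}\big((-\infty,\phi(q)]\big)$, so it is finite by the Novikov condition on $x$; hence $x_q \in \Q K$. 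Similarly, for each $\kappa$ there are only finitely many $q$ with $\phi(q) \leq \kappa$ and $x_q \neq 0$, because the corresponding cosets are disjoint and each contributes to $\supp x \cap \phi^{-1}((-\infty,\kappa])$. Therefore $\sum_q x_q q$ lies in $\nov{(\Q K)}{\fab G}{\phi}$ and is mapped by $\widetilde{s_\alpha}$ to $x$.

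Finally, $\widetilde{s_\alpha}$ is additive by construction, and its multiplicativity reduces coefficient-by-coefficient to \cref{QHQ is QG}: for any fixed $g \in G$, both $\widetilde{s_\alpha}(y)\widetilde{s_\alpha}(z)$ and $\widetilde{s_\alpha}(yz)$ have $g$-coefficient given by a finite sum, the finiteness on the $\novq G \phi$ side being the classical argument using \cref{properties of nivikov}\eqref{pon item 2} (the $\phi$-values of factors contributing to $g$ are forced into a bounded interval), and the finiteness on the $(\Q K)\fab G$ side following analogously together with the finite support of each $y_q$ and $z_q$. Since $s_\alpha$ is already a ring homomorphism on finitely supported elements, the two coefficients agree. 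I do not foresee a real obstacle: the only genuine content of the proposition is that the Novikov condition interacts cleanly with the coset decomposition of $G$, which is forced by $\phi$ being constant on cosets of $K$.
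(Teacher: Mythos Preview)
Your proof is correct and follows essentially the same approach as the paper: you define the extension by the same formula as $s_\alpha$ (the paper writes it in function notation as $\iota(x)(g) = x(\alpha(g))(g\, s_\alpha(\alpha(g))^{-1})$, but this is your $\widetilde{s_\alpha}$), construct the same inverse $x \mapsto (x_q)_q$ with $x_q(k) = x(k\, s_\alpha(q))$, and verify the Novikov condition via the coset decomposition. You are in fact slightly more explicit than the paper about why multiplicativity survives the passage to infinite sums.
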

\begin{proof}
%
We define $\iota \colon {\Q K}^{\fab{G}} \to \Q^G$ by setting $\iota(x)(g) = x(\alpha(g))(g s_\alpha(\alpha(g))^{-1})$ for every $x \colon \fab{G} \to \Q K$.
Let us unravel this definition:
we take $\alpha(g)$, an element of the the group $\fab{G}$, and evaluate $x$ on it. This yields an element of $\Q K$, which is formally a subset of $\Q^{K}$, the set of functions from $K$ to $\Q$. Thus, it makes sense to evaluate $x(\alpha(g))$ on elements in $K = \ker \alpha$, and $g s_\alpha(\alpha(g))^{-1}$ is such an element. After this last evaluation we obtain an element of $\Q$, as desired.

When restricted to $(\Q K) \fab{G}$, the map $\iota$ coincides with the map $s_\alpha$ (which is an isomorphism by \cref{QHQ is QG}): this is easy to verify, but cumbersome to write down, so we omit it here.


We claim that $\iota(\nov {(\Q K)} {\fab{G}} {\phi})$ lies within $\novq G \phi$. Indeed, take $x \in \nov {(\Q K)} {\fab{G}} {\phi}$. By definition, for every $\kappa \in \R$ the set $\supp x \cap \phi^{-1}((-\infty,\kappa]) \subseteq \fab{G}$  is finite.

Observe that for $a \in \fab{G}$ we have
\[
 \supp \iota(x) \cap \alpha^{-1}(a) = \{g \in G \mid \alpha(g) = a \textrm{ and } x(a)(g s_\alpha(\alpha(g))^{-1}) \neq 0 \}
\]
Since for every $a \in \fab{G}$ the function $x(a) \in \Q K$ has finite support in $K$,
we have
\[
 \vert \supp \iota(x) \cap \alpha^{-1}(a) \vert < \infty
\]
for every $a \in \fab{G}$.
Therefore $\supp \iota(x) \cap \phi^{-1}((\infty,\kappa]) \subseteq G$ is finite, as it is  a finite union of finite sets.
This shows that $\iota(x) \in \novq G \phi$, as claimed.

There is an obvious inverse to $\iota$ defined on $\novq G \phi$, namely $x \in \novq G \phi$ being mapped to $x' \colon \fab{G} \to \Q K$ given by $x'(a)(k) = x(ks(a))$.  It is immediate that $x' \in \nov {(\Q K)} {\fab{G}} \phi$. We now verify that this map is an inverse to $\iota$:
\[
 \iota(x')(g) = x'\big(\alpha(g)\big)\big(g s_\alpha(\alpha(g))^{-1}\big) = x\big(g s_\alpha(\alpha(g))^{-1} s_\alpha(\alpha(g))\big) = x(g)
\]
and
\[
 \iota(x)'(a)(k) = \iota(x)(ks_\alpha(a)) = x( \alpha(k s_\alpha(a))\big(ks_\alpha(a) s_\alpha(\alpha(ks_\alpha(a)))^{-1}\big) = x(a)(k) \qedhere
\]

\end{proof}

Again, we will denote the isomorphism $\nov {(\Q K)} {\fab{G}} \phi \overset{\cong}\to \novq G \phi$ by $s_\alpha$.

\subsection{BNS invariants and Sikorav's theorem}

Let $G$ be a finitely generated group, and let $X$ denote the Cayley graph of $G$ with respect to some finite generating set $T$. Recall that $G$ is the vertex set of $X$, and each edge comes with an orientation and a label from $T$.

\begin{dfn}
 A  character $\phi \in H^1(G;\R) \s- \{0\}$ lies in the \emph{Bieri--Neumann--Strebel} (or \emph{BNS}) \emph{invariant} $\Sigma(G)$ \iff the full subgraph of $X$ spanned by $\{ g \in G \mid \phi(g) \geqslant 0 \}$ is connected.
\end{dfn}

The invariant $\Sigma(G)$ does not depend on the choice of a finite generating set -- this can easily be proven directly, but it will also follows from Sikorav's theorem below.

The significance of BNS invariants for us lies in the following result.

\begin{thm}[Bieri--Neumann--Strebel~{\cite{Bierietal1987}*{Theorem B1}}]
\label{BNS fibring}
Let $G$ be a finite\-ly generated group, and let $\phi \colon G \to \Z$ be non-trivial. Then $\ker \phi$ is finitely generated \iff $\{\phi, -\phi\} \subseteq \Sigma(G)$.
\end{thm}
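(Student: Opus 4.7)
The plan is to prove the two implications separately, with the converse being substantially harder.

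\textbf{The easy direction.} Suppose $K := \ker \phi$ is finitely generated. Since the image of $\phi$ is infinite cyclic (hence free), the short exact sequence $1 \to K \to G \to \phi(G) \to 1$ splits; choose $t \in G$ with $\phi(t)$ generating $\phi(G)$. Let $F$ be a finite generating set of $K$ and form the finite symmetric generating set $T := F \cup F^{-1} \cup \{t^{\pm 1}\}$ of $G$, with associated Cayley graph $X$. To see $\phi \in \Sigma(G)$, I would connect any vertex $g$ of the positive subgraph $X^+ := X[\phi \geqslant 0]$ to $1$ within $X^+$: writing $g = kt^n$ with $k \in K$ and $n = \phi(g)/\phi(t) \geqslant 0$, the sequence $kt^n, kt^{n-1}, \ldots, kt^0 = k$ obtained by right-multiplying by $t^{-1}$ stays in $X^+$, and then one spells $k$ as a word in $F$ to trace a path inside the level set $\phi^{-1}(0) \subseteq X^+$ from $k$ to $1$. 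Since $-\phi$ has the same kernel, the symmetric argument yields $-\phi \in \Sigma(G)$.

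\textbf{The hard direction.} Assume $\{\phi, -\phi\} \subseteq \Sigma(G)$. After replacing $\phi$ by $\phi$ divided by a generator of its image, I may assume $\phi$ is surjective; pick $t$ with $\phi(t) = 1$ and any finite symmetric generating set $T$ of $G$. For each $s \in T$ set $u_s := s\, t^{-\phi(s)} \in K$. Any $T$-word $s_1 s_2 \cdots s_n$ representing an element $k \in K$ admits a telescoping rewrite
\[
k \;=\; \prod_{i=1}^{n} t^{a_i}\, u_{s_i}\, t^{-a_i}, \qquad a_i := \phi(s_1 s_2 \cdots s_{i-1}),
\]
with $a_{n+1} = 0$ because $\phi(k) = 0$. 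The plan is to produce, using BNS, a constant $M$ (depending only on $G$, $T$, $\phi$) such that \emph{every} $k \in K$ admits a $T$-word representative whose partial $\phi$-sums satisfy $|a_i| \leqslant M$; this immediately yields the finite generating set $\{\, t^a u_s t^{-a} : s \in T,\ |a| \leqslant M \,\}$ for $K$.

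\textbf{How BNS provides the bound, and the main obstacle.} The hypothesis $\phi \in \Sigma(G)$ supplies, for each $s \in T$, a $T$-word $v_s^+$ equal to $s$ in $G$ whose partial $\phi$-values stay at or above $\phi(s)$ (obtained by pulling back a path in $X^+$ connecting $1$ to a suitable translate of $s$); dually, $-\phi \in \Sigma(G)$ produces $v_s^-$ with partial sums bounded above by $\phi(s)$. One then iteratively rewrites the word $s_1 \cdots s_n$: whenever the running sum $a_i$ is about to drop below $-M$, the offending letter is replaced by $v_{s_i}^+$, and symmetrically with $v_s^-$ whenever $a_i$ threatens to exceed $+M$. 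The principal obstacle is the termination-and-boundedness bookkeeping for this iterative rewriting: one must choose $M$ (in terms of the maximal length and maximal $|\phi|$-excursion of the finitely many words $v_s^\pm$) so that the substitutions do not cascade, and argue that the procedure converges to a word whose partial $\phi$-sums stay inside $[-M, M]$ throughout. This ``taming'' of walks in $\Z$ via a finite menu of local moves — equivalent to a compactness statement for the two connected subgraphs $X^+$ and $X^-$ — is the technical heart of Bieri--Neumann--Strebel's Theorem B1.
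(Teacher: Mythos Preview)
The paper does not prove this theorem; it is quoted from Bieri--Neumann--Strebel with a citation and no argument. So there is no ``paper's own proof'' to compare your attempt against.

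On its own merits: your easy direction is fine (you should say explicitly that you choose $t$ with $\phi(t)>0$, otherwise the claim $n\geqslant 0$ fails). Your hard direction is the standard strategy, and the telescoping identity and the proposed finite generating set $\{t^a u_s t^{-a}:|a|\leqslant M\}$ are correct once the bound $M$ is established. However, you have not actually established it: you describe a rewriting procedure and then say that arranging termination and boundedness ``is the technical heart of Bieri--Neumann--Strebel's Theorem~B1.'' That is an acknowledgement of a gap, not a closure of it. The honest difficulty is exactly the one you name --- showing that replacing offending letters by the words $v_s^\pm$ does not create new violations elsewhere and that the process terminates --- and your text gives no mechanism (e.g.\ an induction on the maximal excursion, or the ``$t$-conjugation pushes generators up/down'' reformulation of $\pm\phi\in\Sigma(G)$) to control this. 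As written, the hard direction is a plan rather than a proof; if completeness is the goal, this step must be carried out.
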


In \cite{Sikorav1987} Sikorav gave a way of computing $\Sigma(G)$ using the Novikov rings. He used the rings $\nov \Z G \phi$, whereas we have to work with $\novq G \phi$. For this reason we will prove Sikorav's theorem in our setting (the proof is identical).

\begin{thm}[Sikorav's theorem]
\label{Sikorav}
Let $G$ be a finitely generated group, and let $\phi \in H^1(G;\R) \s- \{0\}$. The following are equivalent:
\begin{enumerate}
 \item $\phi \in \Sigma(G)$;
 \item $H_1(G;\nov \Z G \phi) = 0$;
 \item $H_1(G;\novq  G \phi) = 0$.
\end{enumerate}
\end{thm}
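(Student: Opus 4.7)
The statement is due to Sikorav \cite{Sikorav1987} for $\Z$-coefficients; my plan is to adapt that argument so it handles both $\Z$- and $\Q$-coefficients in parallel. Throughout, set $R \in \{\Z, \Q\}$ and $M = \nov R G \phi$. The Novikov rings $\nov \Z G \phi$ and $\novq G \phi$ share the only features the argument uses -- containment of the relevant group ring, convergence of infinite sums whose $\phi$-values diverge to $+\infty$ (\cref{novikov series}), and $\phi$-boundedness from below of supports -- so the same argument, with $\Z$ replaced by $R$, goes through verbatim.

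Before anything else, since $\phi$ is non-trivial there is $t \in T \cup T^{-1}$ with $\phi(t)>0$, and then $(t-1)$ is invertible in $M$ with inverse $-\sum_{n\geq 0}t^n$ (convergent by \cref{novikov series}); this already forces $H_0(G;M)=0$, so the content of the theorem is really about $H_1$. The Cayley graph of $(G,T)$ yields a partial free $\Z G$-resolution $\Z G^T\xrightarrow{d_1}\Z G\to \Z\to 0$ with $d_1(e_t)=t-1$, which I will extend to a full free resolution $F_\bullet\to \Z$; then $H_1(G;M)$ is the degree-$1$ homology of $F_\bullet\otimes_{\Z G}M$.

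For $(1)\Rightarrow (2),(3)$: assume $\phi\in\Sigma(G)$, so the full subgraph $X^+$ of the Cayley graph on $\{g:\phi(g)\geq 0\}$ is connected. For each $g$ with $\phi(g)\geq 0$, a choice of path in $X^+$ from $1$ to $g$ produces a lift $\sigma(g)\in \Z G^T$ of $g-1$ whose coefficients are supported in $X^+$. Given a $1$-cycle $z\in M^T$, I will inductively kill its lowest-$\phi$ layer: the constraint $d_1z=0$ forces this layer to satisfy a finite linear relation, realisable as a combination of Fox-calculus boundaries of closed loops in $X^+$ (obtained by comparing $\sigma(gt)$ with $\sigma(g)+g\cdot e_t$ for appropriate $g,t$). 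Subtracting these boundaries strictly raises the $\phi$-floor of the remaining cycle; iterating, the corrections have $\phi$-floors diverging to $+\infty$, so by \cref{novikov series} their sum converges in $F_2\otimes M$ and realises $z$ as a boundary.

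For $(2),(3)\Rightarrow(1)$: contrapositively, assume $\phi\notin\Sigma(G)$, so $X^+$ splits into distinct connected components. I will construct a non-zero class in $H_1(G;M)$ by mimicking the free-group case: using the invertibility of $t-1$ in $M$ for any $t$ with $\phi(t)>0$, I will build a $1$-cycle $z\in M^T$ that \textquotedblleft flows\textquotedblright\ between distinct components of $X^+$ via negative-$\phi$ detours permitted in $M$. Non-triviality of $z$ will be established by pairing with the obstruction to connectivity: any $2$-chain in $F_2\otimes M$ comes from closed loops in the Cayley graph, and no such loop can transport net flux across a disconnection of $X^+$, a vanishing that persists even after passing to the Novikov completion. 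The hard part is the convergence in $(1)\Rightarrow(2),(3)$, namely ensuring that the $\phi$-floor of the residual cycle is strictly raised at each step by a uniform-enough amount; this comes down to a careful choice of the paths defining $\sigma$, and is precisely where the geometric hypothesis $\phi\in\Sigma(G)$ is exploited.
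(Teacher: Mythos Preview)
Your forward direction $(1)\Rightarrow(2),(3)$ is in the right spirit, but the paper's execution is cleaner and avoids precisely the convergence issue you flag at the end. Rather than iteratively killing the lowest $\phi$-layer of an arbitrary cycle (where a ``uniform-enough'' raising of the floor is indeed delicate and your sketch does not explain how to secure it), the paper first rewrites the space of $1$-cycles in $\nov\Z G\phi\otimes C_1$ using a finite explicit basis $e_t' = e_t - (1-t)(1-s)^{-1}e_s$ for $t\in T\s-\{s\}$, where $s$ is a fixed generator with $\phi(s)>0$. Then, using the connectivity of $X^+$ once per generator $t$ (not once per group element), one builds $c_t\in C_2$ with $\partial c_t = e_t' - \sum_r \lambda_r e_r'$ and $\phi(\lambda_r)>0$. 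The resulting map satisfies $\partial\circ c = \I - M$ with $M$ a \emph{finite} matrix all of whose entries have strictly positive $\phi$-value; the Neumann series $\sum M^i$ then converges automatically. This sidesteps your convergence worry entirely: uniformity is free because the matrix is finite.

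Your backward direction has a genuine gap. The contrapositive strategy is not wrong in principle, but ``pairing with the obstruction to connectivity'' and ``net flux across a disconnection'' are not defined objects here: $F_2$ is an arbitrary free module, not a module of loops, and you give no cochain to pair against nor any reason the pairing would survive Novikov completion. The paper instead proves $(3)\Rightarrow(1)$ directly and constructively: given $g,h$ with $\phi(g),\phi(h)\geqslant 0$, join them by any path $p$ in $X$ and append infinite $s$-rays $r_1,r_2$ from $g$ and $h$. The resulting $1$-chain $\xi$ is a cycle in $\novq G\phi\otimes C_1$; vanishing of $H_1$ gives $x$ with $\partial x=\xi$. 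Truncating $x$ to a finite chain $x'$ so that $\partial(x-x')$ is supported only on edges of strictly positive $\phi$-value, one finds that $\xi-\partial x'$ is a cycle supported on such edges and on tails of the rays, and a path in its support connects a vertex on $r_1$ to one on $r_2$; concatenating with segments of the rays gives the desired path in $X^+$. Note also that the paper organises the argument as the cycle $(1)\Rightarrow(2)\Rightarrow(3)\Rightarrow(1)$, using right-exactness of tensoring with $\Q$ for the middle step, which is more economical than treating $\Z$ and $\Q$ in parallel.
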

\begin{proof}
Let us start with some general setup: Since $\phi \neq 0$, there exists a generator $s \in T$ with $\phi(s) \neq 0$. Without loss of generality we may assume that $\phi(s)>0$.

Let
\[C_1 \to C_0\]
denote the cellular chain complex of $X$ with coefficients in $\Z$. It is immediate that $C_1$ and $C_0$ are finitely generated free $\Z G$-modules. In fact, $C_0$ is $1$-dimensional, and we can append the chain complex on the right with the augmentation map $\Z G \to \Z$. We obtain
\[
 C_1 \to C_0 \to \Z
\]
and it is immediate that this complex is exact at $C_0$. We append the complex again, this time on the left by adding a free $\Z G$-module $C_2$ in such a way that
\[
C_2 \to  C_1 \to C_0 \to \Z
\]
is exact at $C_1$ and $C_0$. We denote the above chain complex by $C_\ast$. The differentials will be denoted by $\partial$.

We pick cellular bases for the free modules $C_0$ and $C_1$. For $C_0$ the basis we pick is the singleton $\{1\}$ -- recall that group elements are the vertices of $X$. For $C_1$ the basis is the collection of oriented edges emanating from $1$. We will denote the basis element corresponding to an edge labelled $t \in T$ by $e_t$.

Setting the value of $\phi$ on the basis elements of $C_1$ to zero gives as a map
$\phi \colon C_1 \to \oR$.
Specifically, we have $\phi(g.e_t) = \phi(g)$, which assigns a value to every edge in $X$.

It is clear that $\{ e_t \mid t \in T \}$ forms a basis of $ \nov \Z G \phi \otimes_{\Z G} C_1$, and we may view chains in  $\nov \Z G \phi \otimes_{\Z G} C_1$ as functions from the edge set $E$ of $X$ to $\Z$.

\smallskip

\noindent
$\mathbf{(1) \Rightarrow (2)}$
In this part of the  proof we adopt the convention that tensoring without specified ring is always over $\Z G$.

Since $\phi(s) \neq 0$, the element $1-s$ is invertible in $\nov \Z G \phi$ with inverse $\sum_{i\in \N} s^i$. Therefore the space of $1$-cycles in $\nov \Z G \phi \otimes C_\ast$ is spanned by
\[
 e_t' = e_t - (1-t)(1-s)^{-1}e_s
\]
with $t \in T \s- \{s\}$. In fact it is easy to see that these elements form a basis of the space of $1$-cycles.
We will now define a $\nov \Z G \phi$-linear map $c$ from the $1$-cycles to $\nov \Z G \phi \otimes C_2$ by specifying its value on every basis element  $e_t'$.

Take $t \in T \s- \{s\}$. Since $\phi(s) > 0$, there exists $n \in \N$ such that $\phi(ts^n)>\phi(s)$ and $\phi(s^n) >\phi(s)$.
We are assuming that $\phi \in \Sigma(G)$, and the definition tells us that we may connect $s^{-1} t s^n$ to $s^{-1} s^n $ in $X$ by a path which goes only through vertices $g$ with $\phi(g) \geqslant 0$.
This implies that the path traverses only edges whose $\phi$ value is non-negative.

We now act (on the left) by $s$, and conclude that the existence of a
 $1$-chain $p$ (supported on the image  of the path we just constructed) with boundary $\partial p = t s^n - s^n$, and such that $\phi$ evaluated at the edges in the support of $p$ is always positive. Since $C_\ast$ is exact at $C_1$, there exists $c_t \in C_2$ such that
 \begin{equation}
  \partial c_t = e_t + t \sum_{i=0}^{n-1} s^i e_s -p - \sum_{i=0}^{n-1} s^i e_s  \tag{$\ddagger \ddagger$}
 \label{sikorav eq}
 \end{equation}
since the right-hand side is a cycle -- it is supported on a cycle in $X$ composed of the edge corresponding to $e_t$, the path underlying $p$, and two segments, one connecting $1$ to $s^n$ and the other connecting $t$ to $ts^n$. We define $c(e'_t) = c_t$.

Now $\partial \circ c (e_t') = \partial c_t $, which we have already computed. Since $\partial c_t$ is a cycle, we can write it using the basis $\{e'_{t} \mid t \in T \s- \{s\} \}$ -- this is done by forgetting $e_s$ in \eqref{sikorav eq}, and replacing $e_t$ by $e'_t$. We obtain
\[\partial c_t = e_t' - \sum_{r \in T \s- \{s\}}\lambda_r e_r'\]
where $\lambda_r \in \nov \Z G \phi$ is such that $\phi(\lambda_r)>0$ for every $r$ -- this follows from the properties of $p$. Writing $\partial \circ c$ as a matrix, with respect to the basis $\{e'_{t} \mid t \in T \s- \{s\} \}$, we obtain
\[\I - M \]
where $\phi$ takes every entry of $M$ to $(0,\infty)$. This matrix is invertible over $\nov \Z G \phi$ with inverse $\sum_{i \in \N} M^i$ (compare \cref{novikov series}), and therefore $\partial \circ c$ is an epimorphism. Hence the differential
\[\partial \colon \nov \Z G \phi \otimes C_2 \to \nov \Z G \phi \otimes C_1 \]
is onto the cycles, and thus $H_1(\nov \Z G \phi \otimes C_\ast) = 0$, as claimed.

\smallskip
\noindent
$\mathbf{(2) \Rightarrow (3)}$ This is immediate since $\novq G \phi =  \novq G \phi \otimes_{\nov \Z G \phi} \nov \Z G \phi $, and tensoring is right-exact.

\smallskip
\noindent
$\mathbf{(3) \Rightarrow (1)}$
We start by replacing $C_\ast$ by $\Q \otimes_\Z C_\ast$; in order not to make the notation too cumbersome, we will continue to use $C_\ast$. Also, in this part, unspecified tensor products are taken over $\Q G$.

We are assuming that
\[
 0 = H_1(G;\novq G \phi) = H_1(\novq G \phi \otimes C_\ast)
\]
Take $g,h \in G$ with $\phi(g), \phi(h) \geqslant 0$. We need to show that there exists a path in $X$ going only through vertices $x \in G$ with $\phi(x) \geqslant 0$ connecting $g$ to $h$.

Since $X$ is connected, there exists a path $p$ in $X$ connecting $g$ to $h$. We take $p$ to be geodesic. Recall that  $\phi(s) > 0$.  Consider the function $\xi \colon E \to \Q$ which is the characteristic function of the set of edges of $p$ (taking orientations into account) and the edges of the infinite rays $r_1$ and $r_2$ emanating from $g$ and $h$ and using only edges with label $s$ (and only positively  orientated).

The function $\xi$ clearly lies in $\novq G \phi \otimes C_1$; by construction, it is actually a cycle there. Since $H_1(\novq G \phi \otimes C_\ast) = 0$, there exists an element $x \in \novq G \phi \otimes C_2$ with $\partial x = \xi$.

Since $C_2$ and $\novq G \phi \otimes C_2$ are free modules, there exists a finite-dimensional free submodule $C_2' \leqslant C_2$ such that
\[
 x \in  \novq G \phi \otimes C_2'
\]
The module $C_2'$ has a finite basis; the boundary of each of the basis elements is a chain in $C_1$. The chosen basis for $C_2'$ gives also a basis for  $\novq G \phi \otimes C_2'$, and so we may write $x = (x_1, \dots, x_m)$ for some $m \in \N$ with $x_i \in \novq G \phi$. By taking the restriction of each $x_i \colon G \to \Q$ to suitable finite subset of $G$ we form elements $x_i' \in \Q G$ such that
\[
 \phi\big(\partial (x_i - x_i')\big)>0
\]
for every $i$. Let $x' = (x_1', \dots, x_m') \in C_2' \leqslant C_2$. By construction, the support of $\partial (x-x')$ contains
only edges whose value under $\phi$ is positive. Now
\[
 \xi = \partial x = \partial(x-x') + \partial x'
\]
and so $\xi - \partial x'$ is a cycle in $\novq G \phi \otimes C_1$ supported only on edges of positive $\phi$-value.

Note that $\partial(x')$ has finite support, and the rays $r_1$ and $r_2$ are both infinite. Thus, $\xi - \partial x'$ contains in its support some edges of $r_1$ and $r_2$. Since $\xi - \partial x'$ is a cycle, there exists a path in $\supp (\xi - \partial x')$, say $q$, connecting some endpoints, say $x_1$ and $x_2$, of these edges. Now a path starting at $g$, following $r_1$ up to the vertex $x_1$ (without loss of generality), then following $q$, and then following the ray $r_2$ against its orientation to $h$, is as desired.
\end{proof}

\subsection{Linnell's skew-field and Novikov rings}

Suppose that we have a finitely generated group $G$ which is torsion free and satisfies the Atiyah conjecture. We have already discussed that, in this case, $\Q G$ embeds into Linnell's skew-field $\D(G)$.
We pick a character $\phi \colon G \to \R$. We also pick a section  $s_\alpha$ of the free abelianisation map $\alpha$, and let $K = \ker \alpha$. Recall that we will also treat $\phi$ as a character $\phi\colon \fab{G} \to \R$.

The key point of this section is to find a skew-field (depending on a character $\phi$) that will simultaneously house $\novq G \phi$ and $\D(G)$. We will use it to define a set of good elements of $\D(G)$, which we will call \emph{representable in $\novq G \phi$}, by taking the intersection of $\D(G)$ and $\novq G \phi$ in the larger object.
To achieve this goal, we need to study a multitude of injective maps between objects we have previously constructed. These are summarised in \cref{maps diagram}, which is a commutative diagram.

\begin{figure}
\[
 \xymatrix{
\Q G \ar@{^{(}->}[dd] \ar@{^{(}->}[rrr]
&
&
& \D(G) \ar@{^{(}->}@/^5pc/[dd]^{i_\leqslant}
\\
& (\Q K) {\fab{G}} \ar@{^{(}->}[r] \ar@{_{(}->}[ul]^{s_\alpha}_{\cong} \ar@{^{(}->}[d]
& \D(K) {\fab{G}} \ar@{^{(}->}[r] \ar@{^{(}->}[ur]^{s_\alpha} \ar@{^{(}->}[d] \ar@{^{(}->}[dr]_{\id_\leqslant}
& \mathrm{Ore}(\D(K) {\fab{G}}) \ar@{_{(}->}[u]_{\mathrm{Ore}(s_\alpha)}^\cong \ar@{^{(}->}[d]^{\mathrm{Ore}(\id_\leqslant)}
\\
\novq G \phi \ar@{_{(}->}@/_2pc/[rrr]_{j_\phi}
& \nov {(\Q K)} {\fab{G}} \phi \ar@{_{(}->}[l]^{s_\alpha}_{\cong} \ar@{^{(}->}[r]
& \nov {\D(K)} {\fab{G}} \phi \ar@{^{(}->}[r]_{\id_\leqslant}
& \F_\leqslant(G)
 }
\]
\caption{A commutative diagram of the various embeddings}
\label{maps diagram}
\end{figure}
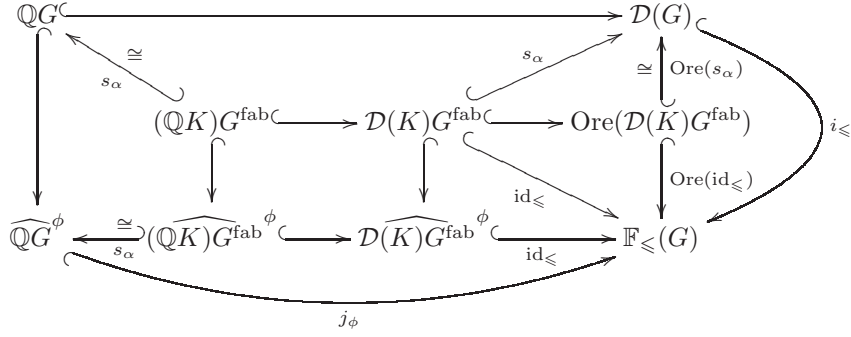

In the diagram, we see three maps labelled $s_\alpha$. These are all induced by the set-theoretic section $s_\alpha \colon \fab{G} \to G$ in a way we had made explicit previously.

All the unlabelled maps are various identity maps.

We pick a biordering $\leqslant$ on the finitely generated free-abelian group $\fab{G}$ which is \emph{compatible} with $\phi$, in the sense that $\phi \colon \fab{G} \to \R$ is an order-preserving homomorphism. Let $\F_\leqslant(G)$ denote the Malcev--Neumann skew-field $\F_\leqslant( \D(K) \fab{G} )$ associated to $\leqslant$. By definition, $\F_\leqslant(G)$ is a subset of $\D(K)^{\fab{G}}$. The ring $\nov {\D(K)} {\fab{G}} \phi$ is another such subset, and in fact we have
\[
 \nov {\D(K)} {\fab{G}} \phi \subseteq \F_\leqslant(G)
\]
since $\phi$ is order preserving; we will denote the inclusion map by $\id_\leqslant$. This map is located in the bottom-right part of \cref{maps diagram}. Note that $\id_\leqslant$ does not in fact depend on $\leqslant$. We are adding the subscript for clarity.
We define
\[
j_\phi = \id_\leqslant \circ \id \circ {s_\alpha}^{-1} \colon \novq G \phi \to \F_\leqslant(G)
\]

Composing
\[
\id_\leqslant \circ \id \colon \D(K) \fab{G} \to \F_\leqslant(G)
\]
gives us an inclusion which we will also call $\id_\leqslant$; again, $\id_\leqslant$ does not depend on $\leqslant$.
Now $\Ore(\id_\leqslant) \colon \Ore(\D(K) \fab{G}) \to \F_\leqslant(G)$ denotes the map induced by $\id_\leqslant \colon \D(K) \fab{G} \to \F_\leqslant(G)$, as in \cref{Ore facts}; this last map is injective, as it is a non-trivial ring homomorphism from a skew-field.
We define
\[
 i_\leqslant = \Ore(\id_\leqslant) \circ \Ore(s_\alpha)^{-1} \colon \D(G) \to \F_\leqslant(G)
\]
Note that $\Ore(\id_\leqslant)$ and $i_\leqslant$ are the only maps in \cref{maps diagram} which depend on the choice of $\leqslant$.

This completes the description of the embeddings visible in \cref{maps diagram}.

\begin{dfn}[Representable elements]
 We say that an element $x \in \D(G)$ is \emph{representable} in $\novq G \phi$ \iff we have $i_\leqslant(x) \in  j_\phi(\novq G \phi)$ for every biordering $\leqslant$ compatible with $\phi$.
 %
\end{dfn}

\begin{lem}
\label{iotaC}
Let $x\in \D(G)$ be representable in $\novq G \phi$. If $\leqslant$ and $\leqslant'$ are biorderings on ${\fab{G}}$ compatible with $\phi$, then
\[
 i_\leqslant(x) = i_{\leqslant'}(x)
\]
\end{lem}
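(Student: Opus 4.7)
My plan is to use the injectivity of $j_\phi$ to reduce the equality $i_\leqslant(x) = i_{\leqslant'}(x)$ to an equality of two preimages in $\novq G \phi$. By the definition of representability, there exist $y, y' \in \novq G \phi$ with $i_\leqslant(x) = j_\phi(y)$ and $i_{\leqslant'}(x) = j_\phi(y')$, so it suffices to show $y = y'$. Note that although $j_\phi$ formally factors through $\id_\leqslant$, its image inside $\D(K)^{\fab{G}}$ does not depend on $\leqslant$, so both $y$ and $y'$ live in the same set.

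The first concrete step is to write $\Ore(s_\alpha)^{-1}(x)$ as a right fraction $ab^{-1}$, where $a, b \in \D(K)\fab{G}$ and $b$ is a non-zero-divisor. Chasing through the commutative diagram of \cref{maps diagram}, the equality $i_\leqslant(x) = j_\phi(y)$ becomes $j_\phi(y) \cdot b = a$ in $\F_\leqslant(G)$, and likewise $j_\phi(y') \cdot b = a$ in $\F_{\leqslant'}(G)$.

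The core of the argument is to reinterpret these two identities inside the common ambient ring $\nov{\D(K)}{\fab{G}}{\phi}$. Since $b$ has finite support in $\fab{G}$, and $j_\phi(y)$, $j_\phi(y')$ satisfy the Novikov support condition for $\phi$, the twisted convolutions $(j_\phi(y)\cdot b)(g)$ and $(j_\phi(y')\cdot b)(g)$ are finite sums for every $g \in \fab{G}$. In particular they are independent of the biordering and coincide with the products computed in either Malcev--Neumann completion. Hence $j_\phi(y) \cdot b = a = j_\phi(y') \cdot b$ inside $\nov{\D(K)}{\fab{G}}{\phi}$, i.e. $\bigl(j_\phi(y) - j_\phi(y')\bigr) \cdot b = 0$ there.

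Finally, I would view this last equation inside the skew-field $\F_\leqslant(G)$ via the inclusion $\id_\leqslant$ and multiply on the right by $b^{-1}$, which exists since $\F_\leqslant(G)$ is a skew-field. This yields $j_\phi(y) = j_\phi(y')$, and injectivity of $j_\phi$ then gives $y = y'$, completing the proof. The main delicate point is the unambiguous computation of products of Novikov series with finitely supported elements across the various rings; this, however, reduces to the observation that $\supp b$ is finite, so no convergence issues arise.
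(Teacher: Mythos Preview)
Your proposal is correct and follows essentially the same approach as the paper: write $x$ as a fraction $p/q$ (your $a/b$) in $\Ore(\D(K)\fab{G})$, observe that both $i_\leqslant(x)$ and $i_{\leqslant'}(x)$ land in the common subring $\nov{\D(K)}{\fab{G}}{\phi}$, deduce $(i_\leqslant(x)-i_{\leqslant'}(x))\cdot q = 0$ there, and conclude by the absence of zero-divisors. The only cosmetic difference is that you introduce the preimages $y,y'\in\novq G \phi$ and phrase the no-zero-divisor step as multiplying by $b^{-1}$ in $\F_\leqslant(G)$, whereas the paper works directly with $i_\leqslant(x),i_{\leqslant'}(x)$ and simply cites that $\nov{\D(K)}{\fab{G}}{\phi}$ embeds in a skew-field; your discussion of convolutions being finite sums is correct but already encoded in the fact that $\id_\leqslant$ is a ring homomorphism.
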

\begin{proof}
Recall that $x = s_\alpha(p) s_\alpha(q)^{-1}$ with $p,q \in \D(K) {\fab{G}}$ and $q \neq 0$.

Now,
\begin{align*}
 i_\leqslant(x) &= \Ore(\id_\leqslant)\Ore(s_\alpha)^{-1}(s_\alpha(p) s_\alpha(q)^{-1}) \\
 &= \Ore(\id_\leqslant)(p/q) \\
  &= \id_\leqslant(p)\id_\leqslant(q)^{-1}
\end{align*}
and similarly for $\leqslant'$.
Since $x$ is representable, both $i_\leqslant(x)$ and $i_{\leqslant'}(x)$ are elements of the ring $\nov {\D(K)} {\fab{G}} \phi$ (which is a subset of $\D(K)^{\fab{G}}$).
But we also know that \[\id_\leqslant(p) = \id_{\leqslant'}(p) = \id(p) \in \nov {\D(K)} {\fab{G}} \phi\]
and
\[\id_\leqslant(q) = \id_{\leqslant'}(q) = \id(q) \in \nov {\D(K)} {\fab{G}} \phi\]
and thus
\begin{align*}
\big( i_\leqslant(x) - i_{\leqslant'}(x)\big) \id(q) &=
\id_\leqslant(p)\id_\leqslant(q)^{-1}\id_\leqslant(q) - \id_{\leqslant'}(p)\id_{\leqslant'}(q)^{-1} \id_{\leqslant'}(q) \\
&= \id_\leqslant(p) - \id_{\leqslant'}(p)\\
&= 0
\end{align*}
But the ring $\nov {\D(K)} {\fab{G}} \phi$ embeds in a skew-field (via $\id_\leqslant$), and hence it does not admit zero-divisors. Since $q \ne 0$, we conclude that
\[
 i_\leqslant(x) = i_{\leqslant'}(x) \qedhere
\]
\end{proof}

In view of the above, we will denote by $\iota_\phi(x) \in \novq G \phi$ the unique element such that for any (hence every) suitable $\leqslant$ we have
\[
 j_\phi\big(\iota_\phi(x) \big) = i_\leqslant(x)
\]
We think of $\iota_\phi$ as a map taking representable elements in $D(G)$ to their representatives in $\novq G \phi$. It is easy to see that representable elements form a subring of $\D(G)$, and $\iota_\phi$ is a ring homomorphism.

\begin{dfn}[$\D(G,S)$]
For $S \subseteq H^1(G;\R)$ we define $\D(G,S)$ to be the subset of $\D(G)$ consisting of elements representable over $\novq G \phi$ for every $\phi \in S$.

When $S = \{ \phi \}$, we will write $\D(G,\phi)$ for $\D(G,\{\phi\})$.
\end{dfn}


\begin{rmk}
\label{DHU is a ring}
It is clear that $\D(G,S)$ is a subring of $\D(G)$, as it is an intersection of subrings.
\end{rmk}

\begin{lem}
 \label{making DHU Qequiv}
 Let $S, S' \subseteq H^1(G;\R)$ be two subsets with $S' \subseteq S$. Then
 \[
  \D(G,S) \leqslant \D(G,S')
 \]
 is a subring.
\end{lem}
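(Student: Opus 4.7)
The plan is very short because the statement is essentially a tautology extracted from the definitions, so I would simply unpack them carefully. By \cref{DHU is a ring}, both $\D(G,S)$ and $\D(G,S')$ are subrings of $\D(G)$, so the only content is the inclusion of underlying sets.

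First I would recall that, by definition, $\D(G,S) = \bigcap_{\phi \in S} \D(G,\phi)$, where $\D(G,\phi)$ is the set of $x \in \D(G)$ which are representable in $\novq{G}{\phi}$, that is, satisfy $i_\leqslant(x) \in j_\phi(\novq{G}{\phi})$ for every biordering $\leqslant$ on $\fab{G}$ compatible with $\phi$. Then pick $x \in \D(G,S)$; by definition $x$ is representable in $\novq{G}{\phi}$ for every $\phi \in S$, hence, since $S' \subseteq S$, also for every $\phi \in S'$. Therefore $x \in \D(G,S')$, proving $\D(G,S) \subseteq \D(G,S')$ as sets.

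Combining this inclusion with the fact that both sides are subrings of $\D(G)$ (from \cref{DHU is a ring}, which itself follows because each $\D(G,\phi)$ is a subring, being the preimage under the ring homomorphism $i_\leqslant$ of the subring $j_\phi(\novq{G}{\phi})$) yields the claim. The only ``obstacle'' worth noting is bookkeeping: one should verify that $\D(G,S)$ inherits the subring structure rather than merely being a subset, but this is immediate since arbitrary intersections of subrings of a common ring are subrings, and $\D(G,S)$ is such an intersection inside $\D(G,S')$. No nontrivial step is required.
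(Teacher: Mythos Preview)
Your proof is correct and is precisely the unpacking of the definitions that the paper has in mind; the paper's own proof consists of the single sentence ``This follows immediately from the definitions.'' Your additional remarks about why $\D(G,S)$ is a subring merely spell out what \cref{DHU is a ring} already records.
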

\begin{proof}
 This follows immediately from the definitions.
\end{proof}


\smallskip

Since we are working with Novikov rings, it is essential for us to be able to evaluate $\phi$ at elements in the various rings appearing in \cref{maps diagram}. On all rings but the ones in the right-most column of \cref{maps diagram} we already know how to evaluate $\phi$, since all these rings are group rings or Novikov rings of $G$ or $\fab{G}$. We will now focus on the right-most column.

\begin{dfn}
 Let $\phi\colon G \to \R$ be a character, and let $p,q \in \D(K) \fab{G}$ be given, with $q \neq 0$. We have already defined $\phi(p) \in \oR$ and $\phi(q) \in \R$. We now set
 \[
\phi(p/q) = \phi(p) - \phi(q)
 \]
It is immediate that $\phi(pr/qr) = \phi(p/q)$ for $r \neq 0$, and hence this formula defines the evaluation $\phi \colon \Ore{(\D(K) \fab{G})} \to \oR$.

Since $\Ore(s_\alpha)$ is an isomorphism, we define $\phi \colon \D(G) \to \oR$ by
\[
 \phi(s_\alpha(p)s_\alpha(q)^{-1}) = \phi(p/q) = \phi(p) - \phi(q)
\]

Finally, let $x \in \F_\leqslant(G)$. If $x = 0$ we set $\phi(x) = \infty$ as usual. Now suppose that $x \neq 0$. By definition, $x \in \D(K)^{\fab G}$, and $\supp x$ is well-ordered. In particular, $\supp x$ has a unique $\leqslant$-minimum, say $a$. We set $\phi(x) = \phi(a)$.
\end{dfn}

We can now add all the different evaluation maps called $\phi$ to \cref{maps diagram}. We claim that in doing so, we produce another commutative diagram -- in other words, the various evaluations defined above coincide.

The claim is trivial to verify everywhere, except for the maps
\[
\id_\leqslant \colon \nov {\D(K)} {\fab G} \phi \to \F_\leqslant (G)
\]
and
\[
\Ore(\id_\leqslant) \colon \Ore( {\D(K)} {\fab G}) \to \F_\leqslant (G)
\]

For the former map, take $x \in \nov {\D(K)} {\fab G} \phi$; we may assume that $x \neq 0$. By definition, $\phi(x) = \phi(a)$ where $a \in \supp x$ is an element minimising $\phi$. But, since $\leqslant$ is $\phi$-compatible, we may take $a$ to be the $\leqslant$-minimum of $\supp x$.

For the latter map, take $p,q \in \D(K) \fab{G} \s- \{0\}$. Let
\[a = \min_\leqslant \supp p\]
and
\[b = \min_\leqslant \supp q\]
Arguing as above, we see that
\[
 \phi(p/q) = \phi(p) - \phi(q) = \phi(a) - \phi(b)
\]
On the other hand, the construction of $\F_\leqslant(G)$ tells us that
\[\min_\leqslant \supp \id_\leqslant(q)^{-1} = b^{-1}\]
and so
\[
 \phi\big(\Ore(\id_\leqslant)(p/q)\big) = \phi\big( \min_\leqslant \supp (\id_\leqslant p \id_\leqslant(q)^{-1}) \big) = \phi(ab^{-1}) = \phi(a) - \phi(b)
\]
and we are done.


\begin{rmk}
\label{phi homom}
It follows immediately from the Ore condition and \cref{phi is a morph} (noting that $\D(K) \fab{G}$ has no zero-divisors, as it embeds in a skew-field $\F_\leqslant(G)$) that $\phi$ is now a well-defined group homomorphism $\D(G) \s- \{0\} \to \R$.

It is also immediate that for $x \in \D(G)$ the map $H^1(G;\R) \to \R$ given by $\phi \mapsto \phi(x)$ is continuous.
\end{rmk}

\subsection{Groups with finite quotients}

Let $G$ be a finitely generated  torsion-free group satisfying the Atiyah conjecture (so $\D(G)$ is a skew-field). Let   $Q$ be a finite quotient of $G$ with the corresponding kernel $H$. We fix a section $s \colon Q \to G$ of the quotient map. As always, we have $s(1) = 1$.

We will also use $\alpha_H \colon H \to \fab{H}$ to denote the free abelianisation map of $H$, and we pick a section $s_{\alpha_H}$ of this map.

\begin{dfn}
For every $\psi \in H^1(H;\R)$ we define $\psi^q \colon H \to \R$ to be the character given by $\psi^q(x) = \psi(s(q) x s(q)^{-1})$.
\end{dfn}
Note that the notation $\psi^q$ is reasonable, since
\begin{align*}
 (\psi^q)^p(x) &= \psi^q\big(s(p) x s(p)^{-1}\big) \\
 &= \psi\big(s(q) s(p) x s(p)^{-1} s(q)^{-1}\big) \\
 &= \psi(\mu(q,p)s(qp) x s(qp)^{-1}\mu(q,p)^{-1}) \\
 &= \psi^{qp}( x )
\end{align*}
since $\mu(q,p) \in H$ and therefore conjugating the argument by $\mu(q,p)$ does not alter the value under $\psi$.

\smallskip
Note that conjugation by elements $s(q)$ with $q \in Q$ induces an outer action of $Q$ on $H$. But an outer action descends to an action of $Q$ on $H^1(H;\R)$.

Let $S \subseteq H^1(H;\R)$ be a subset invariant under the action of $Q$. Recall that we have a twisted group ring $\D(H) Q$ induced by the quotient $G \to Q$. Since $S$ is $Q$-invariant, it is immediate that the action of $Q$ on $\D(H)$ preserves $\D(H,S)$. Indeed, take $x \in \D(H,S)$ and $\psi \in S$. By definition, we have $\iota_\psi(x) \in \novq H \psi$. In particular, $\iota_\psi(x)$ is an element of $\Q^H$.  Take $q \in Q$. The element $s(q)^{-1} \iota_\psi(x) s(q)$ is clearly still an element of $\Q^H$, since $H$ is normal in $G$. It is immediate that in fact  $s(q)^{-1} \iota_\psi(x) s(q) \in \novq H {\psi^q}$, and $\psi^q \in S$ since $S$ is $Q$-invariant.

Now that we know that $\D(H,S)$ is $Q$-invariant, the structure functions of $\D(H) Q$ give us a twisted group ring structure for $\D(H,S) Q$.

Interestingly, if $S$ is not necessary $Q$-invariant, the abelian group $\D(H,S)Q$ whose elements are formal sums of elements in $Q$ with coefficients in $\D(H,S)$, and the addition is pointwise, forms a right $\D(H,S')Q$-module, where $S' = \bigcap_{q\in Q} S^q$. The right action of  $\D(H,S')Q$ on $\D(H,S)Q$ is induced by the following rule
\[
 x_q q \cdot y_p p = x_q {y_p}^{q^{-1}} \mu(q,p) qp
\]
where $q,p \in Q$, $x_p \in \D(H,S)$ and $y_p \in \D(H,S')$, and where we treat
\[{y_p}^{q^{-1}} \in \D(H,S')\]
as an element in $\D(H,S)$, and the multiplication $x_p {y_p}^{q^{-1}}$ is carried out in $\D(H,S)$.

Similarly, we endow the abelian group $\F_\leqslant(H) Q$, consisting of formal sums of elements in $Q$ with coefficients in $\F_\leqslant(H)$, with the structure of a right $\D(H)Q$ module:
the right action is induced by almost the same rule as above, that is,
\[
 x_q q \cdot y_p p = x_q i_\leqslant ( {y_p}^{q^{-1}} ) \mu(q,p) qp
\]
where now $x_q$ is an element of $\F_\leqslant(H)$, and ${y_p}^{q^{-1}}\in \D(H)$ is sent to an element of $\F_\leqslant(H)$ by the map $i_\leqslant = \Ore(\id_\leqslant) \Ore(s_{\alpha_H})^{-1}$ (compare \cref{maps diagram}).

\begin{dfn}[Value and defect]
Let $\psi \in H^1(H;\R)$ be a character. For $x \in \D(H) Q$ we write $x = \sum_{q \in Q} x_q q$ with $x_q \in \D(H)$.
We define the \emph{$Q$-value} of $\psi$ at $x$ to be
\[
 \qval{\psi}(x) = \min_{p,q \in Q} \big\{ \psi^p(x_{q}) + \frac 1 {\vert Q \vert} \psi(s(q)^{\vert Q \vert}) \big\} \in \oR
\]

For a character $\psi \in H^1(H;\R)$ we define its \emph{$Q$-defect}
\[
\vert \psi \vert_Q = \max_{p,q\in Q} \Big\vert \qval{\psi}\big( s(p)s(q)s(pq)^{-1} \big) - \qval{\psi}(p)-\qval{\psi}(q) + \qval{\psi}(pq) \Big\vert \in [0,\infty)
\]
(note that $s(p)s(q)s(pq)^{-1} \in H \subseteq \D(H)$).
\end{dfn}

 In order not to overload notation, we will not differentiate between characters in $H^1(G;\R)$ and their restriction to $H$ in  $H^1(H;\R)$.

\begin{lem}
\label{qval for phi}
 For every $\phi \in H^1(G;\R)$, $q \in Q$, and $x \in \D(H)Q$, we have $\phi^q = \phi$, $\vert \phi \vert_Q = 0$, and $\qval{\phi}(x) = \phi(s(x))$.
\end{lem}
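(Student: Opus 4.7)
The plan is to handle the three claims in order: $\phi^q = \phi$, then $\qval{\phi}(x) = \phi(s(x))$, then $\vert \phi \vert_Q = 0$ as a consequence of the second. The first claim is essentially automatic: since $\phi \in \mathrm{Hom}(G, \R)$ takes values in an abelian group, conjugation in $G$ acts trivially, so $\phi^q(x) = \phi(s(q) x s(q)^{-1}) = \phi(x)$ for every $x \in H$. This equality extends from $H$ to $\D(H)$ because both $\phi$ and $\phi^q$ are group homomorphisms $\D(H)^\times \to \R$ (\cref{phi homom}) that agree on $H$.

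For the second claim, the first step is to simplify $\qval{\phi}(x)$. Using $\phi^p = \phi$ from the first claim and the homomorphism identity $\phi(s(q)^{\vert Q \vert}) = \vert Q \vert \, \phi(s(q))$, the defining formula collapses to
\[\qval{\phi}(x) = \min_{q \in Q}\{\phi(x_q) + \phi(s(q))\}\]
Writing $s(x) = \sum_q x_q s(q) \in \D(G)$, the inequality $\phi(s(x)) \geq \min_q\{\phi(x_q) + \phi(s(q))\}$ follows from $\phi$ being a valuation on $\D(G)$ together with the homomorphism property $\phi(x_q s(q)) = \phi(x_q) + \phi(s(q))$ on $\D(G)^\times$ (\cref{phi homom}). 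For the reverse inequality, the plan is to embed $\D(G)$ into the Malcev--Neumann skew-field $\F_\leqslant(G)$ for a biordering $\leqslant$ on $\fab{G}$ compatible with $\phi$, as in \cref{maps diagram}. Under this embedding each $x_q \in \D(H) \leqslant \D(G)$ has image supported in $\alpha(H) \leqslant \fab{G}$ and each $s(q)$ is supported on the single point $\alpha(s(q))$, so $x_q s(q)$ contributes to the coset $\alpha(H) + \alpha(s(q))$ in $\fab{G}$. The ring isomorphism $\D(G) \cong \D(H) Q$ of \cref{DHQ is DG} yields the direct-sum decomposition $\D(G) = \bigoplus_{q \in Q} \D(H) \cdot s(q)$, and I will use this structure, together with an analysis of leading terms in $\F_\leqslant(G)$, to rule out collective cancellation of the $\phi$-minimal contributions coming from the distinct summands.

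The third claim is then routine. For any $p, q \in Q$, the element $s(p) s(q) s(pq)^{-1}$ lies in $H$, and under its identification with $s(p) s(q) s(pq)^{-1} \cdot 1 \in \D(H) Q$ the second claim gives
\[\qval{\phi}(s(p) s(q) s(pq)^{-1}) = \phi(s(p) s(q) s(pq)^{-1}) = \phi(s(p)) + \phi(s(q)) - \phi(s(pq))\]
since $\phi$ is a homomorphism on $G$, while $\qval{\phi}(p) = \phi(s(p))$, $\qval{\phi}(q) = \phi(s(q))$, and $\qval{\phi}(pq) = \phi(s(pq))$ by the same principle. The combination inside the absolute value defining $\vert \phi \vert_Q$ is therefore identically zero, so the maximum is zero.

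The main obstacle will be the reverse inequality in the second claim. Although the decomposition $\D(G) = \bigoplus_q \D(H) s(q)$ rules out additive cancellation at the level of $\D(G)$ itself, when $K \not\subseteq H$ the cosets $\alpha(H) + \alpha(s(q))$ can coincide for distinct $q$, so the Malcev--Neumann supports of different summands may overlap; ensuring that no simultaneous cancellation occurs at the $\phi$-minimal level will require a careful tracking of how $\D(H)$ sits inside $\F_\leqslant(G)$ relative to the basis $\{s(q)\}_{q \in Q}$ and the resulting constraints on leading coefficients in $\D(K)$.
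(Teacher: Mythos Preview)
Your approach matches the paper's: establish $\phi^q=\phi$ from $\phi$ being a homomorphism to an abelian group, then compute $\qval{\phi}(x)=\min_q\{\phi(x_q)+\phi(s(q))\}=\min_q\phi(x_qs(q))=\phi(s(x))$, then deduce $|\phi|_Q=0$ by plugging in. The paper writes the last equality $\min_q\phi(x_qs(q))=\phi(s(x))$ in a single line without comment; you are right that only the direction $\phi(s(x))\geqslant\min_q\phi(x_qs(q))$ is formal from the valuation inequality, and that the reverse direction amounts to showing that the $\D(H)$-linear projection $\D(G)\to\D(H)$ onto the $1$-summand of $\D(G)\cong\D(H)Q$ does not decrease $\phi$, which is not automatic.

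Your plan to verify this inside $\F_\leqslant(G)$ by analysing leading terms is sound. The ingredient that completes your sketch in the problematic case $K\not\leqslant H$ is another instance of \cref{DHQ is DG}: one has $\D(K)\cong\D(K\cap H)\bigl(K/(K\cap H)\bigr)$, so each $\D(K)$-coefficient in $\F_\leqslant(G)$ decomposes along $K/(K\cap H)\hookrightarrow Q$, and the contribution of $\D(H)s(q)$ lands in a single component of this refinement determined by $q$ (combined with the coset $\alpha(H)\alpha(s(q))\subset\fab{G}$ for the support). That gives the non-cancellation at the $\phi$-minimal level that you need. So your proposal is essentially the paper's argument, but more careful at precisely the step the paper glosses over.
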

\begin{proof}
Let $x = \sum_{q\in Q} x_q q$ with  $x_q \in \D(H)$. Note that $x_1$ is an arbitrary element of $\D(H)$. If $x_1 = 0$ then
\[
 \phi^q(x_1) = \phi(s(q) x_1 s(q)^{-1}) = \phi(0) = \phi(x_1)
\]
If $x_1 \neq 0$ then
again
\[
 \phi^q(x_1) = \phi(s(q) x_1 s(q)^{-1}) = \phi(x_1)
\]
since $\phi$ a homomorphism from $\D(G) \s- \{0\}$ to an abelian group.

We also have
\[
 \qval{\phi}(x) = \min_{p,q \in Q} \big\{ \phi^p(x_{q}) + \frac 1 {\vert Q \vert} \phi(s(q)^{\vert Q \vert}) \big\} = \min_{q} \phi(x_q s(q)) = \phi(s(x))
\]

Finally, we have
\[
\vert \phi \vert_Q = \max_{p,q\in Q} \Big\vert \qval{\phi}\big( s(p)s(q)s(pq)^{-1} \big) - \qval{\phi}(p)-\qval{\phi}(q) + \qval{\phi}(pq) \Big\vert = \max_{p,q\in Q} \left\vert \phi(1)  \right\vert = 0
\]
which completes the proof
\end{proof}

Let us now investigate the key properties of the $Q$-value map \[\qval{\psi}  \colon \D(H) Q \to \oR\]

\begin{lem}
\label{ineq}
Let $\psi \in H^1(H;\R)$, $x,y \in \D(H)$, $q \in Q$, $w,z \in \D(H) Q$ be given.
All of the following hold:
\begin{enumerate}
\item $\qval{\psi}(x^{q^{-1}}) = \qval{\psi}(x)$;
\label{ineq 5}
 \item $\qval{\psi}(xq) = \qval{\psi}(q \cdot x) = \qval{\psi}(q) + \qval{\psi}(x)$;
  \label{ineq 1}
 \item $\qval{\psi}(xy) \geqslant \qval{\psi}(x) + \qval{\psi}(y)$;
 \label{ineq 2}
 \item $\qval{\psi}(x+y) \geqslant \min \{ \qval{\psi}(x), \qval{\psi}(y) \}$;
 \label{ineq 6}
 \item $\qval{\psi}(z+w)\geqslant \min \{ \qval{\psi}(z), \qval{\psi}(w) \}$;
  \label{ineq 3}
 \item $\qval{\psi}(zw) \geqslant \qval{\psi}(z) + \qval{\psi}(w) - \vert \psi\vert_Q$.
  \label{ineq 4}
\end{enumerate}
\end{lem}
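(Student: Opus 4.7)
I will verify the six items in order, relying on two preliminary observations. First, each $\psi^p\colon H\to\R$ extends by \cref{phi homom} (applied to $H$) to a group homomorphism $\D(H)\s-\{0\}\to\R$, so that $\psi^p(xy)=\psi^p(x)+\psi^p(y)$ on nonzero factors, and $\psi^p(x+y)\geqslant\min\{\psi^p(x),\psi^p(y)\}$. Second, the identity
\[
\psi^t(x^{q^{-1}}) = \psi^{tq}(x) \qquad (x\in\D(H),\ t,q\in Q)
\]
holds, proved exactly as in the computation $(\psi^q)^p=\psi^{qp}$ displayed just after the definition of $\psi^q$: write $s(t)s(q)=\mu(t,q)s(tq)$ and use that $\mu(t,q)\in H$ while $\psi$ kills commutators in $H$.

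Items \eqref{ineq 5}, \eqref{ineq 1}, and \eqref{ineq 2} follow immediately. For $x\in\D(H)$ only the identity coordinate of $x$ in $\D(H)Q$ is nonzero and $s(1)=1$, so $\qval{\psi}(x)=\min_p\psi^p(x)$; the identity above then gives $\min_p\psi^p(x^{q^{-1}})=\min_p\psi^{pq}(x)=\qval{\psi}(x)$ after the substitution $r=pq$, which is \eqref{ineq 5}. For \eqref{ineq 1}, direct use of the twisted multiplication \eqref{twisted conv} together with $\mu(q,1)=1$ shows that both $xq$ and $q\cdot x$ are supported at the single element $q\in Q$, with coefficients $x$ and $x^{q^{-1}}$ respectively; combining \eqref{ineq 5} with the formula $\qval{\psi}(q)=\tfrac{1}{|Q|}\psi(s(q)^{|Q|})$ (read off the definition) delivers the equalities. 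Item \eqref{ineq 2} comes from $\psi^p(xy)=\psi^p(x)+\psi^p(y)$ together with the general inequality $\min_p(a_p+b_p)\geqslant\min_p a_p+\min_p b_p$. Items \eqref{ineq 6} and \eqref{ineq 3} follow by applying $\psi^p(a+b)\geqslant\min\{\psi^p(a),\psi^p(b)\}$ on each $Q$-coordinate, noting that the shift $\tfrac{1}{|Q|}\psi(s(q)^{|Q|})$ in the definition of $\qval{\psi}$ depends only on the coordinate $q$ and so commutes with the minimum.

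The substantive content is \eqref{ineq 4}. Writing $z=\sum_q z_q q$ and $w=\sum_p w_p p$, I expand the twisted product as
\[
zw=\sum_{r\in Q}\Bigl(\sum_{qp=r}z_q\,w_p^{q^{-1}}\,\mu(q,p)\Bigr)r,
\]
apply \eqref{ineq 6} to the inner sum and then \eqref{ineq 2} to each triple product $z_q w_p^{q^{-1}}\mu(q,p)$ (noting $\mu(q,p)\in H\subseteq\D(H)$), and use the key identity to rewrite $\psi^t(w_p^{q^{-1}})$ as $\psi^{tq}(w_p)$. Decomposing
\[
\tfrac{1}{|Q|}\psi(s(qp)^{|Q|}) = \tfrac{1}{|Q|}\psi(s(q)^{|Q|})+\tfrac{1}{|Q|}\psi(s(p)^{|Q|})+\bigl(\qval{\psi}(qp)-\qval{\psi}(q)-\qval{\psi}(p)\bigr),
\]
the first two summands pair with $\psi^t(z_q)$ and $\psi^{tq}(w_p)$; taking the minima over $(t,q)$ and $(tq,p)$ respectively bounds these below by $\qval{\psi}(z)$ and $\qval{\psi}(w)$. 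Combined with $\psi^t(\mu(q,p))\geqslant\qval{\psi}(\mu(q,p))$, this yields the residual term
\[
\qval{\psi}(\mu(q,p))+\qval{\psi}(qp)-\qval{\psi}(q)-\qval{\psi}(p),
\]
whose absolute value is at most $|\psi|_Q$ by the very definition of the $Q$-defect, so the residual is at least $-|\psi|_Q$. I expect the main obstacle to be purely notational: keeping careful track of which index each minimisation ranges over, and ensuring that passing from $\psi^t(\mu(q,p))$ at a specific $t$ to the minimum $\qval{\psi}(\mu(q,p))$ is correctly combined with the other minimisations.
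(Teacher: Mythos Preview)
Your proposal is correct and follows essentially the same route as the paper. Two minor points: the inequality $\psi^p(x+y)\geqslant\min\{\psi^p(x),\psi^p(y)\}$ on $\D(H)$ does not follow from \cref{phi homom} alone (which only gives multiplicativity); it needs the common-denominator argument via the Ore description, which the paper carries out explicitly in item \eqref{ineq 6}. Second, your sketch of item \eqref{ineq 4} oscillates between two levels: you first say to apply \eqref{ineq 6} and \eqref{ineq 2}, which already passes to $Q$-values, but then speak of rewriting $\psi^t(w_p^{q^{-1}})$ and minimising over $(t,q)$ and $(tq,p)$, which is the calculation done before passing to $Q$-values. Either route works---the paper stays at the $\qval{\psi}$ level throughout, using \eqref{ineq 5} in place of your key identity and then separating $\min_{q,p}\{\qval{\psi}(z_q q)+\qval{\psi}(w_p p)\}$---but you should pick one and stick to it.
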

\begin{proof}
\begin{enumerate}
\item We have
\begin{align*}
\qval{\psi}(x^{q^{-1}}) &= \qval{\psi} \big(s(q) x s(q)^{-1} \big) \\
&= \min_{p \in Q} \psi^{pq^{-1}} \big(s(q) x s(q)^{-1}\big) \\
&= \min_{p} \psi\big(s(pq^{-1}) s(q) x s(q)^{-1}s(pq^{-1})^{-1}\big) \\
&= \min_{p} \psi\big( s(pq^{-1}) s(q) s(p)^{-1} s(p) x s(p)^{-1} s(p) s(q)^{-1} s(pq^{-1})^{-1}\big) \\
 &\overset{(\dagger)}= \min_{p} \psi\big(  s(p) x s(p)^{-1} \big) \\
&= \min_p \psi^p(x) \\
&= \qval{\psi}(x)
\end{align*}
 where $(\dagger)$ follows from the fact that $s(pq^{-1}) s(q) s(p) \in H \subseteq \D(H) \s- \{0\}$ and that $\psi$ restricted to $\D(H) \s- \{0\}$ is a homomorphism with abelian image, and so is stable on conjugacy classes.

 \item By definition,
 \[\qval{\psi}(xq)=  \min_p \big\{ \psi^p(x)\big\} + \frac 1 {\vert Q\vert} \psi\big(s(q)^{\vert Q\vert} \big) = \qval{\psi}(x) + \qval{\psi}(q) \]
 Also,
\[\qval{\psi}(q \cdot x)= \qval{\psi}(x^{q^{-1}} q) =   \qval{\psi}(x^{q^{-1}}) + \qval{\psi}(q) \overset{\eqref{ineq 5}}{=} \qval{\psi}(x) +\qval{\psi}(q) \]

 \item 
 We have
 \begin{align*}
  \qval{\psi}(xy)
  &= \min_p \psi^p(x y) \\
   &\overset{(\ddagger)}= \min_p \big( \psi^p(x)+ \psi^p(y) \big) \\
   &\geqslant \min_p \psi^p(x) + \min_p \psi^p(y) \\
   &= \qval{\psi}(x) + \qval{\psi}(y)
 \end{align*}
 where $(\ddagger)$ follows from \cref{phi homom}.

\item 
The Ore condition tells us that we may write $x = s_\phi(x') s_\phi(d)^{-1}$ and $y = s_\phi(y') s_\phi(d)^{-1}$ for $x',y',d \in {\D(K)} \fab{G}$. Thus we have
\begin{align*}
\qval{\psi}(x+y) &= \min_p \psi^p(x + y) \\
&= \min_p \{ \psi^p(x' + y') - \psi^p(d) \} \\
&\overset{(\dagger \dagger)}{\geqslant}  \min_p \min \{ \psi^p(x') - \psi^p(d), \psi^p(y') - \psi^p(d) \} \\
&=  \min_p \min \{ \psi^p(x), \psi^p(y) \} \\
  &=  \min\{ \psi^p(x), \psi^p(y) \mid  p \in Q\} \\
 &= \min \{\min_p \psi^p(x), \min_p \psi^p(y) \} \\
 &= \min \{\qval{\psi}(x), \qval{\psi}(y) \}
\end{align*}
where $(\dagger \dagger)$ follows from \cref{properties of nivikov}\eqref{pon item 1}.

\item We write $z = \sum_p z_p p$ and $w = \sum_p w_p p$ with $z_p,w_p \in \D(H)$. Now, $\qval{\psi}(z+w)$ is defined as the minimum over $p\in Q$ of
\begin{align*}
\qval{\psi}\big((z_p +w_p)p\big)
&\overset{\eqref{ineq 1}}{=} \qval{\psi}(z_p +w_p) + \qval{\psi}(p) \\
&\overset{\eqref{ineq 6}}\geqslant \min \{\qval{\psi}(z_p) , \qval{\psi}(w_p) \} + \qval{\psi}(p) \\
&\overset{\eqref{ineq 1}}{=}  \min \{\qval{\psi}(z_p p) , \qval{\psi}(w_p p)\} \\
&\overset{\phantom{(4)}}{\geqslant} \min \{\qval{\psi}(z) , \qval{\psi}(w)\}
\end{align*}

\item Again, we write $z = \sum_p z_p p$ and $w = \sum_p w_p p$. Now, for every $p,p'\in Q$ we have
\begin{align*}
\qval{\psi}(z_p p \cdot w_{p'} p') &= \qval{\psi}(z_p {w_{p'}}^{ p^{-1}} s(p)s(p')s(pp')^{-1} pp')\\
 &\overset{\eqref{ineq 1}}{=} \qval{\psi}(z_p {w_{p'}}^{ p^{-1}} s(p)s(p')s(pp')^{-1}) + \qval{\psi}( pp')
\end{align*}
 Observe that $s(p)s(p')s(pp')^{-1} \in H$ and hence we may continue:
\begin{align*}
\qval{\psi}(z_p p \cdot w_{p'} p')
  &\overset{\eqref{ineq 2}}{\geqslant}  \qval{\psi}(z_p)+ \qval{\psi} ({w_{p'}}^{p^{-1}})+ \qval{\psi}\big(s(p)s(p')s(pp')^{-1} \big) + \qval{\psi}(pp')\\
   &\geqslant  \qval{\psi}(z_p)+ \qval{\psi} ({w_{p'}}^{p^{-1}}) - \vert \psi \vert_Q + \qval{\psi}(p) +\qval{\psi}(p')\\
    &\overset{\eqref{ineq 5}}{=}  \qval{\psi}(z_p)+ \qval{\psi} (w_{p'})- \vert \psi \vert_Q + \qval{\psi}(p) +\qval{\psi}(p') \\
    &\overset{\eqref{ineq 1}}=  \qval{\psi}(z_p p)+ \qval{\psi} (w_{p'} p')- \vert \psi \vert_Q
\end{align*}
There exists $p''\in Q$ such that
\[\qval{\psi}(zw)= \qval{\psi}\big(\sum_{pp'=p''} z_p p \cdot w_{p'} p'\big)\]
 since $\qval{\psi}(zw)$
 is defined as a minimum of such terms. Now we have
 \begin{align*}
 \qval{\psi}\Big(\sum_{pp'=p''} z_p p \cdot w_{p'} p'\Big) &\overset{\eqref{ineq 3}}{\geqslant} \min_{pp'=p''} \qval{\psi} ( z_p p \cdot w_{p'} p') \\
 &\geqslant \min_{pp'=p''}\big( \qval{\psi}(z_p p)+ \qval \psi (w_{p'} p')- \vert \psi \vert_Q \big)\\
 &= \min_{pp'=p''}\big( \qval{\psi}(z_p p)  + \qval \psi (w_{p'}p') \big) - \vert \psi \vert_Q \\
  &\geqslant \min_{p} \qval{\psi}(z_p p) + \min_{p} \qval \psi (w_{p} p)  - \vert \psi \vert_Q \\
    &= \qval{\psi}(z) + \qval{\psi} (w)  - \vert \psi \vert_Q  \qedhere
 \end{align*}
\end{enumerate}
\end{proof}

We are now ready for the key lemma which will allow us to construct inverses for elements in $\D(H,S)Q$.

\begin{dfn}
For every $\psi \in H^1(H;\R)$ we set $\psi^Q = \{ \psi^q \mid q \in Q\}$.
\end{dfn}

\begin{lem}
 \label{key lem}

 Let $\psi \in H^1(H;\R)$.
 Let $x,y \in \D(H,\psi^Q)Q$ be given. Suppose that $x$ is invertible in $\D(H,\psi^Q)Q$ with inverse $x^{-1}$. Suppose further that we have
\[
\qval{\psi}(y) + \qval{\psi}(x^{-1}) - 2\vert \psi \vert_Q >0
\]
Then the inverse of $s(x+y)$ in $\D(G)$ exists and lies in $s(\D(H,\psi)Q)$.
\end{lem}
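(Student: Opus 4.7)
The plan is to invert $s(x+y)$ via a formal geometric series. Write $x+y = x(1+x^{-1}y)$ and consider the candidate $(x+y)^{-1} = \sum_{n \geqslant 0}(-x^{-1}y)^n x^{-1}$. The partial sums $S_N = \sum_{n=0}^N(-x^{-1}y)^n x^{-1}$ lie in $\D(H,\psi^Q)Q$ (which is closed under multiplication because $\psi^Q$ is $Q$-invariant, and contains $x^{-1}$ by hypothesis), and satisfy the telescoping identity $S_N(x+y) = 1-(-x^{-1}y)^{N+1}$. Iterating part~(6) of \cref{ineq} inductively yields $\qval{\psi}\big((-x^{-1}y)^{n}\big) \geqslant n\big(\qval{\psi}(x^{-1})+\qval{\psi}(y) - 2\vert\psi\vert_Q\big) + \vert\psi\vert_Q$, which diverges to $\infty$ by the hypothesis $\qval{\psi}(y)+\qval{\psi}(x^{-1}) - 2\vert\psi\vert_Q > 0$.

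Next I would verify that $x+y \neq 0$ in $\D(H)Q$: if $y = -x$ then $\qval{\psi}(y) = \qval{\psi}(x)$, and part~(6) of \cref{ineq} applied to $x \cdot x^{-1} = 1$ forces $\qval{\psi}(x)+\qval{\psi}(x^{-1}) \leqslant \vert\psi\vert_Q$, contradicting the hypothesis. So $s(x+y)$ is invertible in the skew-field $\D(G)$; let $z$ be its inverse. Multiplying the telescoping identity by $s^{-1}(z)$ on the right gives $s^{-1}(z) = S_N + R_N$ in $\D(H)Q$, where $R_N = (-x^{-1}y)^{N+1} s^{-1}(z)$ satisfies $\qval{\psi}(R_N) \to \infty$, again by part~(6) of \cref{ineq}.

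The substance of the argument is then to check that each coefficient $z_q$ of $s^{-1}(z) = \sum_q z_q q$ lies in $\D(H,\psi)$. Writing $S_N = \sum_q S_{N,q} q$ and $R_N = \sum_q r_{N,q} q$, we have $z_q = S_{N,q}+r_{N,q}$, with $S_{N,q} \in \D(H,\psi^Q) \subseteq \D(H,\psi)$ and $\psi(r_{N,q}) \to \infty$ (extracted from $\qval{\psi}(R_N) \to \infty$ by taking $p=1$ in the defining minimum). Applying $\iota_\psi$ coefficientwise to the partial sums, the successive differences $\iota_\psi(S_{N,q}) - \iota_\psi(S_{N-1,q})$ are elements of $\novq H \psi$ whose $\psi$-values tend to $\infty$ (since $\iota_\psi$ preserves $\psi$), so \cref{novikov series} produces a limit $\sigma_q \in \novq H \psi$ of the sequence $\iota_\psi(S_{N,q})$.

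The main obstacle is the concluding step: showing that $i_\leqslant(z_q) = j_\psi(\sigma_q)$ for every biordering $\leqslant$ on $\fab H$ compatible with $\psi$. Both elements lie in $\F_\leqslant(H)$, and on any $\psi$-sublevel set $\psi^{-1}((-\infty,\kappa])$, for $N$ large enough the supports of both $i_\leqslant(r_{N,q})$ and $j_\psi(\sigma_q - \iota_\psi(S_{N,q}))$ avoid that sublevel set. The coefficient identity $i_\leqslant(z_q) = j_\psi(\iota_\psi(S_{N,q})) + i_\leqslant(r_{N,q})$ therefore collapses to $i_\leqslant(z_q) = j_\psi(\sigma_q)$ on the sublevel set, and letting $\kappa \to \infty$ yields global equality in $\F_\leqslant(H)$. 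Since $\sigma_q$ was constructed independently of $\leqslant$, this shows $i_\leqslant(z_q) \in j_\psi(\novq H \psi)$ for every compatible $\leqslant$, whence $z_q \in \D(H,\psi)$ and $s^{-1}(z) \in \D(H,\psi)Q$, as required.
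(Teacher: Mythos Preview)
Your proof is correct and follows the same geometric-series strategy as the paper's. The only difference is in the final comparison step: the paper packages the argument using the right $\D(H)Q$-module structure on $\F_\leqslant(H)Q$ and an algebraic cancellation (from $\big(j_\psi(z') - i_\leqslant(zx)\big)\cdot(1+x^{-1}y) = 0$), whereas you carry out the comparison coefficient-by-coefficient via a direct sublevel-set argument in $\F_\leqslant(H)$ --- both executions are valid and yield the same conclusion.
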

\begin{proof}
Note that  the map $s \colon \D(H) Q \to \D(G)$ is an isomorphism by \cref{DHQ is DG}.

 We start by remarking that $x + y\neq 0$, as otherwise we would have (using \cref{ineq}(\ref{ineq 4}))
 \[\qval{\psi}(y) + \qval{\psi}(x^{-1}) = \qval{\psi}(-x) + \qval{\psi}(x^{-1}) \leqslant \qval{\psi}(-1) + \vert \psi \vert_Q = \vert \psi \vert_Q\]
which would contradict the assumed inequality.
Therefore $s(x+y)$ is invertible in $\D(G)$, with inverse $s(z)$ where $z = \sum_{q\in Q} z_q q$ with $z_q \in \D(H)$. Our goal is to show that in fact every $z_q$ is representable in $\novq H \psi$.
\smallskip

We have
\[
 \qval{\psi}(x^{-1}y) \geqslant \qval{\psi}(y) + \qval{\psi}(x^{-1}) - \vert \psi \vert_Q > \vert \psi \vert_Q
\]
by \cref{ineq}\eqref{ineq 4} and by assumption.
Let us quantify this inequality: there exists $\epsilon>0$ such that
\[
 \qval{\psi}(x^{-1}y) \geqslant \vert \psi \vert_Q + \epsilon
\]
Now repeated application of \cref{ineq}\eqref{ineq 4} tells us that for every $i > 0$ we have
\[
\qval{\psi}\big( (-x^{-1}y)^i \big) = \qval{\psi}\big( (x^{-1}y)^i \big) \geqslant \vert \psi \vert_Q + i\epsilon
\]

As $(-x^{-1}y)^i \in \D(H) Q$, we may write $(-x^{-1}y)^i = \sum_{q \in Q} w_{i,q} q$ with $w_{i,q} \in \D(H)$. Now
\[
 i \epsilon \leqslant \qval{\psi}\big( (-x^{-1}y)^i\big) = \min_{p,q \in Q} \{ \psi^p(w_{i,q}) + \frac 1 {\vert Q\vert} \psi(s(q)^{\vert Q \vert}) \} \leqslant \psi(w_{i,q}) + \frac 1 {\vert Q\vert} \psi(s(q)^{\vert Q \vert})
\]
for every $q \in Q$.
Since $Q$ is finite and $\psi$ is fixed, this implies that the sequence $(\psi(w_{i,q}))_i$ converges to $\infty$ for every $q \in Q$.
Since $x^{-1}y \in \D(H,\psi^Q)Q$, we have
\[\iota_\psi(w_{i,q}) \in \novq H \psi\]
which together with the previous observation yields the convergence of the series
\[
 \sum_{q\in Q}  \left( \sum_{i=0}^\infty  \iota_\psi(w_{i,q}) \right) q
\]
in $\novq H \psi Q$ (compare \cref{novikov series}); we denote the limit by $z'$.
(We will show later that $z'$ is the element representing $zx$ in $\novq H \psi Q$.)

\smallskip
Let us pick a biordering $\leqslant$ on $\fab{H}$ compatible with $\psi$. By definition, we have
\[j_\psi(z') \in \F_\leqslant(H) Q \]
(this is a slight abuse of notation, as $j_\psi$ is really defined as a map $\novq H \psi \to \F_\leqslant(H)$; here we extend it by the identity on $Q$; we will similarly abuse notation for $i_\leqslant$).

We have already observed that $\F_\leqslant(H) Q$ is a right $\D(H) Q$-module.
We claim that in the module $\F_\leqslant(H) Q$ we have
\[j_\psi(z') \cdot (1 + x^{-1}y) = 1\]
Indeed, this can be seen by taking the partial sums
\[
 j_\psi\left( \sum_{q\in Q}  \left( \sum_{i=0}^m  \iota_\psi(w_{i,q}) \right) q   \right) = \sum_{q\in Q}  \left( \sum_{i=0}^m  i_\leqslant(w_{i,q}) \right) q =  i_\leqslant\left( \sum_{i=0}^m  (-x^{-1}y)^i\right)
\]
and multiplying them by $1 + x^{-1}y$ on the right. Observing that the module action happens via the map $i_\leqslant$, we obtain
\begin{align*}
 i_\leqslant\left( \sum_{i=0}^m (-x^{-1}y)^i \right) \cdot (1 + x^{-1}y) &= i_\leqslant\left( \sum_{i=0}^m (-x^{-1}y)^i (1 + x^{-1}y)  \right) \\
 &= i_\leqslant \big( 1- (-x^{-1}y)^{m+1} \big) \\
 &= 1 - i_\leqslant \big( (-x^{-1}y)^{m+1} \big) \\
  &= 1 - i_\leqslant\left( \sum_{q\in Q} w_{m+1,q} q \right)
\end{align*}
We have already shown that $\psi\left( \sum_{q\in Q} w_{m+1,q} q \right)$ tends to $\infty$ with $m$, and hence
\[
 \lim_{m \longrightarrow \infty} \iota_\psi \left( \sum_{q\in Q} w_{m+1,q} q \right) = 0
\]
Therefore we also have
\[
 \lim_{m \longrightarrow \infty} j_\psi \iota_\psi \left( \sum_{q\in Q} w_{m+1,q} q \right) = 0
\]
But $j_\psi \iota_\psi = i_\leqslant$ and so
\begin{align*}
 j_\psi(z') \cdot (1 + x^{-1}y) &= \lim_{m \longrightarrow \infty}  i_\leqslant\left( \sum_{i=0}^m \left(-x^{-1}y\right)^i \right) \cdot (1 + x^{-1}y) \\
 &= 1 - \lim_{m \longrightarrow \infty} i_\leqslant\left( \sum_{q\in Q} w_{m+1,q} q \right) \\
 &= 1
\end{align*}

as claimed.

\smallskip

Recall that we have $z \in \D(H)Q$ which is an inverse of $x+y$ in $\D(H)Q$.
 We have also
\[
 {i_\leqslant}(z x) \cdot (1 + x^{-1}y) = {i_\leqslant}\big(z(x+y)\big) = 1
\]
We conclude that $\big(j_\psi(z')-{i_\leqslant}(z x)\big) \cdot (1 + x^{-1}y) = 0$. But then
\[
 j_\psi(z')-{i_\leqslant}(z x) = \big(j_\psi(z')-{i_\leqslant}(z x)\big) \cdot \big( (1 + x^{-1}y) \cdot zx \big)
= 0
\]
and so ${i_\leqslant}(zx) \in \im(j_\psi)$. But this is the precise meaning of $zx$ being representable in $\novq H \psi Q$. Since $x^{-1} \in \D(H,\psi^Q)Q$, we conclude that $z$ is representable in $\novq H \psi Q$. 
\end{proof}

\section{RFRS}

\subsection{Generalities}

\begin{dfn}[RFRS]
\label{rfrs def}
A group $G$ is called \emph{residually finite rationally solvable} or \emph{RFRS}  \iff there exists a sequence $(H_i)_{i \in \N}$ of finite index normal subgroups $H_i \lhdslant G$ with $H_0=G$ such that
\begin{itemize}
\item $H_{i+1} \leqslant H_i$ (that is, $(H_i)_i$ is a \emph{chain}), and
 \item $\bigcap H_i = \{1\}$ (that is, $(H_i)_i$ is a \emph{residual chain}), and
 \item $\ker \alpha_i \leqslant H_{i+1}$ for every $i$, where $\alpha_i \colon H_i \to \fab{H_i}$ is the free abelianisation map.
\end{itemize}
The sequence $(H_i)_i$ is called a \emph{witnessing chain}, and it always comes equipped with set-theoretic sections $s_i \colon Q_i = G/H_i \to G$ of the quotient maps $\beta_i \colon G \to Q_i$ such that $s_i(1) = 1$.
\end{dfn}

\begin{prop}
\label{rfrs atiyah}
Every RFRS group $G$ is torsion-free and satisfies the Atiyah conjecture.
\end{prop}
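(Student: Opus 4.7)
My plan is to extract both conclusions directly from the witnessing chain $(H_i)_{i \in \N}$. For torsion-freeness I would argue by contradiction: if $g \in G \s- \{1\}$ has finite order, then since $\bigcap_i H_i = \{1\}$ there is a largest $i$ with $g \in H_i$, and then $g \notin H_{i+1} \supseteq \ker \alpha_i$, so $\alpha_i(g) \in \fab{H_i}$ is non-trivial. But $\fab{H_i}$ is torsion-free abelian, forcing $\alpha_i(g)$ to have infinite order, whereas as the image of the torsion element $g$ it must itself be torsion.

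For the Atiyah conjecture I would invoke Schick's theorem~\cite{Schick2002}, which yields the (strong) Atiyah conjecture for residually \{torsion-free solvable\} groups; it therefore suffices to verify that $G$ lies in this class. To this end I would introduce the \emph{rational derived series} $G^{(0)} = G$, $G^{(i+1)} = \ker\!\big(G^{(i)} \to \fab{G^{(i)}}\big)$. Each factor $G^{(i)}/G^{(i+1)} = \fab{G^{(i)}}$ is torsion-free abelian, and a routine induction (using that an extension of a torsion-free group by a torsion-free group is torsion-free) shows that every quotient $G/G^{(i)}$ is torsion-free and solvable.

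The only step requiring genuine work is to establish $\bigcap_i G^{(i)} = \{1\}$, which I would do by an inductive comparison with the RFRS chain, namely $G^{(i)} \leqslant H_i$ for all $i$. The base case is trivial; for the inductive step, assuming $G^{(i)} \leqslant H_i$, the composite
\[
 G^{(i)} \into H_i \xrightarrow{\ \alpha_i\ } \fab{H_i}
\]
takes values in a torsion-free abelian group, so by the universal property of the free abelianisation (any homomorphism to a torsion-free abelian group factors through $G^{(i)} \to \fab{G^{(i)}}$) its kernel contains $G^{(i+1)}$, giving
\[
 G^{(i+1)} \leqslant \ker \alpha_i \cap G^{(i)} \leqslant H_{i+1}.
\]
Combining this with the residual condition $\bigcap_i H_i = \{1\}$ yields $\bigcap_i G^{(i)} = \{1\}$, as desired. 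The main (mild) obstacle is recognising the correct auxiliary tower and invoking the universal property; after that the argument is entirely formal, and the serious content is outsourced to Schick's theorem.
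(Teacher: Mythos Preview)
Your proof is correct and follows the same overall strategy as the paper: reduce to Schick's theorem~\cite{Schick2002} by showing that $G$ is residually \{torsion-free solvable\}. The only difference is in the auxiliary chain: the paper sets $K_i = \ker \alpha_i$ and checks directly that $(K_i)_i$ is a descending residual chain of normal subgroups of $G$ with torsion-free abelian successive quotients $K_i/K_{i+1} \leqslant H_{i+1}/K_{i+1} = \fab{H_{i+1}}$, whereas you use the intrinsic rational derived series $(G^{(i)})_i$ and trap it inside the RFRS chain via the inductive comparison $G^{(i)} \leqslant H_i$. Both arguments are equally short and formal; your version has the mild advantage that normality of the $G^{(i)}$ in $G$ is automatic (they are characteristic), and the quotients $G/G^{(i)}$ are torsion-free solvable by construction, so no separate check is needed. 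Your standalone torsion-freeness argument is a minor extra --- the paper simply reads it off from being residually \{torsion-free\}.
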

\begin{proof}
We will show that $G$ is residually \{torsion-free solvable\}. This will suffice, as residually torsion-free groups are themselves torsion free, and residually \{torsion-free solvable\} groups satisfy the Atiyah conjecture by \cite{Schick2002}*{Proposition 1 and Theorem 1}.

Let us now proceed with the proof.
Set $K_i = \ker \alpha_i \lhdslant H_i$.
Since $K_i \leqslant H_i$, it is immediate that $\bigcap K_i = \{1\}$. We have $K_i \lhdslant H_{i+1}$ by the definition of RFRS.
Further, we have $K_{i+1} \leqslant K_i$
since $K_{i+1}$ is the intersection of the kernels of all homomorphisms $H_{i+1} \to \Q$, and every homomorphism $H_i \to \Q$ restricts to a homomorphism $H_{i+1} \to \Q$.
The group $K_{i}/K_{i+1}$ is torsion-free abelian since $H_{i+1}/K_{i+1}$ is. Therefore $(K_i)_{i\in \N}$ is a residual chain witnessing the fact that $G$ is residually \{torsion-free solvable\}.
\end{proof}

\subsection{Linnell's skew-field of finitely generated RFRS groups}

Let $G$ be a finitely generated RFRS group with witnessing chain $(H_i)_{i\in \N}$ and finite quotients $\beta_i \colon G \to Q_i$ with kernels $H_i$ and sections $s_i \colon Q_i \to G$.

Note that $H^1(H_i;\R) = \mathrm{Hom} (H_i,\R)$ is naturally a subspace of $H^1(H_j;\R)$ for every $j \geqslant i$.


\begin{dfn}[Rich and very rich subsets]
 A character $\psi \in H^1(H_n;\R)$ is \emph{irrational} \iff it is injective on $\fab{H_n}$. Note that characters irrational in $H^1(H_n;\R)$ are not necessarily irrational in $H^1(H_{n+1};\R)$.

Let $\mathcal{L} = \{L_{-m} ,  \dots , L_{-1} \}$ denote a finite chain of linear subspaces of $H^1(G;\R)$ with $L_{-m} < \dots < L_{-1}$. We endow each $L_i$ with a fixed dense subset; we will refer to the characters in this subset as \emph{irrational}. The dense subsets of irrational characters of the subspaces $L_i$ are part of the structure of $\mathcal L$.

Suppose that $\mathcal L$ is fixed (it might be empty).
For notational convenience we set $L_i = H^1(H_i;\R)$ for $i\geqslant 0$.
A subset of $L_i$ (for $i \geqslant -m$) is \emph{very rich} \iff it is open and contains all irrational characters in $L_i$.

Now we define a notion of a rich set inductively: a subset of $L_{-m}$ is \emph{rich} \iff it is very rich. For $i>-m$, a subset $U$ of $L_i$ is \emph{rich} \iff ${\overline U}^\circ \cap L_{i-1}$ is rich in $L_{i-1}$, where ${\overline U}^\circ$ denotes the interior of the closure of $U$, all taken in $L_i$.
\end{dfn}

Clearly, every very rich subset is also rich.

Note that both notions of richness are relative to the chosen witnessing chain of $G$, and the chain $\mathcal{L}$.
The witnessing chain is fixed throughout; we also fix $\mathcal{L}$.

The reason for introducing $\mathcal L$ will became apparent in \cref{KHn in KG}. Note that we do not require the fixed dense sets inside of $L_{-i}$ to satisfy any additional properties -- these dense sets will be coming from the intrinsic notion of irrationality in linear subspaces of $H^1(G;\R)$, and they are in general not the intersections of the subspaces $L_{-i}$ with the irrational characters in $H^1(G;\R)$.

\begin{lem}
\label{rich rmk}
 The intersection of two (and hence finitely many) rich subsets of $L_i$ is rich. Thus, every rich subset of $H^1(H_i;\R)$ contains a rich subset which is $Q_i$-invariant.
\end{lem}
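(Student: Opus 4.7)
The plan is to prove the principal statement (closure under finite intersection) by induction on $i+m$, the ``depth'' below the base level $L_{-m}$; the second statement on $Q_i$-invariance will follow as a formal consequence. For the base case $i=-m$, rich simply means very rich, i.e.\ open and containing every irrational character of $L_{-m}$. A finite intersection of open subsets of $L_{-m}$ is open, and if two subsets each contain every irrational character of $L_{-m}$ then so does their intersection; hence the intersection is very rich.

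For the inductive step, let $U, V \subseteq L_i$ both be rich. By the defining property, $\overline U^\circ \cap L_{i-1}$ and $\overline V^\circ \cap L_{i-1}$ are both rich in $L_{i-1}$, so by the inductive hypothesis their intersection $\overline U^\circ \cap \overline V^\circ \cap L_{i-1}$ is rich in $L_{i-1}$. What I need to conclude is that $\overline{U \cap V}^\circ \cap L_{i-1}$ is rich as well. The key topological observation is that for any two subsets $A,B$ of a topological space one has $\overline{A \cap B}^\circ \subseteq \overline A^\circ \cap \overline B^\circ$, with equality whenever $A$ and $B$ are \emph{regular open} (that is, satisfy $A = \overline A^\circ$), since the intersection of two regular open sets is itself regular open. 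Combined with the observation that the richness of a subset $W$ of $L_i$ depends only on its regular open hull $\overline W^\circ$---because the recursive definition never examines $W$ itself, only $\overline W^\circ$---this allows the standard reduction to the regular open case and completes the induction.

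For the second statement, the quotient $Q_i = G/H_i$ acts on the normal subgroup $H_i$ by outer automorphisms induced by conjugation in $G$; this descends to a linear $Q_i$-action on $L_i = H^1(H_i;\R)$ preserving every subspace $L_j$ with $j \leqslant i$ as well as each chosen dense set of irrational characters, and therefore preserves the notion of richness. Given a rich subset $U \subseteq L_i$, the finite intersection
\[
\bigcap_{q \in Q_i} q \cdot U
\]
is contained in $U$, is manifestly $Q_i$-invariant, and is a finite intersection of rich subsets, hence rich by the first part of the lemma.

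The only subtle point is the reduction to regular open sets during the inductive step: rich sets need not themselves be open, so one must justify that passing from $U, V$ to their regular open hulls $\overline U^\circ, \overline V^\circ$ does not alter the richness question for $U \cap V$. Once this is in place, the rest of the argument is just a formal iteration of the two topological identities ``finite intersections of opens are open'' and ``$\overline{A \cap B}^\circ = \overline A^\circ \cap \overline B^\circ$ for regular open $A, B$''.
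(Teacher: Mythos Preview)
Your inductive step has a genuine gap at exactly the point you flag. You correctly observe that richness of $W$ in $L_i$ (for $i>-m$) depends only on $\overline W^\circ$, but this does \emph{not} let you replace $U,V$ by their regular open hulls when testing richness of $U\cap V$: what you need is $\overline{U\cap V}^\circ = \overline U^\circ \cap \overline V^\circ$, and for non-open $U,V$ you only have the inclusion $\subseteq$, which goes the wrong way. In fact the lemma is false in the generality you are attempting. Take $L_1=\R^2$ with $L_0$ the $x$-axis (and any dense set of irrationals in $L_0$), and set
\[
U=\{(x,y):y\in\mathbb Q\},\qquad V=\{(x,y):y\notin\mathbb Q\}.
\]
Then $\overline U^\circ=\overline V^\circ=\R^2$, so $\overline U^\circ\cap L_0=L_0$ is very rich and both $U$ and $V$ are rich in $L_1$; but $U\cap V=\varnothing$, whose regular hull is $\varnothing$, hence not rich. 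So no ``reduction to the regular open case'' can repair the argument.

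The paper's proof sidesteps this by writing ``let $V$ and $V'$ be two \emph{open} subsets of $L_i$'', i.e.\ it silently assumes openness (which is harmless, since every rich set actually produced or used elsewhere in the paper is open). With that assumption the identity $\overline{V\cap V'}^\circ=\overline V^\circ\cap\overline{V'}^\circ$ holds and the induction goes through directly. The clean fix is either to add ``open'' to the recursive clause of the definition of \emph{rich}, or to state and prove the lemma only for open rich subsets; your argument for the $Q_i$-invariant part is then fine as written.
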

\begin{proof}
Clearly the intersection of two very rich sets is itself very rich. Thus the statement holds for $i=-m$. We now proceed by induction: Let $V$ and $V'$ be two open subsets of $L_i$ such that ${\overline V}^\circ \cap L_{i-1}$ and  ${\overline {V'}}^\circ \cap L_{i-1}$ are rich. By the inductive hypothesis, ${\overline V}^\circ \cap {\overline {V'}}^\circ \cap L_{i-1}$ is also rich. It is an easy exercise in point-set topology to show that for open sets $V, V'$ we have
\[
 {\overline V}^\circ \cap {\overline {V'}}^\circ = {\overline {V \cap V'}}^\circ
\]
This finishes the proof of the first part of the statement.

The second part of the statement follows by intersecting all rich sets in a $Q_i$-orbit of a given rich set.
\end{proof}

\begin{lem}
 \label{same irrationals}
 Let $U$ and $V$ be two open subsets of $L_i$ such that the intersections of $U$ and $V$ with the set of irrational characters of $L_i$ coincide. If $U$ is rich, then so is $V$.
\end{lem}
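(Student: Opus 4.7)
The plan is to reduce the lemma to a single topological observation: if $U$ and $V$ are open subsets of a topological space and $D$ is a dense subset with $U\cap D=V\cap D$, then $\overline U=\overline V$. Indeed, $U\cap D$ is dense in $U$ (since $D$ is dense and $U$ is open), so $U\subseteq \overline{U\cap D}=\overline{V\cap D}\subseteq \overline V$; by symmetry $\overline U=\overline V$, and hence $\overline U^\circ=\overline V^\circ$.

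To apply this, I first need to know that the set $D\subseteq L_i$ of irrational characters is dense. For $i\geqslant 0$, this is because the non-irrational characters in $H^1(H_i;\R)$ are those vanishing on some non-trivial element of $\fab{H_i}$, i.e.\ a countable union of proper linear hyperplanes, whose complement is dense. For $-m\leqslant i\leqslant -1$, density is built into the hypothesis on $\mathcal L$.

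Now I split on the base case. If $i=-m$, richness just means very richness, so $U$ being rich says $U$ is open and contains every irrational character of $L_{-m}$; since $V$ is open and $V\cap D=U\cap D=D$, the set $V$ is very rich as well. For $i>-m$, the inductive definition of richness says that $U$ being rich in $L_i$ means $\overline U^\circ\cap L_{i-1}$ is rich in $L_{i-1}$. By the topological observation above, $\overline U^\circ=\overline V^\circ$, so $\overline U^\circ\cap L_{i-1}=\overline V^\circ\cap L_{i-1}$, and the latter is rich in $L_{i-1}$. Hence $V$ is rich in $L_i$.

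The only potential subtlety is making sure the density argument is genuinely available at every level of the induction; but since both flavours of ``irrational'' produce a dense set in $L_i$ by the previous paragraph, no further work is needed and the proof is complete.
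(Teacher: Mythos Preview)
Your proof is correct and follows essentially the same approach as the paper: both rest on the observation that for open $U$, $V$ agreeing on a dense set $D$ one has $\overline U=\overline V$, whence $\overline U^\circ=\overline V^\circ$ and richness transfers. You have simply been more explicit than the paper in separating out the base case $i=-m$ and in verifying that the irrational characters are indeed dense in each $L_i$.
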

\begin{proof}
 Since $U$ is open and the set of irrational characters is dense, the closure $\overline U$ is equal to the closure of the intersection of the two. Hence $\overline U = \overline V$, and the result follows.
\end{proof}

\begin{lem}
\label{closure of rich}
Suppose that $\mathcal{L} = \emptyset$.
If $U$ is a rich subset of $H^1(H_n;\R)$, then its closure in $H^1(H_n;\R)$ contains $H^1(G;\R)$.
\end{lem}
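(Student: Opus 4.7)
The plan is to prove the lemma by induction on $n \geqslant 0$. Since $\mathcal{L} = \emptyset$, the recursion defining richness bottoms out at $L_0 = H^1(G;\R)$: a rich subset of $L_0$ is by definition the same as a very rich one, namely an open subset containing every character of $G$ that is injective on $\fab{G}$. For the inductive step, richness of $U \subseteq L_n$ supplies the rich set $V := \overline{U}^\circ \cap L_{n-1}$ in $L_{n-1}$, to which the inductive hypothesis will be applied.

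For the base case $n = 0$ I will use that $\fab{G}$ is a finitely generated free abelian group (since $G$ is finitely generated), so $H^1(G;\R)$ is a finite-dimensional real vector space and the set of characters failing to be injective on $\fab{G}$ is a countable union of proper linear subspaces, hence meagre. Its complement, the irrational characters, is therefore dense, and since $U$ is open and contains this complement, I conclude that $\overline{U} = H^1(G;\R)$.

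For the inductive step, assuming the statement for $n-1$, I take a rich $U \subseteq L_n$ and set $V = \overline{U}^\circ \cap L_{n-1}$, which is rich in $L_{n-1}$ by definition. The inductive hypothesis then says that the closure of $V$ taken inside $L_{n-1}$ already contains $H^1(G;\R)$. Since the closure of a set in a subspace is always contained in its closure in the ambient space, the closure of $V$ inside $L_n$ also contains $H^1(G;\R)$. Combined with $V \subseteq \overline{U}$, this yields $H^1(G;\R) \subseteq \overline{V} \subseteq \overline{U}$ with all closures taken in $L_n$, completing the induction.

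No genuine obstacle is expected: the recursive definition of richness has been tailored precisely so that this kind of induction runs smoothly, and the only non-formal input is density of the injective homomorphisms $\fab{G} \to \R$ inside $H^1(G;\R)$, which is immediate from finite generation of $\fab{G}$.
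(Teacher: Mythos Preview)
Your proof is correct and follows essentially the same route as the paper's own argument: induction on $n$, with the base case using density of the irrational characters in $H^1(G;\R)$, and the inductive step passing through $V = \overline{U}^\circ \cap L_{n-1}$ and the inclusion $\overline{V} \subseteq \overline{U}$. The only difference is cosmetic: you spell out why irrational characters are dense (complement of a countable union of proper hyperplanes), whereas the paper takes this for granted.
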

\begin{proof}
The proof is an induction on $n$. When $n=0$ the set $U$ is in fact very rich, and hence is dense in $H^1(G;\R)$.

Now suppose that $U$ is an open subset of $H^1(H_n;\R)$ such that ${\overline U}^\circ \cap H^1(H_{n-1};\R)$ is rich.
The inductive hypothesis tells us that $H^1(G;\R)$ lies in the closure of ${\overline U}^\circ \cap H^1(H_{n-1};\R)$ taken in $H^1(H_{n-1};\R)$.  But this closure is clearly contained in $\overline{{\overline U}^\circ}$ taken in $H^1(H_n;\R)$, and obviously $\overline{{\overline U}^\circ} \subseteq \overline U$.
\end{proof}

\begin{dfn}[$\K(G,\mathcal L)$]
\label{K def}
An element  $x \in \D(G)$ is \emph{well representable} with \emph{associated integer} $n$ and \emph{associated rich sets} $\{U_i \mid i \geqslant n\}$ \iff for every $i \geqslant n$ the set $U_i$ is rich in $H^1(H_i;\R)$ and $x \in s_i ( \D(H_i,U_i) Q_i )$.

We denote the set of all well-representable elements in $\D(G)$ by $\K(G,\mathcal L)$. We write $\K(G) = \K(G,\emptyset)$.
\end{dfn}

\begin{rmk}
\label{KHn in KG}
Every $H_n$ is itself a RFRS group with witnessing chain $(H_i)_{i \geqslant n}$.
  Since we have $\D(H_n) \leqslant \D(G)$, it is immediate that every element in $\K(H_n,\mathcal L) $ is automatically an element of $\K(G, \mathcal L)$ when $\mathcal L$ is a chain of subspaces of $H^1(G;\R)$, since we may use the same associated integer and the same associated rich sets.

  Conversely, every element $x \in \K(G, \mathcal L) \cap  \D(H_n)$ lies in $\K(H_n, \mathcal L ') $,
  where
  \[\mathcal L' = \mathcal L \cup \{ H^1(H_j;\R) \mid 0 \leqslant j < n \} \]
  (and where the dense subset of irrationals in  every $H^1(H_j;\R)$ is the obvious one)
  since it is immediate that when we write $x = s_i\left( \sum_{q\in Q_i} x_q q \right)$ with $x_q \in \D(H_i,U)$ for $i \geqslant n$ and some rich set $U$, we have $x_q = 0$ for every $q \not\in \beta_i(H_n)$ (formally, this follows from the fact that $s_i \colon \D(H_i)Q_i \to \D(G)$ is an isomorphism, as shown in \cref{DHQ is DG}).
\end{rmk}

The above remark is the reason why we introduced $\mathcal L$: otherwise, we could not pass from $\K(G) \cap  \D(H_1)$ to $\K(H_1)$.

\begin{lem}
\label{KG contains QG}
For every $\mathcal L$, the subset
 $\K(G, \mathcal L)$ is a subring of $\D(G)$ containing $\Q G$.
\end{lem}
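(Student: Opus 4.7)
The plan is to verify that $\K(G, \mathcal L)$ contains $\Q G$ (which gives $0$ and $1$ for free) and is closed under subtraction and multiplication; the whole argument amounts to a bookkeeping exercise around the notion of a rich set.

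First I would handle the containment $\Q G \subseteq \K(G, \mathcal L)$. Given $x \in \Q G$, for each $i$ the isomorphism $s_i \colon (\Q H_i) Q_i \to \Q G$ from \cref{QHQ is QG} lets me write $x = s_i(\sum_q h_q q)$ with $h_q \in \Q H_i$. Any element of $\Q H_i$ has finite support, so it sits trivially in $\novq{H_i}{\psi}$ for every $\psi$, and the identifications through $j_\psi$ and $i_\leqslant$ show that $h_q$ is representable there; hence $h_q \in \D(H_i, H^1(H_i; \R))$. Since $H^1(H_i; \R)$ is open in $L_i$ and contains all irrational characters of $L_i$, it is very rich and in particular rich. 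So associating integer $0$ and rich sets $U_i = H^1(H_i; \R)$ places $x$ in $\K(G, \mathcal L)$.

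For closure under subtraction I would take $x, y \in \K(G, \mathcal L)$ with associated data $(n_x, \{U_i^x\})$ and $(n_y, \{U_i^y\})$, set $n = \max(n_x, n_y)$, and for $i \geq n$ intersect to get $U_i = U_i^x \cap U_i^y$. By \cref{rich rmk} the intersection $U_i$ is still rich, and by \cref{making DHU Qequiv} both $\D(H_i, U_i^x)$ and $\D(H_i, U_i^y)$ embed into $\D(H_i, U_i)$. Writing $x = s_i(\sum_q x_q q)$ and $y = s_i(\sum_q y_q q)$, all coefficients then live in the ring $\D(H_i, U_i)$ (\cref{DHU is a ring}), so the coefficient-wise difference does too, witnessing $x - y \in \K(G, \mathcal L)$ with associated rich sets $\{U_i\}$.

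The main obstacle is multiplication, because the twisted convolution in $\D(H_i) Q_i$ conjugates coefficients by section elements $s(q)$, and without $Q_i$-invariance the subring $\D(H_i, U_i)$ need not be preserved by this conjugation. To get around this I would invoke the second clause of \cref{rich rmk} to produce, inside each $U_i = U_i^x \cap U_i^y$, a rich subset $V_i$ that is $Q_i$-invariant. Then $\D(H_i, V_i)$ is itself $Q_i$-invariant by the discussion preceding \cref{key lem}, so $\D(H_i, V_i) Q_i$ becomes a genuine twisted subring of $\D(H_i) Q_i$. By \cref{making DHU Qequiv} once more, both $x$ and $y$ lie in $s_i(\D(H_i, V_i) Q_i)$, and hence so does $xy$; thus $xy \in \K(G, \mathcal L)$ with associated rich sets $\{V_i\}$.
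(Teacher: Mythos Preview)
Your proof is correct and follows essentially the same approach as the paper: pass to a common $Q_i$-invariant rich set via \cref{rich rmk} so that $\D(H_i,U_i)Q_i$ is a genuine ring, and take $U_i = H^1(H_i;\R)$ for elements of $\Q G$. The paper is terser, handling addition and multiplication in one stroke by going straight to $Q_i$-invariant sets, whereas you observe (correctly) that subtraction only needs the intersection to be rich and that $Q_i$-invariance is required solely to make the twisted product stay in $\D(H_i,V_i)Q_i$.
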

\begin{proof}
Take $x,y \in \K(G, \mathcal L)$, and let $n$ denote the maximum of their associated integers.

By \cref{rich rmk}, for every $i \geqslant n$ there exists a $Q_i$-invariant rich set $U_i$ such that
\[
 x,y \in s_i \big( \D(H_i,U_i)Q_i \big)
\]
Since $\D(H_i,U_i)Q_i$ is a ring, we have $x+y, x \cdot y \in \D(H_i,U_i)Q_i$, and hence $x+y$ and $x \cdot y$ are both well representable. This shows that $\K(G, \mathcal L)$ is a ring.

Every element in $\Q G$ is well representable with associated integer $0$ and associated (very) rich sets $U_i = H^1(H_i;\R)$.
\end{proof}

\begin{lem}
\label{main arg}
 Let $x \in \K(G, \mathcal L) \s- \{0\}$, and let $A$ be a finite indexing set. Suppose that for every $a \in A$ we are given an element $x_a \in \K(G, \mathcal L) \s- \{0\}$ with an inverse $x_a^{-1} \in \K(G, \mathcal L)$. Let $U$ be a rich subset of
 $H^1(G;\R)$.
 If for every irrational $\phi \in H^1(G;\R)$ which lies in $U$ there exists $a \in A$ with
 \[
  \phi(x_a) = \phi(x) < \phi(x-x_a)
 \]
then $x$ is invertible in $\K(G, \mathcal L)$.
\end{lem}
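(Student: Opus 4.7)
The plan is to show $x^{-1} \in \K(G, \mathcal L)$ by exhibiting, for every sufficiently large $n$, a rich set $V_n \subseteq L_n := H^1(H_n;\R)$ with $x^{-1} \in s_n(\D(H_n, V_n) Q_n)$. Let $N$ be the maximum of the associated integers of $x$ and the $x_a, x_a^{-1}$, and fix $n \geqslant N$. By \cref{rich rmk}, choose a $Q_n$-invariant rich set $W_n \subseteq L_n$ with $x, x_a, x_a^{-1} \in s_n(\D(H_n,W_n)Q_n)$ for all $a$; write $\tilde z = s_n^{-1}(z)$ and $\tilde y_a = \tilde x - \tilde x_a$. For each $a \in A$, set
\[V_a := \{\psi \in L_n : \qval{\psi}(\tilde y_a) + \qval{\psi}(\tilde x_a^{-1}) - 2\vert\psi\vert_{Q_n} > 0\}\]
which is open by continuity of $\psi \mapsto \psi(z)$ (\cref{phi homom}), since $\qval{\psi}$ and $\vert\psi\vert_{Q_n}$ are finite extrema of such expressions. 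For every $\psi \in V_n := W_n \cap \bigcup_a V_a$, \cref{key lem} applied to the decomposition $\tilde x = \tilde x_a + \tilde y_a$ (choosing $a$ with $\psi \in V_a$) yields $x^{-1} \in s_n(\D(H_n, \psi) Q_n)$; hence $x^{-1} \in s_n(\D(H_n, V_n) Q_n)$.

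The main obstacle will be showing that $V_n$ is rich in $L_n$. For $\phi \in L_0 := H^1(G;\R)$, \cref{qval for phi} gives $\vert\phi\vert_{Q_n} = 0$ and $\qval{\phi}(\tilde z) = \phi(s_n(\tilde z))$, so the hypothesis $\phi(x_a) = \phi(x) < \phi(x - x_a)$ for irrational $\phi \in U$ becomes precisely $\phi \in V_a$. Thus $B := \bigcup_a V_a \cap L_0$, an open subset of $L_0$, contains every irrational $\phi \in U$. By \cref{same irrationals}, $B \cap U$ and $U$ share irrational characters, so $B \cap U$ is rich in $L_0$; by a routine induction on the definition of rich, any open superset (in the same ambient $L_i$) of a rich set is rich, hence $B$ is itself rich in $L_0$.

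It remains to lift richness from $L_0$ to $L_n$. I will prove the following sublemma by induction on $n$: if $V \subseteq L_n$ is open and $V \cap L_0$ is rich in $L_0$, then $V$ is rich in $L_n$. The base $n = 0$ is trivial. For the inductive step, $V$ is rich in $L_n$ iff $\overline V^\circ \cap L_{n-1}$ is rich in $L_{n-1}$; the latter is open in $L_{n-1}$ and contains $\overline V^\circ \cap L_0 \supseteq V \cap L_0$, which is rich in $L_0$ (open superset in $L_0$ of a rich set), so the inductive hypothesis applies. Applying this sublemma to $\bigcup_a V_a$ (open in $L_n$, intersecting $L_0$ in $B$) shows $\bigcup_a V_a$ is rich in $L_n$; intersecting with the rich set $W_n$ and invoking \cref{rich rmk} gives $V_n$ rich in $L_n$. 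Since this construction works for every $n \geqslant N$, the element $x^{-1}$ is well representable with associated integer $N$ and associated rich sets $(V_n)_{n \geqslant N}$, so $x^{-1} \in \K(G, \mathcal L)$ and $x$ is invertible in $\K(G, \mathcal L)$ as claimed.
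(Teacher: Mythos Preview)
Your proof is correct and follows essentially the same approach as the paper's: define the open sets $V_a$ via the inequality required by \cref{key lem}, use \cref{qval for phi} together with the hypothesis to show that $\bigcup_a V_a$ captures all irrational characters of $U$ in $L_0$, invoke \cref{same irrationals} to obtain richness in $L_0$, lift this to $L_n$ by induction, intersect with the rich set $W_n$ supplied by \cref{rich rmk}, and finally apply \cref{key lem} pointwise in $\psi$. The paper leaves the lifting step (``formally, this requires an easy induction argument, which is left to the reader'') implicit, whereas you spell it out; the paper additionally makes the set $\bigcup_a V_a$ itself $Q_n$-invariant before intersecting with $W_n$, but as your argument shows this is not needed since $Q_n$-invariance of $W_n$ alone already guarantees $\psi^{Q_n} \subseteq W_n$, which is all \cref{key lem} requires.
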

\begin{proof}
Take $j\in \N$.
Recall that $s_j \colon \D(H_j) Q_j \to \D(G)$ is an isomorphism.

For every $a \in A$ we set
\[
 V_{j,a} =  \Big\{ \psi \in H^1(H_j;\R) \, \Big\vert \, \val\psi {Q_j}\big({s_j}^{-1}( x-x_a ) \big) + \val \psi {Q_j}\big({s_j}^{-1}(x_a^{-1}) \big) >  2 \vert \psi \vert_{Q_j} \Big\}
\]
We observe that $V_{j,a}$ is open.
We define
\[V_j = \bigcup_{a \in A} V_{j,a}\]
and observe that $V_j$ is an open set as well.

We claim that
 $V_j$ contains all irrational characters of $H^1(G;\R)$ lying in $U$. Indeed, let $\phi$ be such a character. Since $\phi \in  H^1(G;\R)$, we have $\vert \phi \vert_{Q_j} = 0$ for all $j$ by \cref{qval for phi}.
 By assumption, there exists $a \in A$ such that
\[
 \val \phi {Q_j}\big({s_j}^{-1}(x-x_a)\big) = \phi(x-x_a) > \phi(x_a)
\]
(here we have used \cref{qval for phi} again).
Also,
\[\val \phi {Q_j}\big({s_j}^{-1}(x_a^{-1}) \big) =\phi(x_a^{-1}) =  -\phi(x_a)\]
These three facts immediately imply that $\phi \in V_{j,a}$, and hence prove the claim.
We conclude, using \cref{same irrationals}, that $V_j \cap H^1(G;\R)$ is rich in $H^1(G;\R)$, and therefore $V_j$ is rich in $H^1(H_j;\R)$ (formally, this requires an easy induction argument, which is left to the reader).
\smallskip

Let $V'_j$ be the intersection of the $Q_j$-orbit of $V_j$; the set $V'_j$ is rich by \cref{rich rmk}.
Let $n$ be the maximum of the associated integers of the elements $x,x_a$ and $x_a^{-1}$ for all $a \in A$.
For every $j \geqslant n$ there exist a $Q_j$-invariant rich set $W_j$ such that
\[x,x_a,x_a^{-1} \in s_j \big( \D(H_j, W_j)Q_j \big)\]
for every $a$ (this uses \cref{rich rmk}). We define $U_j = V'_j \cap W_j$. It is immediate that $U_j$ is rich for every $j$. We now claim that for every $j \geqslant n$, the element $x$ admits an inverse in $s_j \big( \D(H_j,U_j) Q_j \big)$; clearly this will imply that $x$ is invertible in $\K(G, \mathcal L)$, since each of the inverses will coincide with the inverse of $x$ in $\D(G)$.


Fix $j \geqslant n$, and let $\psi \in U_j$. Clearly  $\psi \in V_{j,a}$ for some $a$. Also,
\[{\psi}^{Q_j} = \{ \psi^q \mid q \in {Q_j}\} \subseteq W_j \cap V'_j\]
as both $W_j$ and $V'_j$ are $Q_j$-invariant. Therefore $x, x_a, x_a^{-1} \in s_j \big( \D(H_j, \psi^{Q_j}) Q_j \big)$, and we may
apply \cref{key lem}; 
we conclude that $x = x_a + (x-x_a)$ admits an inverse in $s_j \big( \D(H_j,\psi) Q_j \big)$. Since $\psi$ was arbitrary, and the various inverses coincide in $\D(G)$,
we conclude that $x$ is invertible in $s_j \big( \D(H_j,U_j) Q_j \big)$, and so $x$ is invertible in $\K(G, \mathcal L)$.
\end{proof}

\begin{prop}
\label{QG inv}
 Every $x \in \Q G \s- \{0\}$ is invertible in $\K(G, \mathcal L)$.
\end{prop}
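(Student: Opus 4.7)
The plan is to prove, by strong induction on $k = |\supp(x)|$, the following stronger statement: for every finitely generated RFRS group $G$, every chain $\mathcal L$ of subspaces of $H^1(G;\R)$, and every $x \in \Q G \s- \{0\}$ with $|\supp(x)| \leqslant k$, $x$ is invertible in $\K(G,\mathcal L)$. The base case $k=1$ is immediate: $x = \lambda g$ has inverse $\lambda^{-1} g^{-1} \in \Q G \subseteq \K(G,\mathcal L)$ by \cref{KG contains QG}.

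For the inductive step with $|\supp(x)| = k > 1$, first reduce (by right-multiplying $x$ by some $g_0^{-1}$ with $g_0 \in \supp(x)$, which is a unit in $\Q G$) to the case $1 \in S := \supp(x)$. The proof of \cref{rfrs atiyah} shows that the sequence $K_n = \ker \alpha_n$ is decreasing with $\bigcap_n K_n = \{1\}$; since $|S| > 1$, let $N \geqslant 0$ be the smallest integer with $S \not\subseteq K_N$. For $n < N$ we have $S \subseteq K_n$, and since $K_{N-1} \leqslant H_N$ by the RFRS axiom (when $N \geqslant 1$; if $N=0$ then $H_0 = G$ trivially contains $S$), we conclude $S \subseteq H_N$ and so $x \in \Q H_N$. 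Moreover, $H_N$ is itself a finitely generated RFRS group with witnessing chain $(H_j)_{j \geqslant N}$.

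Partition $S$ by cosets of $K_N$ in $H_N$: $S = \bigsqcup_{i=1}^m S_i$; since the coset of $1$ is $K_N$ itself and $S \not\subseteq K_N$, we have $m \geqslant 2$. The partial sums $x_i = \sum_{g \in S_i} \lambda_g g \in \Q H_N$ satisfy $|\supp(x_i)| < k$, so by the inductive hypothesis applied to $H_N$ with the chain $\mathcal L^{(N)} = \mathcal L \cup \{H^1(H_j;\R) : 0 \leqslant j < N\}$ of subspaces of $H^1(H_N;\R)$ (endowing each newly added subspace with the natural dense set of characters injective on the corresponding free abelianisation), each $x_i$ is invertible in $\K(H_N,\mathcal L^{(N)})$. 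Now apply \cref{main arg}, with $H_N$ playing the role of $G$, to $x$ using the index set $\{1,\ldots,m\}$, the elements $x_i$, and the rich set $U = H^1(H_N;\R)$ (the whole ambient space is rich at every inductive step, since $\overline{L_0}^\circ \cap L_{i-1} = L_{i-1}$ throughout the descent to the bottom very-rich level). For any irrational $\phi \in H^1(H_N;\R)$, $\phi$ factors through $\fab{H_N}$ and is injective there, hence takes $m$ distinct values on the $m$ cosets of $K_N$; the unique minimising index $i_*$ yields $\phi(x_{i_*}) = \phi(x) < \phi(x - x_{i_*})$, no cancellations occurring because the supports of the $x_i$ lie in disjoint cosets.

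\cref{main arg} then gives $x$ invertible in $\K(H_N,\mathcal L^{(N)})$, and \cref{KHn in KG} identifies this ring with $\K(G,\mathcal L) \cap \D(H_N)$, so $x^{-1} \in \K(G,\mathcal L)$, completing the induction. The main subtlety is that \cref{main arg} cannot be invoked directly at the level of $G$: an irrational character of $G$ factors through $\fab{G}$ and therefore fails to separate elements of $S$ that lie in a common coset of $K_0$. The RFRS hypothesis is used precisely to locate a subgroup $H_N$ along the witnessing chain at which the relevant abelianisation achieves the required separation, after which invertibility is transferred back to $G$ via \cref{KHn in KG}.
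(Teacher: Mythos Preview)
Your proof is correct and follows essentially the same approach as the paper's own proof: strong induction on $|\supp x|$, reduction to a subgroup $H_N$ along the witnessing chain at which $\alpha_N$ separates the support, decomposition $x = \sum x_i$ along $\alpha_N$-fibres, inductive invertibility of the $x_i$, and an application of \cref{main arg} followed by \cref{KHn in KG}. The paper organises the argument into two cases (whether or not $\alpha$ already separates $\supp x$ at the top level), whereas you handle both uniformly by locating the minimal $N$ with $\supp x \not\subseteq K_N$; this is a mild streamlining but not a different idea. One small remark: your phrase ``\cref{KHn in KG} identifies this ring with $\K(G,\mathcal L) \cap \D(H_N)$'' slightly overstates what the remark literally asserts, but the direction you actually need---$\K(H_N,\mathcal L^{(N)}) \subseteq \K(G,\mathcal L)$---is exactly what the paper invokes in its Case~2, and it holds because the augmented chain $\mathcal L^{(N)}$ makes the notion of richness for $(H_N,\mathcal L^{(N)})$ coincide level-by-level with that for $(G,\mathcal L)$.
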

\begin{proof}
The proof is an induction on $\kappa = \vert \supp x \vert$. By assumption, we have $\kappa \geqslant 1$. The base case $\kappa=1$ is immediate, since then $x$ is already invertible in $\Q G$.

\medskip
In the inductive step, we will assume that the result holds for all elements $y$ with $\vert \supp y \vert<\kappa$  across all finitely generated RFRS groups and all chains $\mathcal L$.
Recall that $\alpha \colon G \to \fab{G}$ is the free abelianisation map.
There are two cases to consider.

\case{1} Suppose that $\vert \alpha(\supp x) \vert \geqslant 2$.

We write
\[
 x = \hspace{-4mm} \sum_{a \in \alpha(\supp x)} \hspace{-4mm} x_a
\]
where for each $a$ we have $x_a \in \Q G$ with $\alpha(\supp x_a) = \{a\}$.

Using the inductive hypothesis we see that every $x_a$ is invertible in $\K(G, \mathcal L)$.
Now we apply \cref{main arg} with $U = H^1(G;\R)$ and $A =\alpha(\supp x)$, and where to each irrational $\phi$ we associate $a \in A$ on which $\phi\vert_A$ is minimal (note that such an $a$ is unique, since $\phi$ is irrational).

\case{2} Suppose that $\kappa >1$ but $\vert \alpha(\supp x) \vert = 1$.
In this case there exists $\gamma \in G$ and $i$ such that $\gamma x \in \Q H_{i}$ but $\gamma' x \not\in \Q H_{i+1}$ for every $\gamma' \in G$ (we are using here that $(H_j)_j$ is a residual chain). It is clear that without loss of generality we may assume that $\gamma=1$.

Let $\alpha_i \colon H_{i} \to \fab{H_{i}}$ be the free abelianisation map. If $\alpha_i(\supp x)$ is supported on a singleton, then there exists $\gamma'$ such that $\gamma' x$ is supported on $\ker \alpha_i$. But $\ker \alpha \leqslant H_{i+1}$, which contradicts our assumption on $i$. Thus
\[
 \vert\alpha_i(\supp x) \vert \geqslant 2
\]
We now apply the argument of Case 1 to $x$ thought of as an element of $\Q H_{i}$, and conclude that $x$ admits an inverse in $\K(H_i, \mathcal L')$, where $\mathcal L' = \mathcal L \cup \{H_1(H_j;\R) \mid j<i \}$. But $\K(H_i, \mathcal L')$ is a subring of $\K(G, \mathcal L)$ by \cref{KHn in KG}, and this finishes the proof.
\end{proof}

\begin{lem}
\label{sets Va}
Let $S \subseteq H^1(G;\R)$ and let $x \in \D(G,S) \s- \{0\}$ be given. There exists a very rich subset $V \subseteq H^1(G;\R)$ such that
 \[
  V = \bigsqcup_{a \in A} V_a
 \]
where $A$ is a finite indexing set, and where for every $a \in A$ there is $x_a \in \Q G \s- \{0\}$ such that for every $\phi \in V_a \cap S$ we have
\[
 \phi(x_a) = \phi(x) < \phi(x-x_a)
\]
\end{lem}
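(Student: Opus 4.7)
The plan is to take $x_a$ to be a leading term of $x$ with respect to a generic character $\phi$. Using the description $x = \Ore(s_\alpha)(p/q)$ with $p, q \in \D(K)\fab{G}$ and $q \ne 0$, where $K = \ker \alpha$, the finite sets $\supp p, \supp q \subseteq \fab{G}$ will provide the combinatorial indexing for $A$.

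First I would set, for each pair $a = (a_1, a_2) \in \supp p \times \supp q$,
\[V_a = \bigl\{\phi \in H^1(G;\R) : \phi(a_1) < \phi(a')\text{ for all } a' \in \supp p \s- \{a_1\},\ \phi(a_2) < \phi(a')\text{ for all } a' \in \supp q \s- \{a_2\}\bigr\}.\]
Each $V_a$ is open as a finite intersection of open half-spaces, and the $V_a$ are pairwise disjoint (they specify different unique minima). Every irrational $\phi$ is injective on $\fab{G}$, so it realises unique minima on the two finite sets and lies in exactly one $V_a$; hence $V = \bigsqcup_{a \in A} V_a$, with $A = \{a : V_a \ne \emptyset\}$, is very rich.

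Next I would define $x_a$. For $a = (a_1, a_2) \in A$ with $V_a \cap S \ne \emptyset$, pick any $\phi \in V_a \cap S$ and a $\phi$-compatible biordering $\leqslant$. The standard Malcev--Neumann computation in $\F_\leqslant(G) = \F_\leqslant(\D(K)\fab{G})$ shows that the $\leqslant$-minimum of $\supp i_\leqslant(x) = \supp(\id_\leqslant(p)\id_\leqslant(q)^{-1}) \subseteq \fab{G}$ is $a_1 a_2^{-1}$ with some non-zero coefficient $\lambda_a \in \D(K)$; I would then set $x_a = s_\alpha(\lambda_a \cdot a_1 a_2^{-1})$. For the remaining $a \in A$ (those with $V_a \cap S = \emptyset$), I would take $x_a = 1$, making the lemma's condition vacuous on $V_a \cap S$.

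The main obstacle is verifying that $\lambda_a$ actually lies in $\Q K$, so that $x_a \in \Q G$. This is the step where the hypothesis $x \in \D(G,S)$ enters: representability in $\novq G \phi$ gives $i_\leqslant(x) \in j_\phi(\novq G \phi)$, and since $j_\phi$ factors through $\nov{(\Q K)}{\fab{G}}\phi$, the element $i_\leqslant(x)$, viewed as a function $\fab{G} \to \D(K)$, in fact takes all its values in $\Q K$; in particular $\lambda_a \in \Q K$ and $x_a \in s_\alpha((\Q K)\fab{G}) = \Q G$, with $x_a \ne 0$. The equality $\phi(x_a) = \phi(\lambda_a) + \phi(a_1) - \phi(a_2) = \phi(x)$ (using $\phi\vert_K = 0$ and \cref{phi homom}) and the strict inequality $\phi(x - x_a) > \phi(x)$ (because $\iota_\phi(x - x_a) = \iota_\phi(x) - x_a$ corresponds via $j_\phi$ to $i_\leqslant(x)$ with its $\leqslant$-leading term removed) will then complete the proof.
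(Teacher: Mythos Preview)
Your argument is correct and mirrors the paper's proof almost exactly: the paper also writes $x$ as an Ore fraction $s_\alpha(y)s_\alpha(z)^{-1}$ with $y,z \in \D(K)\fab{G}$, takes $A = \supp y \times \supp z$, defines each $V_a$ as the open locus of characters with the prescribed strict minima on the two supports, and sets $x_a$ to be the leading monomial of $i_\leqslant(x)$ at $a_1 a_2^{-1}$, using representability exactly as you do to land in $\Q G$. The one point the paper makes explicit that you should too is that $\lambda_a$ is independent of the auxiliary choices of $\phi$ and $\leqslant$ (it is determined by $p(a_1)$ and $q(a_2)$), since the conclusion is required for \emph{every} $\phi \in V_a \cap S$, not just the one you picked to define $x_a$; once that is noted, your verification of $\phi(x_a)=\phi(x)<\phi(x-x_a)$ goes through for all such $\phi$.
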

\begin{proof}
Recall that $\alpha \colon G \to \fab{G}$ denotes the free abelianisation map; let $K$ denote its kernel, and let $s_\alpha$ denote a section.

 We have $x = s_\alpha(y) s_\alpha(z)^{-1}$ with $y,z \in \D(K) \fab{G} \s- \{0\}$ (since $x \neq 0$).
We define $A =  \supp y \times \supp z$ where the supports are subsets of $\fab{G}$,  
and for $a = (b,c)$ we declare $V_a$ to be the set of those characters in $H^1(G;R)$ which attain their minima on $\supp y$ precisely at $b$, and on $\supp z$ precisely at $c$.
It is immediate that the sets $V_a$ are open and pairwise disjoint. It is also immediate that $V = \bigsqcup_{a \in A} V_a$ is very rich, since it contains all irrational characters in $H^1(G;R)$.

Now let us fix $a = (b,c) \in A$. We write
\[
 y = y_b + y' \textrm{ and } z = z_c + z'
\]
where $y_b, y', z_c, z' \in \D(K)\fab{G}$ satisfy $\supp y_b = \{b\}$, $\supp y' = \supp y \s- \{b\}$, $\supp z_c = \{c\}$, and $\supp z' = \supp z \s- \{c\}$.
Take $\phi \in S \cap V_a$. By the definition of $\D(G,S)$, there exists a biordering $\leqslant$ on $\fab{G}$ compatible with $\phi$ such that $i_\leqslant(x) \in \nov {(\Q K)} {\fab{G}} \phi$. Now
\[
 i_\leqslant(x) = i_\leqslant(s_\alpha(y))i_\leqslant(s_\alpha(z))^{-1} = y \id_\leqslant(z)^{-1}
\]
and so
\[
 i_\leqslant(x)  z =  y
\]
where both equations hold in $\F_\leqslant(G)$.

Applying $\phi$ to both sides of the equality tells us that
 $\phi$ attains its minimum on $\supp \iota_\leqslant(x)$ at $d$ which satisfies $dc = b$. Note that this forces $d$ to be unique.
 Let
\[
 x'_a = i_\leqslant(x)(d) d \in (\Q K) \fab{G}
\]
We immediately see that $\phi(x'_a) < \phi\big( i_\leqslant(x) - x'_a\big)$. Setting $x_a = s_\alpha(x'_a) \in \Q G$, we obtain
\[
 \phi(x-x_a) = \phi\big(i_\leqslant(x-x_a)\big) = \phi\big( i_\leqslant(x) - x'_a\big) > \phi(x'_a) = \phi(x_a)
\]

Note that $\iota_\leqslant(x)(d) = y(b) z(c)^{-1}$ is independent of the choice of $\leqslant$. Therefore $x_a$ depends only on $x$ and $a \in A$, but not on $\phi$.
\end{proof}

\begin{thm}
\label{DG is KG}
Let $G$ be a finitely generated RFRS group. We have
 \[\K(G, \mathcal L) = \D(G)\]
 for every chain $\mathcal L$.
\end{thm}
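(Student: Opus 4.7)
The overall strategy is to show that $\K(G,\mathcal L)$ is a sub-skew-field of $\D(G)$ containing $\Q G$, from which $\K(G,\mathcal L)=\D(G)$ will follow by \cref{subfield of D} (noting that $G$ is torsion-free and satisfies the Atiyah conjecture by \cref{rfrs atiyah}, so $\D(G)$ is a skew-field). By \cref{KG contains QG} we already know $\K(G,\mathcal L)$ is a subring of $\D(G)$ containing $\Q G$, so the remaining task is to verify that every non-zero $x\in \K(G,\mathcal L)$ is invertible in $\K(G,\mathcal L)$. The plan is to establish this by applying \cref{main arg}, with the approximating elements $x_a$ drawn from $\Q G\setminus\{0\}$, which are invertible in $\K(G,\mathcal L)$ by \cref{QG inv}.

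Fix $x\in \K(G,\mathcal L)\setminus\{0\}$ with associated integer $n$ and rich sets $(U_i)_{i\geqslant n}$, so that $x=s_n\bigl(\sum_{q\in Q_n} x_q q\bigr)$ with $x_q\in \D(H_n,U_n)$. First I would verify that $x\in\D(G,S)$ for $S:=U_n\cap H^1(G;\R)$: for each $\phi\in H^1(G;\R)$ lying in $U_n$, every $x_q$ is representable over $\novq{H_n}{\phi}$, and the natural inclusion $\novq{H_n}{\phi}\hookrightarrow \novq G \phi$ (extending functions by zero from $H_n$ to $G$) assembles these into a representation of $x$ over $\novq G \phi$. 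Next I would invoke \cref{sets Va} with this $S$ to obtain a very rich subset $V\subseteq H^1(G;\R)$, a finite partition $V=\bigsqcup_{a\in A} V_a$, and elements $x_a\in \Q G\setminus\{0\}$ satisfying $\phi(x_a)=\phi(x)<\phi(x-x_a)$ for every $\phi\in V_a\cap S$.

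The final step is to exhibit a rich subset $U\subseteq H^1(G;\R)$ contained in $V$ such that every irrational character of $U$ lies in $S$; granted such a $U$, the approximation condition holds at every irrational $\phi\in U$, and \cref{main arg} applies to deliver the inverse of $x$ inside $\K(G,\mathcal L)$. The natural candidate is $U:=V\cap W_0$, where the auxiliary sets $W_i$ are defined by $W_n:=U_n$ and $W_i:=\overline{W_{i+1}}^\circ\cap L_i$ for $i<n$, so that $W_0$ is rich in $L_0=H^1(G;\R)$ by the recursive definition of richness, and $U=V\cap W_0$ is rich by \cref{rich rmk}. The hard part, and the main obstacle I anticipate, is ensuring that irrational characters in $U$ really do lie in $S$: although $W_0\subseteq\overline{U_n}$, membership in the closure does not a priori guarantee membership in $U_n$ itself. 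Handling this requires a delicate interplay between openness, the iterated closure operations, density of irrational characters, and \cref{same irrationals}, possibly replacing $U_n$ along the way by an enlarged but still admissible rich set in $L_n$ so that the irrationals of $U$ are captured. Once this technical point is resolved, the hypotheses of \cref{main arg} are satisfied and the proof concludes.
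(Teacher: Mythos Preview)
The obstacle you flag at the end is not a technicality to be cleaned up; it is a genuine gap that blocks this line of attack. The set $S=U_n\cap H^1(G;\R)$ can be arbitrarily small: for example, with $\mathcal L=\emptyset$, $n=1$, and $H^1(G;\R)$ a proper subspace of $H^1(H_1;\R)$, the complement $U_1=H^1(H_1;\R)\setminus H^1(G;\R)$ is open and rich (its closure is all of $H^1(H_1;\R)$), yet $U_1\cap H^1(G;\R)=\emptyset$. Nothing in the definition of well-representability prevents such a $U_n$ from being an associated rich set of $x$. In that case $S=\emptyset$, \cref{sets Va} gives no useful information, and no rich $U\subseteq H^1(G;\R)$ has its irrationals inside $S$. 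Enlarging $U_n$ is not an option, since $U_n$ is constrained by the requirement $x\in s_n(\D(H_n,U_n)Q_n)$; and iterating closure--interior from $U_n$ down to $H^1(G;\R)$ only lands you in $\overline{U_n}$, not in $U_n$ itself. In short, your approach amounts to showing that every $x\in\K(G,\mathcal L)$ already has associated integer $0$, which is essentially as strong as the theorem.

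The paper avoids this by induction on the associated integer $n$. The base case $n=0$ is exactly your argument: here $U_0$ is itself a rich subset of $H^1(G;\R)$, so \cref{sets Va} together with \cref{QG inv} feeds directly into \cref{main arg}. For $n>0$, write $x=s_n(\sum_{q\in Q_n} x_q\,q)$ and group the terms according to the image of $s_n(q)$ in $Q_1$. If all non-zero terms land in a single $Q_1$-coset, then after translating by a group element $x$ lies in $\D(H_1)$, hence in $\K(H_1,\mathcal L')$ with $\mathcal L'=\mathcal L\cup\{H^1(G;\R)\}$ and strictly smaller associated integer, so induction applies. Otherwise the pieces $x_a$ (one for each $Q_1$-coset appearing) are each invertible by the previous case, and the RFRS condition $\ker\alpha\leqslant H_1$ guarantees that an irrational $\phi\in H^1(G;\R)$ takes distinct values on distinct $H_1$-cosets, so for each such $\phi$ there is a unique $a$ with $\phi(x)=\phi(x_a)<\phi(x-x_a)$. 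Now \cref{main arg} applies with $U=H^1(G;\R)$. The essential idea you are missing is this coset-by-coset descent through the RFRS chain, which is what makes the passage from level $n$ to level $0$ possible.
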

\begin{proof}
Recall that  every sub-skew-field of $\D(G)$ containing $\Q G$ is equal to $\D(G)$ by \cref{subfield of D}.
In view of this fact, we need only show that $\K(G, \mathcal L)$ is a skew-field (since we have already seen in \cref{KG contains QG} that $\K(G, \mathcal L)$ contains $\Q G$). To this end, take $x \in \K(G, \mathcal L) \s- \{0\}$. 
Let $n$ denote the associated integer of $x$. We proceed by induction on $n$.

\smallskip
\noindent \textbf{Base case:} Suppose that $n=0$. Let $U$ denote the associated rich set of $x$ in $H^1(G;\R)$. 
We now apply \cref{sets Va} to $x \in \D(G,U)$, which produces for us a very rich set $V = \bigsqcup_{a \in A} V_a$ where $A$ is a finite set. 

Let $\phi \in H^1(G;\R)$ be an irrational character contained in $U$. \cref{sets Va} tells us that there exists $a \in A$ and $x_a \in \Q G \s- \{0\}$ such that
\[
 \phi(x_a) = \phi(x) < \phi(x-x_a)
\]
Also, since $x_a \in \Q G \s- \{0\}$, it is invertible in $\K(G, \mathcal L)$ by \cref{QG inv}. Now an application of \cref{main arg} gives us invertibility of $x$ in $\K(G, \mathcal L)$.

\smallskip
\noindent \textbf{Inductive step:} Let
\[
 x = s_n\left( \sum_{q\in Q_n} x_q q \right)
\]
with $x_q \in \D(H_n)$ for every $q \in Q_n$.

Let $B = \{ q \in Q_n \mid x_q \neq 0\}$,
and consider the map $\xi \colon B \to Q_1$ given by $\xi(q) = \beta_1 s_n(q)$.
If $\xi$ is constant on $B$, then up to multiplying $x$ by an element of $G$ we may assume that $\xi(B) = \{1\}$. This implies that $s_n(q) \in H_1$ for every $q \in B$, and so $x = s_n(\sum x_q q) \in \D(H_1)$.
Therefore $x \in \K(H_1, \mathcal L')$ (using \cref{KHn in KG}), where
\[
 \mathcal L' = \mathcal L \cup \{ H^1(G;\R) \}
\]
Treated as an element in $\K(H_1, \mathcal L')$, the element $x$ has lower associated integer, and hence is invertible in $\K(H_1, \mathcal L')$ (and therefore in $\K(G, \mathcal L)$) by induction.

Now suppose that $\xi$ is not constant on $B$.
Let $A = \{ \xi(q)) \mid q \in B \}$.
 Clearly, $A$ is a finite set. For every $a \in A$, the element
 \[x_a =  s_n \left( \sum_{\xi(q) =a} x_q q \right)\]
 is invertible in $\K(G, \mathcal L)$ by the above discussion.
 Also,
 \[
  x_a s_1(a)^{-1} = \sum_{\xi(q) = a} x_q s_n(q) s_1(a)^{-1} \in \D(H_1)
 \]
since $x_q \in \D(H_n) \leqslant \D(H_1)$ and $s_n(q) s_1(a)^{-1} \in H_1$.

Consider the restriction $\alpha \colon H_1 \to \fab{G}$. Its image $\alpha(H_1)$ is a finitely generated free-abelian group, and its kernel is $K$ (which is also the kernel of $\alpha \colon G \to \fab{G}$). Thus, \cref{DG is Ore} tells us that the isomorphism 
\[\Ore(s_\alpha) \colon \Ore(\D(G) \fab{G}) \to \D(G)\]
 restricts to an isomorphism
\[
 \Ore(\D(K) \alpha(H_1)) \to \D(H_1)
\]
Hence,
\[
 \phi(x_a) = \phi\big( x_a s_1(a)^{-1} \big) + \phi(s_1(a)) \in \phi(s_1(a)H_1) \subseteq \R
\]
for every $\phi \in H^1(G;\R)$.

Let $\phi \in H^1(G;\R)$ be irrational, and take distinct $a$ and $b$ in $A$. Since $\phi$ is injective as a map $\fab{G} \to \R$, the set of values that $\phi$ attains at the coset $s_1(a) H_1$ of $H_1$ in $G$ is disjoint from the set of values it attains at $s_1(b) H_1$ -- here we are using the fact that $H_1 \geqslant \ker \alpha$ in a crucial way.
Therefore
\[
 \phi(x_a) \neq   \phi(x_b)
\]
Hence, for each irrational $\phi$ there exists $a \in A$ such that
\[
 \phi(x) = \phi( x_a)  < \phi(x - x_a)
\]
We finish the argument by an application of \cref{main arg}.
\end{proof}

\section{The main results}

Throughout this section, all unspecified tensoring happens over $\Q G$.

\begin{dfn}[Antipodal map]
 Given a vector space $V$, we call the map $V \to V$ given by $v \mapsto -v$ the \emph{antipodal map}.
\end{dfn}

\begin{thm}
\label{main thm chain cplxs}
Let $G$ be a finitely generated RFRS group, and let $N \in \N$ be an integer.
Let $C_*$ denote a chain complex of free $\Q G$-modules such that for every $i \leqslant N$ the module $C_i$ is finitely generated, and such that $H_i(\D(G) \otimes C_\ast) = 0$.
 There exists a finite-index subgroup $H$ of $G$ and an open subset $U \subseteq H^1(H;\R)$ such that
\begin{enumerate}
 \item the closure of $U$ contains $H^1(G;\R)$;
 \item $U$ is invariant under the antipodal map;
 \item $H_i\big(  \novq H \psi \otimes_{\Q H} C_\ast \big) =0$ for every $i \leqslant N$ and every $\psi \in U$.
\end{enumerate}
\end{thm}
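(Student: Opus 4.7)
The plan is to follow the heuristic sketched in the introduction, with \cref{DG is KG} as the main engine. First, I would replace $C_\ast$ with a bounded chain complex $C'_\ast$ of finitely generated free $\Q G$-modules, concentrated in degrees $0$ through $N+1$, agreeing with $C_\ast$ in degrees $\leqslant N$. To construct $C'_{N+1}$ I pick finitely many elements of $C_{N+1}$ whose boundaries span the finite-dimensional $\D(G)$-subspace $\ker (\partial_N \otimes \D(G))$ inside $\D(G) \otimes C_N$, and take $C'_{N+1}$ to be the free $\Q G$-submodule they generate. By construction $H_i(\D(G) \otimes C'_\ast) = 0$ for $i \leqslant N$, and a short exact sequence argument (the quotient $C_\ast/C'_\ast$ is concentrated in degree $N+1$) shows that Novikov vanishing for $C'_\ast$ implies the same for $C_\ast$.

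Working over the skew-field $\D(G)$, the complex $\D(G) \otimes_{\Q G} C'_\ast$ is a bounded complex of finite-dimensional $\D(G)$-spaces with vanishing homology in degrees $\leqslant N$. Linear algebra furnishes invertible $\D(G)$-matrices $P_i$, one for each relevant degree, which after conjugation put the differentials into trivial block form (identity plus zero) in degrees $\leqslant N$. The finitely many entries of the matrices $P_i^{\pm 1}$ all lie in $\D(G) = \K(G,\emptyset)$ by \cref{DG is KG}, hence are well-representable. After taking the maximum of the associated integers and using \cref{rich rmk} to intersect the finitely many rich sets with their $Q_m$-orbits and antipodal images (both operations preserve richness, since irrationality and openness are symmetric under $\psi \mapsto -\psi$), I obtain a single rich subset $U \subseteq H^1(H_m;\R)$ that is $Q_m$-invariant and antipodally symmetric and such that every entry of every $P_i^{\pm 1}$ belongs to $s_m(\D(H_m, U) Q_m)$. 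Setting $H = H_m$, \cref{closure of rich} guarantees that the closure of $U$ contains $H^1(G;\R)$.

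To transfer the change-of-basis from $\D(G)$ to the Novikov ring, fix $\psi \in U$. By $Q_m$-invariance of $U$ we have $\psi^{Q_m} \subseteq U$, so every matrix entry in fact lies in $\D(H,\psi^{Q_m}) Q_m$, a subring of $\D(H) Q_m \cong \D(G)$. As in the discussion preceding \cref{key lem}, I identify $\novq H \psi \otimes_{\Q H} \Q G$ with the right $\Q G$-module $(\novq H \psi) Q_m$; it sits as a right $\Q G$-submodule inside $\F_\leqslant(H) Q_m$ for any $\psi$-compatible biordering of $\fab H$, and this ambient object also contains $\D(G)$. The coordinatewise map $\iota_\psi$ sends $\D(H,\psi^{Q_m}) Q_m$ into $(\novq H \psi) Q_m$, and applying it entrywise to the $P_i$ produces invertible right-multiplication operators that trivialise the Novikov differentials in degrees $\leqslant N$. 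This yields $H_i(\novq H \psi \otimes_{\Q H} C'_\ast) = 0$ for $i \leqslant N$, and hence the same vanishing for $C_\ast$.

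The delicate step, and the main obstacle, is the verification in the last paragraph: right-multiplication by an entry of $\D(H,\psi^{Q_m}) Q_m$ must genuinely preserve the right $\Q G$-submodule $(\novq H \psi) Q_m$ inside $\F_\leqslant(H) Q_m$, so that matrix multiplication is intertwined by $\iota_\psi$. This is exactly where $Q_m$-invariance of $U$ is essential: a conjugate $x^{q^{-1}}$ of a $\psi$-representable entry $x$ could otherwise fail to be $\psi$-representable, and then the coordinatewise $\iota_\psi$ would cease to respect the $\Q G$-action. Once this compatibility is established, the remainder of the argument is bookkeeping of the kind already developed in \cref{sec tgr} and in the proof of \cref{key lem}.
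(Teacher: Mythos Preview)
Your proposal is correct and follows the same strategy as the paper: truncate to a bounded complex of finitely generated free modules, diagonalise the differentials over the skew-field $\D(G)$, invoke \cref{DG is KG} to place all the finitely many matrix entries in $s_m(\D(H_m,U)Q_m)$ for a rich $Q_m$-invariant antipodally-symmetric $U$, and then transfer the change-of-basis matrices to $\novq H \psi$ for each $\psi \in U$. The paper handles your ``delicate step'' more cleanly: rather than treating $(\novq H \psi) Q_m$ as a right $\Q G$-module and checking compatibility of $\iota_\psi$ with the twisted $Q_m$-action, it expands each matrix $M_i$ over $\D(H,U)Q_m$ into a larger matrix $M'_i$ with entries in $\D(H,U)$ alone (using the free left $\D(H,U)$-module structure of $\D(H,U)Q_m$, equivalently the free $\Q H$-module structure of $C_i$), after which the entrywise ring homomorphism $\iota_\psi \colon \D(H,\psi) \to \novq H \psi$ transfers everything with no further verification required.
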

\begin{proof}
We fix a free $\Q G$-basis of every $C_i$ with $i \leqslant N$. Since we only care about homology up to degree $N$, we apply the following procedure to $C_\ast$: all modules in degrees above $N+1$ are set to $0$; since $H_N(\D(G) \otimes C_\ast) = 0$ and $C_N$ is finitely generated, there exists a finitely generated free submodule $\overline{C_{N+1}}$ of $C_{N+1}$ such that replacing $C_{N+1}$ by $\overline{C_{N+1}}$ still yields vanishing of the $N^{th}$ homology with coefficients in $\D(G)$. We replace $C_{N+1}$ by $\overline{C_{N+1}}$.
We will continue to denote the new chain complex by $C_\ast$ -- it is now a finite chain complex of finitely generated free modules, and we still have
\[
H_i(\D(G) \otimes C_\ast) = 0
\]
for every $i \leqslant N$.
The homology of the new complex $C_\ast$ with any coefficients agrees with the homology of the old complex in degrees lower than $N$; in degree $N$ this does not have to be the case, but nevertheless if some homology of the new complex vanishes in degree $N$, then it must have vanished for the old complex too.

By assumption, we have $H_i(\D(G) \otimes C_\ast) = 0$ for all $i \leqslant N$. Since $\D(G)$ is a skew-field, this implies the existence of invertible matrices $M_0, \dots, M_{N+1}$ over $\D(G)$ such that if we change the basis of $\D(G) \otimes C_i$ by $M_i$, we obtain a chain complex $C'_\ast$ which can be written as follows: every $C'_i$ 
splits as $D_i \oplus E_i$, and every differential is the identity matrix taking $D_{i+1}$ isomorphically to $E_i$, and is trivial on $E_{i+1}$.

 We have shown in \cref{DG is KG} that $\D(G) = \K(G)$. Since we are looking at finitely many matrices $M_i$ and their inverses, and each of them is finite, there are only finitely many entries appearing in these matrices. Therefore, there exists $n$ (the maximum of the associated integers) and a rich set $U \subseteq H^1(H_n;\R)$ (where $(H_i)_i$ denotes a witnessing chain of $G$) such that all the matrices $M_i$ and their inverses lie over $\D(H_n, U) Q_n$, where $Q_n = G /H_n$. We additionally assume that $U$ is $Q_n$-invariant and invariant under the antipodal map (see \cref{rich rmk}).
 For notational convenience we set $H = H_n$ and $Q = Q_n$. 
 Note that the closure of $U$ contains $H^1(G;\R)$ by \cref{closure of rich}. 


 Every entry of a matrix $M_i$ lies in $\D(H,U) Q$. The ring $\D(H, U) Q$ has the structure of a free finitely generated left $\D(H, U)$-module. Therefore the right action of $M_i$ on the free $\D(G)$-module $\D(G)\otimes C_i$ can be seen as right-multiplication by a matrix $M'_i$ with entries in $\D(H,U)$ on the free $\D(H,U)$-module $ \D(H,U) \otimes_{\Q H} C_i$.

 Take $\psi \in U$. By the very definition of $\D(H,U)$, we may replace each entry of $M'_i$ by an element of $\novq H \psi$. Applying this procedure to the inverse of the matrix $M_i$ gives a matrix over $\novq H \psi$ which is the inverse of $M'_i$; we will therefore denote it by ${M_i'}^{-1}$.

 We are now ready to conclude the proof: for every $i$ there exists a matrix $M'_i$ over $\novq H \psi$ invertible over the same ring, and such that if we change the basis of $ \novq H \psi \otimes C_\ast$ using these matrices, we obtain a chain complex $C''_\ast$ such that
 every $C''_i$ splits as $D'_i \oplus E'_i$, and every differential vanishes on $E_i'$ and takes $D'_{i+1}$ isomorphically to $E'_i$. The homology of this chain complex obviously vanishes in every degree.
\end{proof}

\begin{thm}
\label{main statement}
  Let $G$ be an infinite finitely generated group which is virtually RFRS. Then $G$ is virtually fibred, in the sense that it admits a finite-index subgroup mapping onto $\Z$ with a finitely generated kernel, \iff $\beta^{(2)}_1(G) = 0$.
\end{thm}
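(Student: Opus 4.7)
The plan is to handle the two implications separately. The ``only if'' direction is short: if a finite-index subgroup $H \leqslant G$ admits an epimorphism to $\Z$ with finitely generated kernel, then $\beta^{(2)}_1(H) = 0$ by \cref{Lueck}, and multiplicativity of $L^2$-Betti numbers under finite-index inclusions gives $\beta^{(2)}_1(G) = 0$.

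For the main direction, assume $\beta^{(2)}_1(G) = 0$. I would first replace $G$ by a finite-index RFRS subgroup $G_0$; the assumption is preserved by multiplicativity. If every term $H_i$ of some witnessing chain for $G_0$ had $\fab{H_i} = 0$, then $H_{i+1} \supseteq \ker \alpha_i = H_i$ would force $H_{i+1} = H_i$, so the chain would stabilise, contradicting $\bigcap H_i = \{1\}$ with $G_0$ infinite. Hence some $H_i$ has $\fab{H_i} \neq 0$, and replacing $G_0$ by that $H_i$ (still finite-index in $G$, still RFRS, still $L^2$-acyclic in degree one) I may assume $H^1(G_0;\R) \neq 0$.

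Next I would feed the right chain complex into \cref{main thm chain cplxs}. Take a free resolution $C_\ast$ of $\Q$ over $\Q G_0$ coming from a finite generating set, so that $C_0$ and $C_1$ are finitely generated. Since $G_0$ satisfies the Atiyah conjecture, $\D(G_0)$ is a skew-field, and the dimension interpretation of $L^2$-Betti numbers gives vanishing of both $H_0$ and $H_1$ of $\D(G_0) \otimes_{\Q G_0} C_\ast$ ($\beta^{(2)}_0(G_0) = 0$ because $G_0$ is infinite, and $\beta^{(2)}_1(G_0) = 0$ by assumption). Applying \cref{main thm chain cplxs} with $N = 1$ then produces a finite-index subgroup $H \leqslant G_0$ and an open antipodally invariant set $U \subseteq H^1(H;\R)$ whose closure contains $H^1(G_0;\R)$, such that $H_1(H; \novq H \psi) = 0$ for every $\psi \in U$.

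Finally I would extract a fibred integer character. Since $H$ is finite-index in $G_0$, a transfer argument gives $\dim H^1(H;\R) \geqslant \dim H^1(G_0;\R) > 0$, so the non-empty open set $U$ contains non-zero rational characters; antipodal invariance lets me pick such $\psi$ with $-\psi \in U$ as well. Because the Novikov ring depends only on the open ray of the character, namely $\novq H {\lambda\psi} = \novq H \psi$ for every $\lambda > 0$, I may clear denominators in $\psi$ and post-compose with the quotient of $\Z$ by the image, producing an epimorphism $\psi \colon H \twoheadrightarrow \Z$ with $H_1(H; \novq H {\pm\psi}) = 0$ still holding. Sikorav's theorem \cref{Sikorav} then places both $\pm\psi$ in $\Sigma(H)$, and \cref{BNS fibring} concludes that $\ker \psi$ is finitely generated. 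Thus $G$ is virtually fibred. The genuine difficulty in the argument is entirely absorbed by \cref{main thm chain cplxs} (and through it, by \cref{DG is KG}); granted that infrastructure, the present theorem is just the combination of Sikorav and Bieri--Neumann--Strebel with a density-plus-rescaling step.
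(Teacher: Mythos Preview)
Your proof is correct and follows essentially the same route as the paper: reduce to a finitely generated RFRS group, invoke \cref{main thm chain cplxs} with $N=1$ on a free resolution with finitely generated $C_0$ and $C_1$, and finish via Sikorav and Bieri--Neumann--Strebel. Your treatment is slightly more explicit in two places the paper glosses over --- the argument that an infinite RFRS group has non-trivial $H^1(\,\cdot\,;\R)$, and the rescaling of a rational character in $U$ to a surjection onto $\Z$ --- while the detour through a deeper term $H_i$ of the witnessing chain is unnecessary (your own argument already gives $\fab{G_0}\neq 0$ directly), but none of this affects correctness.
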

\begin{proof}
 Suppose first that $G$ is virtually fibred; let $H$ denote a finite index subgroup of $G$ which maps onto $\Z$ with a finitely generated kernel. By \cref{Lueck}, the first $L^2$-Betti number of $H$ vanishes. But \cite{Lueck2002}*{Theorem 1.35(9)} tells us that then $\beta^{(2)}_1(G) = 0$ as well.

 \smallskip
 Now let us look at the other direction. Suppose that $\beta^{(2)}_1(G) = 0$. Note that the first $L^2$-Betti number of every finite index subgroup of $G$ vanishes as well (by the same argument as above), and being virtually fibred clearly passes to finite index overgroups, and therefore we may assume that $G$ itself is RFRS, infinite and finitely generated. Since $G$ is infinite, we have $\beta_0^{(2)}(G) = 0$ by \cite{Lueck2002}*{Theorem 6.54(8b)}.

 We apply \cref{main thm chain cplxs} where $C_\ast$ is constructed as follows: take the cellular chain complex of the universal covering of some classifying space of $G$ with finite $1$-skeleton, and  tensor it with $\Q G$. We take $N =1$. We conclude the existence of a finite index subgroup $H$ and a non-empty open subset $U \subseteq H^1(H;\R)$ such that
 \[
   H_1(  \novq H \psi \otimes_{\Q H} C_\ast) = 0
 \]
for every $\psi \in U$.
Let us take $\psi \in U$ whose image is $\Z$ -- such a $\psi$ exists since $U$ is open, non-empty, and $H^1(H;\R)$ is non-trivial. Since $\Q H \otimes_{\Q H} C_\ast$ is a resolution of the trivial $\Q H$-module $\Q$, the vanishing of the above homology tells us that
\[
  H_1(H; \novq H \psi) = 0
\]
Since $U$ is invariant under the antipodal map, we also have
\[
  H_1(H; \novq H {-\psi}) = 0
\]
Now Sikorav's Theorem (\cref{Sikorav}) tells us that $\{\psi, -\psi\} \subseteq \Sigma(H)$, and so $\ker \psi$ is finitely generated by \cref{BNS fibring}. This finishes the proof.
\end{proof}

Recall that a group $G$ is said to be \emph{of type $\typeFP{2}$} \iff there exists a long exact sequence
\[
 \dots \to C_2 \to C_1 \to C_0 \to \Z
\]
of projective $\Z G$-modules $C_i$, where $\Z$ is considered to be a trivial $\Z G$-module, such that $C_2, C_1$ and $C_0$ are finitely generated. By standard homological algebra (see for example \cite{Bieri1981}*{Remark 1.1(2)}) we may in fact take the modules $C_i$ to be free for all $i$, and the modules $C_2, C_1,$ and $C_0$ to be additionally finitely generated.

Note that being of type $\typeFP{2}$ forces $G$ to be finitely generated.

A group $G$ is \emph{of cohomological dimension at most $2$} \iff there exists an exact sequence
\[
 0 \to D_2 \to D_1 \to D_0 \to \Z
\]
of projective $\Z G$-modules $D_i$, where $\Z$ is again a trivial $\Z G$-module.

Again, standard homological algebra (or \cite{Bieri1981}*{Proposition 4.1(b)}) tells us that if $G$ is both of type $\typeFP{2}$ and of cohomological dimension at most $2$, we may take $D_2$ to be finitely generated and projective, and $D_1$ and $D_0$ to be finitely generated and free.

The author is grateful to the referee for pointing out the following application.

\begin{thm}
\label{thm cohdim 2}
 Let $G$ be a non-trivial virtually RFRS group of type $\typeFP 2$ and of cohomological dimension at most $2$. If $\beta_2^{(2)}(G) = \beta_1^{(2)}(G)= 0$, then there exists a finite index subgroup $H$ of $G$ and an epimorphism $\psi \colon H \to \Z$ with kernel of type $\typeFP 2$.
\end{thm}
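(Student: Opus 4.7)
The plan is to apply \cref{main thm chain cplxs} to a length-$2$ resolution afforded by the cohomological dimension at most $2$ hypothesis, and then to invoke the higher-degree Sikorav--Bieri--Renz criterion in order to conclude that the kernel is of type $\typeFP{2}$. First I would pass to a finite-index RFRS subgroup of $G$; this preserves being $\typeFP{2}$, the upper bound on cohomological dimension, and the vanishing of $\beta_{1}^{(2)}$ and $\beta_{2}^{(2)}$, and by \cref{rfrs atiyah} gives a torsion-free group satisfying the Atiyah conjecture. Combining $\typeFP{2}$ with cohomological dimension at most $2$ yields an exact sequence $0 \to D_2 \to D_1 \to D_0 \to \Z \to 0$ of finitely generated $\Z G$-modules with $D_0, D_1$ free and $D_2$ projective; a standard Eilenberg-swindle stabilisation (adding a finitely generated projective summand to both $D_2$ and $D_1$, connected by the identity) makes $D_2$ free as well. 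Tensoring with $\Q$ over $\Z$ produces a length-$2$ free $\Q G$-resolution $C_\ast$ of the trivial module $\Q$ whose three nonzero terms are finitely generated.

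Since $G$ is infinite, $\beta_0^{(2)}(G) = 0$; together with the hypotheses this gives
\[
H_i\bigl(\D(G) \otimes_{\Q G} C_\ast\bigr) = 0 \quad \text{for } i \in \{0,1,2\}.
\]
I apply \cref{main thm chain cplxs} with $N = 2$ to obtain a finite-index subgroup $H \leqslant G$ and an antipodally invariant open subset $U \subseteq H^1(H;\R)$, with closure containing $H^1(G;\R)$, such that $H_i(H;\novq H \psi) = 0$ for every $\psi \in U$ and every $i \leq 2$ (using that $C_\ast$ is a free $\Q H$-resolution of $\Q$). By \cref{main statement} applied to $G$ itself (permissible because $\beta_{1}^{(2)}(G) = 0$), after one further finite-index reduction I may assume $H^1(G;\R) \neq 0$; hence $U$ is non-empty, and as integral characters are dense in $H^1(H;\R)$ it contains some $\psi \colon H \to \Z$, with $-\psi \in U$ by antipodal invariance. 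Thus $H_i(H;\novq H{\pm\psi}) = 0$ for $i \leq 2$.

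The final step is to deduce from these vanishings that $\ker \psi$ is of type $\typeFP{2}$. In degree $1$ this is exactly \cref{Sikorav} combined with \cref{BNS fibring}, which gives finite generation of $\ker \psi$. In degree $2$ the required statement is the Bieri--Renz theorem: for $H$ of type $\typeFP{2}$ and an epimorphism $\psi \colon H \to \Z$, the vanishing of $H_i(H;\novq H{\pm\psi})$ for $i \leq 2$ is equivalent to $\ker \psi$ being of type $\typeFP{2}$ over $\Q$. The cohomological dimension at most $2$ hypothesis, inherited by $\ker \psi$, then promotes this rational statement to the integral one: a length-$2$ projective $\Z[\ker\psi]$-resolution of $\Z$ exists by cohomological dimension, and its degree-$2$ term can be arranged finitely generated once the $\Q$-tensored counterpart is, by tracking generators across the resolution.

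The main obstacle will be this concluding step. The higher-degree Sikorav--Bieri--Renz criterion is not developed in the excerpt (only its degree-$1$ incarnation \cref{Sikorav} appears), so it must be imported from the literature on BNSR invariants; the argument for it is, however, essentially the same iterative Novikov-ring technique as in \cref{Sikorav}. The more delicate point is the upgrade from $\typeFP{2}$ over $\Q$ to over $\Z$, and this is precisely the role of the cohomological dimension at most $2$ hypothesis in the statement.
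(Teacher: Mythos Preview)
Your strategy---apply \cref{main thm chain cplxs} and then invoke a higher-degree Sikorav/Bieri--Renz criterion---is the same as the paper's.  Two issues arise, one minor and one substantive.

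The minor one: your ``Eilenberg-swindle stabilisation'' does not make $D_2$ free while keeping it finitely generated.  Adding a finitely generated projective complement works only if $D_2$ is stably free, which you have no reason to expect, and the actual Eilenberg swindle produces an infinitely generated module.  This is easily repaired: simply feed \cref{main thm chain cplxs} the free resolution coming from $\typeFP{2}$ (with $C_0,C_1,C_2$ finitely generated free), exactly as the paper does, rather than trying to modify $D_\ast$.

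The real gap is your last paragraph.  You obtain $\typeFP{2}$ over $\Q$ for $\ker\psi$ and then assert that cohomological dimension at most $2$ ``promotes this rational statement to the integral one'' by ``tracking generators across the resolution''.  Concretely you would need: if $0\to P_2\to F_1\to F_0\to\Z\to 0$ is exact with $F_0,F_1$ finitely generated free over $\Z[\ker\psi]$ and $P_2$ projective, and if $P_2\otimes\Q$ is finitely generated over $\Q[\ker\psi]$, then $P_2$ is finitely generated.  Clearing denominators in a finite generating set of $P_2\otimes\Q$ gives a finitely generated $P_2'\leqslant P_2$ with $P_2/P_2'$ a torsion abelian group, but nothing forces this quotient to vanish or even to be finitely generated; projectivity of $P_2$ does not help here.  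So this step is unjustified.

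The paper performs the $\Q$-to-$\Z$ passage at the level of Novikov homology rather than at the level of finiteness properties, and this is where the hypothesis $\mathrm{cd}\leqslant 2$ is genuinely used.  One takes the short resolution $D_\ast$ (so $D_3=0$) and observes that $H_2(H;\nov{\Z}{H}{\psi})$ is simply the kernel of $\nov{\Z}{H}{\psi}\otimes_{\Z H}\partial$ with $\partial\colon D_2\to D_1$.  Since $D_2$ is a summand of a finitely generated free module and $\nov{\Z}{H}{\psi}\hookrightarrow\novq{H}{\psi}$, the natural map $\nov{\Z}{H}{\psi}\otimes_{\Z H} D_2\to\novq{H}{\psi}\otimes_{\Z H} D_2$ is injective; as $\novq{H}{\psi}\otimes_{\Z H}\partial$ is injective (this is $H_2(H;\novq{H}{\psi})=0$ together with $D_3=0$), so is $\nov{\Z}{H}{\psi}\otimes_{\Z H}\partial$.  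Hence $H_i(H;\nov{\Z}{H}{\pm\psi})=0$ for $i\leqslant 2$, and now one applies the \emph{integral} higher Sikorav theorem (Schweitzer, \cite{Bieri2007}*{Theorem A.1}) to conclude $\ker\psi$ is of type $\typeFP{2}$ over $\Z$ directly.
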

\begin{proof}
Since $G$ is of type $\typeFP{2}$, we have an exact sequence $C_\ast$ as above.
Observe that
$H_i(\D(G) \otimes_{\Z G} C_\ast) = 0$ for all $i\leqslant 2$, since the $L^2$-Betti numbers of $G$ vanish -- note that $\beta_0^{(2)}(G) = 0$ as $G$ is non-trivial and of finite cohomological dimension, which together imply that $G$ is infinite, which suffices by \cite{Lueck2002}*{Theorem 6.54(8b)}. In fact, the higher $L^2$-Betti numbers also vanish, as can be easily seen using the resolution $D_\ast$ from above.

Note that being infinite implies that $\fab{G}$ is non-trivial.

Now, the chain complex $C'_\ast = \Q G \otimes_{\Z G} C_\ast$ satisfies the assumptions of \cref{main thm chain cplxs} with $N=2$. From the theorem we obtain a finite index subgroup $H$, and a surjective character $\psi \colon H \to \Z$ such that
\[
 H_i\big(  \novq H \psi \otimes_{\Q H} C'_\ast \big) =0 = H_i\big(  \novq H {-\psi} \otimes_{\Q H} C'_\ast \big)
\]
for $i \in \{0,1,2\}$.

Since $\psi$ is non-trivial, we have
\[
H_0\big(  H; \nov \Z H \psi \big) =0 = H_0\big(  H; \nov \Z H {-\psi} \big)
\]
(this is an easy exercise).

By \cref{Sikorav}, we have
\[
H_1\big(  H; \nov \Z H \psi \big) =0 = H_1\big(  H; \nov \Z H {-\psi} \big)
\]
We now want to compute the second homology of $H$ with these coefficients. Consider the resolution $D_\ast$ from above. We have $D_3 = 0$ and
\[
 H_2( \novq H \psi \otimes_{\Z H} D_\ast) = H_2( H; \novq H \psi) = 0
\]
This implies that $ \novq H \psi \otimes_{\Z H} \partial$ is injective, where $\partial \colon D_2 \to D_1$ denotes the differential.

The module $D_2$ is finitely generated and projective, and so we have an isomorphism of $\Z H$-modules
\[
 D_2 \oplus E \cong \Z H^n
\]
for some $n$ and some $\Z H$-module $E$. We immediately see that
\[
 \left( \nov \Z H \psi \otimes_{\Z H} D_2 \right) \oplus \left( \nov \Z H \psi \otimes_{\Z H} E \right) \cong \nov \Z H \psi \otimes_{\Z H} \Z H^n = \left( \nov \Z H \psi \right)^n
\]
and
\[
 \left( \novq H \psi \otimes_{\Z H} D_2 \right) \oplus \left( \novq H \psi \otimes_{\Z H} E \right) \cong \left( \novq H \psi \right)^n
\]
Since the natural map $\nov \Z H \psi \to \novq H \psi$ is clearly injective, the same is true for the induced map
\[
 \left( \nov \Z H \psi\right)^n \to \left( \novq H \psi \right)^n
\]
and hence for the natural map
\[
 \nov \Z H \psi \otimes_{\Z H} D_2 \to \novq H \psi \otimes_{\Z H} D_2
\]
This implies that $\nov \Z H \psi \otimes_{\Z H} \partial$ is injective, and so
\[
 H_2( H; \nov \Z H \psi) = H_2( \nov \Z H \psi \otimes_{\Z H} D'_\ast) =  0
\]
A completely analogous argument shows that
\[
 H_2( H; \nov \Z H {-\psi}) = 0
\]
also holds. We now conclude that $\ker \psi$ is of type $\typeFP{2}$ from a theorem of Pascal Schweitzer \cite{Bieri2007}*{Theorem A.1} (which should be thought of as a higher degree version of Sikorav's theorem).
\end{proof}

\section{Agol's Theorem}

In~\cite{Agol2008}, Agol proved the following theorem.

\begin{thm}[Agol's theorem]
 Let $M$ be a compact connected orientable irreducible $3$-manifold with $\chi(M) = 0$
such that $\pi_1(M)$ is RFRS. If $\phi \in H^1(M;\Z) \s- \{0\}$ is a non-fibred homology class, then there
exists a finite-sheeted cover $p \colon M' \to M$ such that $p^\ast  \phi \in H^1 (M';\Z)$ lies in the cone over
the boundary of a fibred face of $B(M')$.
\end{thm}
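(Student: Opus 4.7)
My plan is to deduce Agol's theorem from \cref{main thm chain cplxs} combined with Thurston's finiteness theorem for fibred faces of the norm ball. Set $G = \pi_1(M)$. First I would check that $\beta_1^{(2)}(G) = 0$: since $H^1(M;\Z) \ne 0$, the group $G$ is infinite, and then the irreducibility and orientability of $M$ imply that $M$ is aspherical; the vanishing of $\beta_1^{(2)}(G)$ under the assumption $\chi(M) = 0$ is a standard consequence of geometrisation together with L\"uck's computations for $3$-manifold groups \cite{Lueck2002}. I would then apply \cref{main thm chain cplxs} with $N = 1$ to the chain complex obtained by tensoring the cellular chain complex of the universal cover of $M$ with $\Q G$. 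This yields a finite-index subgroup $H \leqslant G$, corresponding to a finite cover $p \colon M' \to M$, together with an open antipodally-symmetric subset $U \subseteq H^1(H;\R)$ whose closure contains $H^1(G;\R)$, such that $H_1(H;\novq H \psi) = 0$ for every $\psi \in U$.

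By \cref{Sikorav} and the antipodal symmetry of $U$, every $\psi \in U$ satisfies $\{\psi, -\psi\} \subseteq \Sigma(H)$, and hence every integral $\psi \in U$ is a fibred class on $M'$ by \cref{BNS fibring}. Since $p^{\ast}\phi \in H^1(G;\R) \subseteq \overline{U}$ and $U$ is open, and since integral classes are dense in $H^1(H;\R)$, I can choose a sequence $(\psi_n)_n$ of integral classes in $U$ with $\psi_n \to p^{\ast}\phi$. Each $\psi_n$ lies in the open cone over some fibred face of the Thurston norm ball $B(M')$; by Thurston's theorem there are only finitely many such faces, so after passing to a subsequence all $\psi_n$ lie in the open cone over a single fibred face $F$, and consequently $p^{\ast}\phi$ lies in its closure.

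To finish, I would argue that $p^{\ast}\phi$ cannot itself lie in the open cone over $F$: if it did, then $\ker p^{\ast}\phi = H \cap \ker \phi$ would be finitely generated, but this kernel has finite index in $\ker \phi$, which would force $\ker \phi$ itself to be finitely generated, contradicting the hypothesis that $\phi$ is non-fibred. Hence $p^{\ast}\phi$ lies on the boundary of the closed fibred cone, which is precisely the cone over $\partial F$, as required. The main obstacle in this programme is the rational-approximation step: it works precisely because \cref{main thm chain cplxs} produces an open set $U$ whose \emph{closure} contains all of $H^1(G;\R)$ (rather than merely meeting it), so that rational points of $U$ can be chosen to accumulate at $p^{\ast}\phi$; every other step is a direct invocation of \cref{Sikorav}, \cref{BNS fibring}, or Thurston's polyhedrality theorem.
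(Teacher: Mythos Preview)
Your argument is correct and follows essentially the same route as the paper's proof of the uniform version (\cref{agol uniform}): apply \cref{main thm chain cplxs} to the cellular chain complex of $\widetilde{M}$, approximate $p^\ast\phi$ by classes in $U$, use Sikorav and BNS to get fibredness, and then use the finiteness of faces of $B(M')$ to trap $p^\ast\phi$ in the closure of a single fibred cone. One small point to tighten: \cref{BNS fibring} only gives that $\ker\psi_n$ is finitely generated, so to conclude that $\psi_n$ is a \emph{fibred} class in Thurston's sense you must invoke Stallings's theorem~\cite{Stallings1962} (and note that $M'$, being a finite cover of an irreducible $3$-manifold, is again irreducible)---the paper does this explicitly; with that in place your final finite-index argument ruling out $p^\ast\phi$ from the open cone is exactly what is needed to deduce the non-uniform statement from the uniform one.
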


Let us explain the notation: $\chi$ stands for the Euler characteristic. A  non-trivial character $\phi \colon \pi_1(M) \to \Z$ is \emph{fibred} \iff it is induced by a fibration of $M$ over the circle; equivalently (thanks to a theorem of Stallings~\cite{Stallings1962}), $\phi$ is fibred \iff its kernel is finitely generated. The notation $p^\ast \phi$ corresponds to simply $\phi$ in our notation, since we identify $H^1(M;\R)$ with a subspace of $H^1(M';\R)$.

The notation $B(M)$ stands for the Thurston polytope (defined in \cite{Thurston1986}). It is a compact rational polytope lying in $H^1(M;\R)$, and its key property is that certain maximal open faces of $B(M)$ are called fibred, and a primitive integral character $\phi \in H^1(M;\Z) \s- \{0\}$ is fibred \iff it lies in the cone in $H^1(M;\R)$ over a fibred face. (This property of $B(M)$ was shown by Thurston in \cite{Thurston1986}, and then reproved by the author in \cite{Kielak2018a} using methods very similar to the ones used in this article).

We will now give a new proof of Agol's theorem; in fact, we give a slightly stronger, more uniform statement, since we will show that the finite cover $M'$ can be chosen independently of $\phi$.

\begin{thm}[Uniform Agol's theorem]
\label{agol uniform}
 Let $M$ be a compact connected orientable irreducible $3$-manifold with $\chi(M) = 0$
such that $\pi_1(M)$ is RFRS. There
exists a finite-sheeted cover $p \colon M' \to M$ such that for every $\phi \in H^1(M;\Z) \s- \{0\}$ either $\phi$ is fibred or  $p^\ast \phi$ lies in the cone over
the boundary of a fibred face of $B(M')$.
\end{thm}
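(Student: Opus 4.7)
The plan is to deduce the theorem from \cref{main thm chain cplxs} applied to $G = \pi_1(M)$, combined with Thurston's characterisation of the fibred cones in $H^1(M';\R)$ as the cones over the open fibred faces of $B(M')$. First I would invoke a standard consequence of geometrisation (essentially due to Lott--L\"uck; see the discussion in \cite{Lueck2002}) asserting that a compact orientable irreducible $3$-manifold $M$ with $\chi(M) = 0$ and infinite fundamental group satisfies $\beta_i^{(2)}(G) = 0$ for every $i$. Letting $C_\ast$ be the $\Q G$-chain complex of the universal cover of $M$, which is finitely generated in every degree by compactness, we therefore have $H_i(\D(G) \otimes C_\ast) = 0$ for every $i$. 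Then \cref{main thm chain cplxs}, applied with $N = 1$, would produce a finite-index subgroup $H \leqslant G$ and an open, antipodally invariant subset $U \subseteq H^1(H;\R)$ whose closure contains $H^1(G;\R)$, such that $H_1(H;\novq H \psi) = 0$ for every $\psi \in U$. I would take $p \colon M' \to M$ to be the finite-sheeted cover with $\pi_1(M') = H$; crucially, this cover depends only on $G$, not on any single character $\phi$.

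Next I would translate the content of $U$ into geometric language. Sikorav's theorem (\cref{Sikorav}) together with the antipodal invariance of $U$ yields
\[
U \smallsetminus \{0\} \subseteq \Sigma(H) \cap -\Sigma(H).
\]
For $3$-manifold groups, Thurston's theorem \cite{Thurston1986} (reproved by the author in \cite{Kielak2018a}) identifies $\Sigma(H) \cap -\Sigma(H)$ with the union $\bigcup_F C_F$, where $C_F$ denotes the open cone from the origin on an open fibred face $F$ of $B(M')$. Hence every non-zero element of $U$ lies in some such $C_F$, and therefore every non-zero element of $\overline U$ lies in the closure of $\bigcup_F C_F$.

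Finally I would fix a non-fibred $\phi \in H^1(M;\Z) \smallsetminus \{0\}$. A short argument using Stallings' fibration theorem shows that $p^\ast \phi$ is also not fibred: its kernel $H \cap \ker \phi$ has finite index in $\ker \phi$, so finite generation of the former would force finite generation of the latter. Since $p^\ast \phi \in H^1(G;\R) \subseteq \overline U$ and $p^\ast \phi \neq 0$, it must lie in $\overline{C_F}$ for some fibred face $F$; because it itself is not fibred, it cannot belong to any open fibred cone, and so $p^\ast \phi \in \overline{C_F} \smallsetminus C_F$, which is exactly the cone over $\partial F$, as required. The main obstacle in this plan is verifying the vanishing $\beta_i^{(2)}(G) = 0$, which rests on non-trivial $3$-manifold input; once this and the Thurston-polytope identification of $\Sigma(H) \cap -\Sigma(H)$ are granted, the argument is a direct repackaging of the output of \cref{main thm chain cplxs}.
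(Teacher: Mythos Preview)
Your proposal is correct and follows essentially the same route as the paper. The only noteworthy differences are cosmetic: you apply \cref{main thm chain cplxs} with $N=1$ (which suffices, since only $H_1$ is used), whereas the paper takes $N=3$; and you invoke the identification of $\Sigma(H)\cap -\Sigma(H)$ with the union of open fibred cones directly, while the paper arrives at the same conclusion by picking a sequence $\psi_i \in U$ converging to $\phi$, observing each $\psi_i$ is fibred via Sikorav and Stallings, and then passing to a subsequence lying in a single fibred cone (using that $B(M')$ has finitely many faces). Your final step, arguing that $p^\ast\phi$ is not fibred when $\phi$ is not, is a slight repackaging of the paper's dichotomy but equally valid. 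One small point you leave implicit: the trivial case $G=\{1\}$ (where the statement is vacuous since $H^1(M;\Z)=0$) should be disposed of before asserting $G$ is infinite; RFRS groups are torsion-free, so this is immediate.
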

\begin{proof}
 We begin by observing that an irreducible $3$-manifold with infinite fundamental group is aspherical. Since the statement is vacuous when $G = \pi_1(M) = \{1\}$, and since $G$ is RFRS, we may assume that $G$ is infinite, and thus that $M$ is as\-pher\-i\-cal.
Now $\chi(M) = 0$ is equivalent to the vanishing of the $L^2$-Betti numbers by \cite{LottLueck1995}*{Theorem 0.1b}.

Since $M$ is compact, $G$ is finitely generated. We may now apply \cref{main thm chain cplxs} with $N=3$ to the cellular chain complex $C_\ast$ of the universal covering of $M$ (with $\Q$-coefficients). We are given a finite index subgroup $H \leqslant G$ and an open set $U$ in $H^1(H;\R)$ which is invariant under the antipodal map, and whose closure contains $H^1(G;\R)$. Furthermore, we have
\[
 H_1\big( \novq H \psi \otimes_{\Q H} C_\ast  \big) =0
\]
for every $\psi \in U$.

Let $M' \to M$ be the covering corresponding to $H \leqslant G$. Then $\Q H \otimes_{\Q H}C_\ast$ coincides with the cellular chain complex of the universal covering of $M'$ (as a $\Q H$-module).

Let $\phi \colon G \to \Z$ be non-trivial. Since $\phi \in H^1(G;\R)$, there exists a sequence $(\psi_i)_i$ of characters in $U$ with
\[\lim_{i \longrightarrow \infty} \psi_i =  \phi\]
Since $U$ is invariant under the antipodal map, we have $- \psi_i \in U$ for every $i$, and therefore
\[
 H_1\big( H; \novq H {\psi_i} \big) = H_1\big( H; \novq H {-\psi_i} \big) = 0
\]
for every $i$. Sikorav's Theorem (\cref{Sikorav}) tells us that \[\{\psi_i, -\psi_i \mid i \in \N \}\subseteq \Sigma(H)\]
and \cref{BNS fibring} tells us that each $\psi_i$ has a finitely generated kernel. Stallings's Theorem~\cite{Stallings1962} now implies that every $\psi_i$ is fibred (note that $M'$ is still irreducible). Thus, every $\psi_i$ lies in the cone of a fibred face of $B(M')$.

Since the polytope $B(M')$ has finitely many faces, and so finitely many fibred faces, we may pass to a subsequence and conclude that every $\psi_i$ lies in the cone of the same fibred face of $B(M')$. Hence, $\phi$ itself must lie in the closure of this cone. Now one of two possibilities occurs: $\phi$ lies in the cone of a fibred face, and so is itself fibred, or $\phi$ lies in the cone of the boundary of a fibred face of $B(M')$.
\end{proof}

\bibliography{bibliography}

\bigskip
\noindent Dawid Kielak \newline
\href{mailto:dkielak@math.uni-bielefeld.de?subject=RFRS groups and virtual fibring}{\texttt{dkielak@math.uni-bielefeld.de}} \newline
Fakult\"at f\"ur Mathematik  \newline
Universit\"at Bielefeld \newline
Postfach 100131  \newline
D-33501 Bielefeld \newline
Germany \newline

\end{document}